\theoremstyle{plain}
\newtheorem{thm}{\textbf{Theorem}}[section]
\newtheorem{lem}[thm]{\textbf{Lemma}}
\newtheorem{cor}[thm]{\textbf{Corollary}}
\newtheorem{pro}[thm]{\textbf{Proposition}}
\newtheorem*{kt}{\textbf{Kneser's Theorem}}
\newtheorem*{kemp}{\textbf{Kemperman Structure Theorem (KST)}}
\newcommand{\Sum}[2]{\underset{#1}{\overset{#2}{\sum}}}
\newcommand{\Z}{\mathbb{Z}}
\newcommand{\db}{d^{\subseteq}}
\newcommand{\be}{\begin{equation}}
\newcommand{\ee}{\end{equation}}
\newcommand{\Summ}[1]{\underset{#1}{\sum}}
\begin{document}

\title{A Step Beyond Kemperman's Structure Theorem}
\author{David J. Grynkiewicz\thanks{Universitat Polit\`{e}cnica de Catalunya,
Barcelona}
\thanks{This Research was supported in part by the
National Science Foundation, as a Math and Physical Sciences
Distinguished International Postdoctoral Research Fellow, under
grant DMS-0502193.}}

\maketitle

\section{Introduction}
\indent\indent Let $G$ be an abelian group, and let $A$ and $B$ be
nonempty subsets of $G$. Their sumset is the set of all pairwise
sums, i.e., $A+B=\{a+b\mid a\in A,\,b\in B\}$. For a set
$\mathcal{S}$ of subsets of $G$, define
$$\db(A,\mathcal{S})=\underset{B\in \mathcal{S}}{\min}\{
\db(A,B)\},$$ where $\db(A,B)=|B\setminus A|$, if $A\subseteq B$,
and $\db(A,B)=\infty$ otherwise. Hence $\db(A,\mathcal{S})$ measures
how far away as a subset the set $A$ is from the sets $B\in
\mathcal{S}$.

It is the central problem of inverse additive theory to describe the
structure of those pairs of subsets $A$ and $B$ with $|A+B|$ small.
Such descriptions often prove useful to other related areas of
mathematics---a notable example being the use of Freiman's Theorem
\cite{Freiman-Theorem-by-Freiman} \cite{natbook}, describing
$A\subseteq \Z$ with $|A+A|<c|A|$, to give a more quantitative proof
of Szemeredi's Theorem \cite{szemeredi} concerning the existence of
(4-term) arithmetic progressions in a subset of positive upper
density \cite{Gowers-szemerdiproof}.

One of the classical results of inverse additive theory was the
complete recursive description given by Kemperman \cite{kst} of the
`critical pairs' in an abelian group, i.e., those finite, nonempty
subsets $A$ and $B$ such that $|A+B|<|A|+|B|$. Among other
applications---including results in graph theory
\cite{ham-kst-vosper-graph-paper} and zero-sum additive theory
\cite{EGZ-hyp}---Kemperman's Structure Theorem (KST), whose
statement we delay until later, yields the descriptions of those
subsets of a locally compact abelian group whose Haar measure of the
sumset fails to satisfy the triangle inequality \cite{kst}
\cite{kneser-analytic-localcompact}. Other applications may also be
found in \cite{serro-smallsumset-survery} \cite{levnewkemp}.

Unfortunately, KST has not perhaps been appreciated or utilized to
its full potential, in part due to the cloud of confusion and
misunderstanding stemming from the perceived complexity of the
theorem's statement. In fact, several papers have been published
with such goals as the simplification of the conclusion of KST
\cite{lev}, the creation of alternative methods for dealing with
critical pairs \cite{ham2-recursive-descrip}
\cite{ham3-vosper-prop}, and the clarification of the use of KST in
practice \cite{Quasi-periodic-interpret-KST} \cite{levnewkemp}.

Concerning the structure of $A$ and $B$ when $A+B$ is small, few
precise results besides KST are known for an arbitrary abelian
group. In the case $G=\Z$ with $|A|\geq |B|$, then (currently, a few
technical restrictions are also needed)
\begin{equation}\label{intro_chat-1}|A+B|=|A|+|B|+r\leq
|A|+2|B|-4+\epsilon\end{equation} implies that
$\db(A,\mathcal{AP}_d),\,\db(B,\mathcal{AP}_d)\leq r+1$ for some
$d$, where $\mathcal{AP}_d$ is the set of arithmetic progressions
with difference $d$, and $\epsilon$ equals $0$ or $1$, depending on
a structural condition between $A$  and $B$ \cite{natbook}. Thus $A$
and $B$ must be large subsets of arithmetic progressions with the
same difference.

In the case $G=\Z/p\Z$, where $p$ is prime, then the critical
pairs---with $|A|\geq |B|>1$ and $|A+B|\leq p-2$ (to avoid three
very special degenerate examples)---consist of arithmetic
progressions with the same difference \cite{Vosper-thm}
\cite{vosper-addend}. Under some additional restrictions on the
cardinality of $A$, and assuming $A=B$, then a result of Freiman
shows $|A+A|\leq 2|A|+r\leq 2.4|A|$ implies that
$\db(A,\mathcal{AP}_d)\leq r+1$, for some $d$
\cite{Freiman-vosper-I} \cite{Freiman-vosper-II} \cite{natbook}; in
other words, the above result from $\Z$ holds for $A=B$, by imposing
some moderate conditions, in $\Z/p\Z$ as well. The same result is
also known for $\Z/p\Z$ under the more general assumption of
(\ref{intro_chat-1}), provided extremely severe conditions are
imposed on the cardinalities of $A$ and $B$
\cite{Rectification-paper}. The extent to which the result holds in
$\Z/p\Z$ without unnecessary assumptions on the cardinalities is
still quite open. Little is known beyond the case $|A+B|=|A|+|B|$,
for which Hamidoune and R{\o}dseth established
$\db(A,\mathcal{AP}_d),\,\db(B,\mathcal{AP}_d)\leq 1$, with only the
assumption $|A+B|\leq p-4$ and the removal of $\epsilon$ from
(\ref{intro_chat-1}) \cite{ham-rodseth}; and the case
$|A+B|=|A|+|B|+1$, for which
$\db(A,\mathcal{AP}_d),\,\db(B,\mathcal{AP}_d)\leq 2$ was shown by
Hamidoune, Serra, and Zemor, under similar assumptions with $p>51$
\cite{vosp+2}. Concerning more general abelian groups, for
$A\subseteq \Z/mZ$ with $|A+A|<2.04 |A|$, Deshouillers and Freiman
obtained a rough description of $A$ involving computed large
constants \cite{step-beyond-kneser}.

In this paper, we move one step beyond KST by completing the
description of all subsets $A$ and $B$ that exactly achieve (rather
than fail to achieve) the triangle inequality, namely for which
$|A+B|=|A|+|B|$. Our main result is Theorem \ref{KST_Step_Beyond}
(whose statement we also delay until further notation and concepts
have been developed), which shows that with a few noted
exceptions---all but one in the same vein as the original recursive
description of KST---then such $A$ and $B$ must be large subsets of
a critical pair; more specifically, there must exist $A'\supseteq A$
and $B'\supseteq B$ such that $|A'+B'|=|A'|+|B'|-1$, and which
contain $A$ and $B$ each with at most one hole, i.e., $|A'\setminus
A|\leq 1$ and $|B'\setminus B|\leq 1$. Thus in the case $G=\Z/p\Z$,
with $p$ prime, Theorem \ref{KST_Step_Beyond} generalizes the prime
case completed by Hamidoune and R{\o}dseth, and is the corresponding
composite extension of KST. Theorem 4.1 and KST will also yield
necessary and sufficient conditions for $|A+B|=|A|+|B|$.

We should remark that Hamidoune, Serra and Zemor very recently
established a particular case of Theorem 4.1, under a series of
added assumptions, including that $\gcd(|G|,6)=1$, that $A$ be a
generating subset, that the order of every element of $A\setminus 0$
be at least $|A|+1$, and that a few smaller technical assumptions
also hold \cite{ham-ser-chowla}---the hypotheses needed for their
result, particularly the assumption on the order of elements,
parallel other first-attempt generalizations of additive results,
from the prime order case to the more general abelian group setting
(see \cite{chowla} \cite{EHC-extremal-cases} \cite{EHC-chowla-karol}
for other such examples).


\section{Preliminaries}

\indent \indent We will make heavy use of the interpretation of KST
given in \cite{Quasi-periodic-interpret-KST} (or in
\cite{PhD-Dissertation}, where the explanations are slightly
extended, including the expansion of a minor omission in comment
(c.12) of \cite{Quasi-periodic-interpret-KST}), some of which may
also be found in \cite{levnewkemp}. First we begin by describing
many of the important definitions and notation that we will use.

Let $G$ be an abelian group. A subset $A\subseteq G$ is
\emph{$H_a$-periodic} if $A$ is a union of $H_a$-cosets, with $H_a$
a subgroup (referred to as the \emph{period}). Note that every set
is $H_a$-periodic with $H_a$ the trivial group. If $A$ is
$H_a$-periodic with $H_a$ a nontrivial subgroup, then $A$ is
\emph{periodic}, and otherwise $A$ is \emph{aperiodic}. Note that
$A$ being $H_a$-periodic is equivalent to $A+H_a=A$. Hence if $A$ is
$H_a$-periodic, then so is $A+B$. An $H_a$-hole in $A$ is an element
of $(A+H_a)\setminus A$, and when clear, $H_a$ will be dropped from
the notation. A \emph{punctured periodic set}, is a set $A$ such
that $A\cup\{\gamma\}$ is periodic for some $\gamma\notin A$, i.e.,
$A$ contains exactly one $H_a$-hole for some nontrivial $H_a$. We
remark that a punctured periodic set cannot be periodic (as for
instance shown in \cite{Quasi-periodic-interpret-KST}
\cite{PhD-Dissertation}). We use $\phi_a:G\rightarrow G/H_a$ to
denote the natural homomorphism. Note that if $A$ is maximally
$H_a$-periodic (meaning $H_a$ is the maximal subgroup for which $A$
is $H_a$-periodic, sometimes called the stabilizer), then
$\phi_a(A)$ is aperiodic. One of the foundational results of
additive theory is the following result of Kneser \cite{kt}
\cite{kt-asymptotic} \cite{kst} \cite{natbook}.
\begin{kt}
Let $G$ be an abelian group, and let $A_1,A_2,\ldots,A_n$ be a
collection of finite, nonempty subsets of $G$. If
$\underset{i=1}{\overset{n}{\sum }}A_i$ is maximally $H_a$-periodic,
then
$$\left|\Sum{i=1}{n} \phi_a(A_i)\right|\geq \Sum{i=1}{n}
|\phi_a(A_i)|-n+1.$$
\end{kt}

Note that if $A+B$ is maximally $H_a$-periodic and
$\rho=|A+H_a|-|A|+|B+H_a|-|B|=\db(A,A+H_a)+\db(B,B+H_a)$ is the
number of holes in $A$ and $B$, then Kneser's Theorem implies (by
multiplying all terms by $|H_a|$) that $|A+B|\geq
|A|+|B|-|H_a|+\rho$. Consequently, if either $A$ or $B$ contains a
unique element from some $H_a$-coset, then $|A+B|\geq |A|+|B|-1$.
Also, if $|A+B|\leq |A|+|B|-1$, then equality holds in the bound
from Kneser's Theorem (else $|A+B|\geq
|H_a|(|\phi_a(A)|+|\phi_a(B)|)\geq |A|+|B|$).

Given $a_i\in A$ and a subgroup $H_a$, we use $A_{a_i,H_a}$ to
denote $(a_i+H_a)\cap A$, with the $H_a$ dropped from the notation
when clear. If $A_{a_i}\neq a_i+H_a$, then $A_{a_i}$ is a
\emph{partially filled $H_a$-coset}. An \emph{$H_a$-decomposition}
of $A$ is a partition $A_{a_1}\cup\ldots\cup A_{a_l}$ of $A$ with
$a_i\in A$. The compliment of $A$ is denoted $\overline{A}$, and we
use $\langle A\rangle$ to denote the subgroup generated by $A$,
which, when $0\in A$, is the smallest subgroup $H_a$ such that
$|\phi_a(A)|=1$.

A \emph{quasi-periodic decomposition} of $A$ with
\emph{quasi-period} $H_a$, where $H_a$ is a nontrivial subgroup, is
a partition $A_1\cup A_0$ of $A$ into two disjoint (each possibly
empty) subsets such that $A_1$ is $H_a$-periodic or empty, and $A_0$
is a subset of an $H_a$-coset. Note every set has a quasi-periodic
decomposition with $H_a=G$ and $A_1=\emptyset$. A set $A\subseteq G$
is \emph{quasi-periodic} if $A$ has a quasi-periodic decomposition
$A=A_1\cup A_0$ with $A_1$ nonempty. Given a quasi-periodic
decomposition $A_1\cup A_0$ with quasi-period $H_a$, then $A_1$ is
the \emph{periodic part} of the decomposition, and $A_0$ is the
\emph{aperiodic part} (although it may be periodic if $A$ is
periodic). Such a decomposition is \emph{reduced} if $A_0$ is not
quasi-periodic. Note that if $A$ is finite and has a quasi-periodic
decomposition $A_1\cup A_0$ with quasi-period $H$, then $A$ has a
reduced quasi-periodic decomposition $A'_1\cup A'_0$ with
quasi-period $H'\leq H$ and $A'_0\subseteq A_0$.

An \emph{arithmetic progression} with difference $d\in G\setminus 0$
and length $l\in\mathbb{N}$ is a set of the form
$\{a_0,a_0+d,\ldots,a_0+(l-1)d\}$, where $a_0\in G$. The terms $a_0$
and $a_0+(l-1)d$ are the \emph{end terms} in the progression, with
$a_0$ the first term and $a_0+(l-1)d$ the last term. Note that an
arithmetic progression with difference $d$ is also an arithmetic
difference with difference $-d$, with the first and last terms
interchanged. A \emph{$d$-component} of a set $A$ is a maximal
arithmetic progression with difference $d$ contained in $A$. We use
$c_d(A)$ to denote the number of aperiodic $d$-components of $A$.
Note that if a $d$-component is periodic, then it must be a $\langle
d\rangle$-coset. Hence $c_d(A)=|A+\{0,d\}|-|A|$. A
\emph{quasi-progression} with difference $d$ is a set $P$ with a
quasi-periodic decomposition $P=P_1\cup P_0$ with quasi-period
$\langle d\rangle$, such that $P_0$ is an arithmetic progression
with difference $d$. We use $l_d(A)$ to denote the cardinality of
the minimal quasi-progression $P$ with difference $d$ that contains
$A$, and $h_d(A)=l_d(A)-|A|=|P\setminus A|$ counts the number of
holes in $A$ with respect to such a minimal quasi-progression $P$.

Assuming $0\in A\cap B$, for $i\geq 0$ we define the set $N_i(A,B)$
by
\begin{itemize}
    \item $N_0(A,B)=A,$
    \item $N_i(A,B)=(A+iB)\setminus (A+(i-1)B) \hbox{ for }
i\geq 1,$
  \end{itemize} where $0B=\{0\}$ and $iB=\underset{i}{\underbrace{B+\ldots+B}}$
for $i\geq 1$. If the sets $A$ and $B$ are clear, they will often be
dropped from the notation. For $x\in G$, let $$\nu_x(A,B)=(x-A)\cap
B.$$ Hence $|\nu_x(A,B)|=|\nu_x(B,A)|$ is the number of
representations of $x=a+b$ with $a\in A$ and $b\in B$. For
$U\subseteq B$, $i\geq 1$, and $N_i$ nonempty, we use $N^U_i$ to
denote the set of all elements $x\in N_i$ such that
$\nu_x(A+(i-1)B,B)=U$, and we define $N^{\leq
U}_i=\underset{V\subseteq U}{\bigcup}N_i^V.$ Hence
$A+(i-1)B+(B\setminus U)=(A+iB)\setminus N_i^{\leq U}$. In
particular, $|N_1^b(A,B)|$ is the number of $a\in A$ with $a+b$ a
unique expression element in $A+B$. The sets $N_i$ were first
introduced in \cite{2-component-prime-case-oriol-zemor} in
connection with small sumsets in $\Z/p\Z$, with $p$ prime, and have
since shown themselves to be quite useful \cite{ham-ser-chowla}.

Let $\mathcal{P}$ be the set of periodic subsets of $G$, let
$\mathcal{QP}$ be the set of quasi-periodic subsets of $G$, let
$\mathcal{QP}_H$ be the set of quasi-periodic subsets of $G$ with
quasi-period $H$, let $\mathcal{P}_H$ be the set of $H$-periodic
subsets of $G$, let $\mathcal{AP}$ be the set of arithmetic
progressions, and let $\mathcal{AP}_d$ be the set of arithmetic
progressions with difference $d$. Note that $\mathcal{P}\subseteq
\mathcal{QP}$.

We will need the following basic proposition \cite{natbook}
\cite{kst}. Note that if $G$ is finite and $|A|+|B|\geq |G|+r$, then
Proposition \ref{mult_result} implies $|\nu_g(A,B)|\geq r$ for every
$g\in G$.

\begin{pro}\label{mult_result} Let $A$
and $B$ be nonempty, finite subsets of an abelian group $G$, and let
$r\in \Z$.

(i) If $G$ is finite, and $|A|+|B|\geq |G|+1$, then $A+B=G$.

(ii) If $|A+B|<|A|+|B|-r$, then $|\nu_g(A,B)|> r$ for every $g\in
A+B$.
\end{pro}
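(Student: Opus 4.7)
My plan is to handle (i) by a direct contradiction argument and to deduce (ii) from Kneser's Theorem (invoked in the preliminaries) paired with a complementary lower bound on the representation count.

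For (i), suppose $g\in G\setminus(A+B)$. Then for every $a\in A$ we have $g-a\notin B$, so $g-A$ and $B$ are disjoint subsets of $G$, yielding $|A|+|B|=|g-A|+|B|\leq|G|$ and contradicting $|A|+|B|\geq|G|+1$.

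For (ii), I would pass to the equivalent contrapositive: for every $g\in A+B$, $|A+B|\geq|A|+|B|-\nu_g(A,B)$. Fix such a $g$, let $H$ be the stabilizer of $A+B$, and set $\rho=(|A+H|-|A|)+(|B+H|-|B|)$, the total number of $H$-holes in $A$ and $B$. Kneser's Theorem applied with the maximal period $H$ yields $|A+B|\geq|A|+|B|+\rho-|H|$. Thus it suffices to establish the complementary representation bound $\nu_g(A,B)\geq|H|-\rho$, since adding the two inequalities produces $|A+B|+\nu_g\geq|A|+|B|$, which is the desired conclusion.

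To prove the representation bound, I would pick any representation $g=a+b$ with $a\in A$, $b\in B$, and set $C=a+H$, $D=b+H$, so that $C+D=g+H$. Writing $h_C=|H|-|A\cap C|$ and $h_D=|H|-|B\cap D|$ for the hole counts of $A$ and $B$ in these two cosets, both $A\cap C$ and $g-(B\cap D)$ lie inside the coset $C$ of size $|H|$, so the standard intersection bound within a set of size $|H|$ gives
\begin{equation*}
\nu_g(A,B)\geq|(A\cap C)\cap(g-(B\cap D))|\geq|A\cap C|+|B\cap D|-|H|=|H|-h_C-h_D.
\end{equation*}
Since $h_C$ and $h_D$ are individual summands of $\rho_A=\sum_{C'}h_{C'}$ and $\rho_B=\sum_{D'}h_{D'}$ respectively, we get $h_C+h_D\leq\rho_A+\rho_B=\rho$, yielding $\nu_g\geq|H|-\rho$ (and when $|H|-\rho<0$ the bound is trivial from $\nu_g\geq1$, since $g\in A+B$).

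The main obstacle is this last step of coset bookkeeping: the naive impulse is to sum the intersection contributions over all valid coset pairs summing to $g+H$, but doing so causes one to overcount the holes and lose the needed bound. The correct move is to use only the single coset pair $(C,D)$ coming from one chosen representation $g=a+b$, so that the combined hole count $h_C+h_D$ is dominated by the global $\rho$ rather than a multiple of it.
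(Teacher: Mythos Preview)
The paper does not supply its own proof of this proposition; it is simply cited as a basic fact from \cite{natbook} and \cite{kst}. Your argument is correct in both parts. For (i) you give the standard disjointness/pigeonhole argument. For (ii), deriving the bound from Kneser's Theorem is a legitimate route: the paper records the consequence $|A+B|\geq |A|+|B|-|H|+\rho$ immediately after stating Kneser's Theorem, and your single-coset-pair estimate $\nu_g(A,B)\geq |H|-h_C-h_D\geq |H|-\rho$ is precisely the complementary inequality needed to conclude $|A+B|\geq |A|+|B|-\nu_g(A,B)$.

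Two small remarks. First, what you call the ``contrapositive'' is really an equivalent \emph{reformulation} of (ii) (take $r=\nu_g(A,B)$ in one direction and any $r$ with $|A+B|<|A|+|B|-r$ in the other); the logic is sound, only the label is off. Second, the classical sources you would find behind the citations (Kemperman's 1960 paper, Nathanson's book) prove (ii) by a direct $e$-transform induction rather than by invoking Kneser, so your approach differs from theirs; but since Kneser is already available in the paper's preliminaries, your derivation is both valid and arguably cleaner in this context.
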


We can now give the statement of KST \cite{kst}.

\begin{kemp} \label{KST} Let $A$ and $B$ be finite, nonempty subsets of an
abelian group $G$. Then $|A+B|=|A|+|B|-1,$ and, moreover, if $A+B$
is periodic then $|\nu_c(A,B)|=1$ for some $c$, if and only if there
exist quasi-periodic decompositions $A=A_1\cup A_0$ and $B=B_1\cup
B_0$ with common
quasi-period $H_a$, and $A_0$ and $B_0$ nonempty, such that:\\
$(i)$ $|\nu_c(\phi_a(A),\phi_a(B))|=1$, where  $c=
\phi_a(A_0)+\phi_a(B_0)$ \\
$(ii)$ $|\phi_a(A)+\phi_a(B)|=|\phi_a(A)|+|\phi_a(B)|-1$, \\
$(iii)$ $|N_1^b(A,B)|=|N_1^a(B,A)|=0$ for all $a\in A_1$ and $b\in
B_1$, and\\
$(iv)$ the pair $(A_0,B_0)$ is of one of the following types $($each
of which imply $|A_0+B_0|=|A_0|+|B_0|-1)$:

$(I)$ $|A_0|=1$ or $|B_0|=1$;

$(II)$ $A_0$ and $B_0$ are arithmetic progressions with common
difference $d$, where the order of $d$ is at least $|A_0|+|B_0|-1$,
and $|A_0|\geq 2$, $|B_0|\geq 2$ $($hence, $A_0+B_0$ is an
arithmetic progression with difference $d$, while
$|\nu_{c}(A_0,B_0)|=1$ for exactly two $c\in A_0+B_0)$;

$(III)$ $|A_0|+|B_0|=|H_a|+1$, and precisely one element $g_0$
satisfies $|\nu_{g_0}(A_0,B_0)|=1$ $($hence, $B_0$ has the form
$B_0=(g_0-((g_1+H_a)\cap\overline{A_0}))\cup\{g_0-g_1\}$, where
$g_1\in A_0)$;

$(IV)$ $A_0$ is aperiodic, $B_0$ is of the form
$B_0=g_0-((g_1+H_a)\cap\overline{A_0})$, with $g_1\in A_0$ $($hence,
$A_0+B_0=(g_0+H_a)\setminus g_0)$, and $|\nu_{c}(A_0,B_0)|\neq 1$
for all $c$.
\end{kemp}

The condition (iii) was not shown in Kemperman's original paper, but
can be derived from KST as shown in
\cite{Quasi-periodic-interpret-KST} \cite{PhD-Dissertation}.
Conditions (i) and (ii) imply that the pair $(\phi_a(A),\phi_a(B))$
satisfies the hypothesis of KST. Hence repeated application of KST
modulo the quasi-period yields a complete recursive description of
the pair $(A,B)$. This gives rise to a chain of subgroups
$$0=H_{a_0}<H_a=H_{a_1} <H_{a_2}<\ldots <H_{a_n}=G,$$ where
$H_{a_{i}}/H_{a_{i-1}}$ is the quasi-period given by KST after $i$
iterations. Conditions (i) and (iii) ensure `proper alignment'
during this recursive process, namely that $\phi_a(A_0)$ and
$\phi_a(B_0)$ will always be contained in the aperiodic part of the
mod $H_a$ quasi-periodic decomposition given by KST. Consequently,
KST induces partitions (allowing empty parts) $A=A_n\cup
A_{n-1}\cup\ldots\cup A_1\cup A_0$ and $B=B_n\cup
B_{n-1}\cup\ldots\cup B_1\cup B_0$, such that $A_n=B_n=\emptyset$,
$A_0$ and $B_0$ are nonempty, and
$$\phi_{a_{j-1}}(A)=\phi_{a_{j-1}}\left(A_n\cup \ldots \cup A_j\right)\bigcup
\phi_{a_{j-1}}\left(A_{j-1}\cup \ldots \cup A_0\right),$$ and
$$\phi_{a_{j-1}}(B)=\phi_{a_{j-1}}\left(B_n\cup \ldots \cup B_j\right)\bigcup
\phi_{a_{j-1}}\left(B_{j-1}\cup \ldots \cup B_0\right)$$ are the
quasi-periodic decompositions with quasi-period
$H_{a_j}/H_{a_{j-1}}$ given by KST after $j$ applications, while
$$A=\left(A_n\cup \ldots \cup A_j\right)\bigcup \left(A_{j-1}\cup
\ldots \cup A_0\right),$$ and $$B=\left(B_n\cup \ldots \cup
B_j\right)\bigcup \left(B_{j-1}\cup \ldots \cup B_0\right)$$ are
also quasi-period decompositions of $A$ and $B$ with common
quasi-period $H_{a_j}$.

At first it might appear that the critical pairs with $A+B$ periodic
with maximal period $H_a$, including the cases when $|A+B|<|A|+|B|$,
are not fully covered by KST. However these cases are easily reduced
to the cases covered by KST. This is because in view of Kneser's
Theorem it follows that $|\phi_a(A+B)|=|\phi_a(A)|+|\phi_a(B)|-1$,
with $\phi_a(A+B)$ aperiodic, and $A+B$ containing exactly
$|A+B|+|H_a|-|A|-|B|$ holes. Thus KST is used to describe the pair
$(\phi_a(A),\phi_a(B))$, and then $A$ and $B$ are obtained from
$A+H_a$ and $B+H_a$ by deleting $|A+B|+|H_a|-|A|-|B|\leq |H_a|-1$
elements. In view of Proposition \ref{mult_result}, these deleted
elements can be placed anywhere in the sets $H_a+A$ and $H_a+B$,
with the resulting sets being critical. Thus there is little to say
about the location of these holes.

It is natural to wonder if a pair $(A,B)$ could have more than one
quasi-periodic decomposition that satisfies KST, in other words, how
unique is the representation given by KST. This question is
addressed in \cite{Quasi-periodic-interpret-KST}
\cite{PhD-Dissertation}. We provide a short summary. The type of a
critical pair $(A,B)$ is unique, and not dependent on the choice of
quasi-periodic decompositions that satisfy KST. If the pair $(A,B)$
has type (II), then there is a unique pair of quasi-periodic
decompositions that satisfy KST. The same is true (in view of the
added condition (iii)) for type (I). In both cases, the quasi-period
$H_a$ may not be unique, but can always be taken to be maximal,
subject to the conditions of KST. There is slightly more freedom for
types (III) and (IV), but in both cases there is a natural canonical
choice. For type (III), the quasi-period may always be taken to be
the maximal period of $A+B$. This ensures that type (III) will not
occur twice in a row when recursively iterating KST. Type (IV) can
only occur in the first iteration of KST (since a type (IV) pair has
no unique expression element), and implies that $A+B$ is a punctured
$H_a$-periodic set with $|H_a|\geq 6$. These conditions imply that
there is a unique $\gamma\in \overline{A+B}$ such that $A+B\cup
\{\gamma\}$ is periodic, and $H_a$ may always be taken to be the
maximal period of $A+B\cup \{\gamma\}$. This ensures that type (III)
not follow type (IV) when iterating KST. Hence for all types the
quasi-period $H_a$ given by KST may always be taken to be maximal,
and the resulting quasi-period decompositions that satisfy KST will
be referred to as the \emph{Kemperman decompositions}.

Regarding the sequence of possible types, there is some restriction
on when type (I) can occur twice in a row, assuming $H_a$ chosen
maximally. Namely, if $A=A_1\cup A_0$ and $B=B_1\cup B_0$ are the
Kemperman decompositions of $A$ and $B$ with common quasi-period
$H_a$, and if $\phi_a(A)=\phi_a(A'_1)\cup \phi_a(A'_0)$ and
$\phi_a(B)=\phi_a(B'_1)\cup \phi_a(B'_0)$ are the Kemperman
decompositions of $\phi_a(A)$ and $\phi_a(B)$, then we cannot have
both $(A,B)$ of type (I) with $|A_0|=1$, and also
$(\phi_a(A),\phi_a(B))$ of type (I) with $|\phi_a(A'_0)|=1$---this
is the main step in the proof of Proposition 2.2 in
\cite{Quasi-periodic-interpret-KST}, and likewise for Proposition
5.3 in \cite{PhD-Dissertation}. Finally, it should also be observed
that if $A$ is not quasi-periodic and $\langle -\alpha+A\rangle=G$,
with $\alpha\in A$, then $H_a=G$.

We will also need the following two simple propositions that, like
the proofs given here, are minor variations on two results from
\cite{2-component-prime-case-oriol-zemor}.

\begin{pro}\label{surjectivity_transfer}
Let $G$ be an abelian group, let $X,\,Y\subseteq G$ be finite and
nonempty with $0\in X\cap Y$, and let $i\geq 1$. If
$|\nu_z(X,Y)|\geq t$ for every $z\in X+Y$, then
$|\nu_z(X+(i-1)Y,Y)|\geq t$ for every $z\in X+iY$.
\end{pro}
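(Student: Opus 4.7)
The plan is a direct pairing argument that avoids induction on $i$. Fix $z\in X+iY$ and pick any decomposition $z = x_0 + y_1 + y_2 + \cdots + y_i$ with $x_0\in X$ and $y_1,\ldots,y_i \in Y$. Isolate the two-term prefix $w := x_0 + y_1\in X+Y$ and set $v := y_2 + \cdots + y_i \in (i-1)Y$ (with the convention $v=0$ when $i=1$), so that $z = w + v$ with $v\in (i-1)Y$.

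By the hypothesis applied to $w\in X+Y$, one has $|\nu_w(X,Y)|\geq t$; that is, there exist $t$ distinct elements $y^{(1)}, \ldots, y^{(t)} \in Y$ with $w - y^{(k)} \in X$ for each $k$. For each such $k$,
$$z - y^{(k)} = (w - y^{(k)}) + v \in X + (i-1)Y,$$
so $y^{(k)} \in \nu_z(X+(i-1)Y,\,Y)$. Distinctness of the $y^{(k)}$ then yields $|\nu_z(X+(i-1)Y,\,Y)| \geq t$, which is precisely the desired bound.

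There is no substantive obstacle: the whole argument rests on the trivial observation that, having fixed one particular way of writing $z$ as $x_0 + y_1 + \cdots + y_i$, we may absorb all summands past the first $y_j$ into a passive ``tail'' $v$ and apply the hypothesis to the two-term prefix $w \in X+Y$, each of the $t$ representations of $w$ furnishing a different $y\in Y$ with $z-y\in X+(i-1)Y$. The standing assumption $0\in X\cap Y$ is not actually used by the counting step itself; it is present only so that the nested chain $X\subseteq X+Y\subseteq X+2Y\subseteq\cdots$ holds and the statement fits the framework in which the sets $N_i(X,Y)$ were defined.
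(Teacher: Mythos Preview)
Your proof is correct and follows essentially the same idea as the paper's: write $z$ as a sum, isolate a two-term prefix $w=x_0+y_1\in X+Y$, apply the hypothesis to get $t$ distinct $y^{(k)}\in Y$ with $w-y^{(k)}\in X$, and then attach the remaining $Y$-summands back on. The only cosmetic difference is that the paper reduces to the case $i=2$ and appeals to repeated application (replacing $X$ by $X+Y$ at each step), whereas you handle general $i$ in one shot by absorbing $y_2+\cdots+y_i$ into a single tail $v\in(i-1)Y$; the underlying calculation is identical.
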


\begin{proof}
It suffices to prove the case $i=2$, as the other cases follow by
repeated application. Let $y,y'\in Y$ and $x\in X$. Since
$|\nu_z(X,Y)|\geq t$ for $z\in X+Y$, let  $\{x_i+y_i\}_{i=1}^t$ be
distinct representations of $x+y$, with $x_i\in X$ and $y_i\in Y$.
Hence $\{(x_i+y')+y_i\}_{i=1}^t$ are distinct representations of
$x+y+y'\in X+2Y$ with $x_i+y'\in X+Y$ and $y_i\in Y$, whence
$|\nu_z(X+Y,Y)|\geq t$ for every $z\in X+2Y$.
\end{proof}

\begin{pro}\label{magic_N_i_lemma}
Let $G$ be an abelian group, let $X,\,Y\subseteq G$ be finite and
nonempty with $0\in X\cap Y$, and let $i\geq 1$. If $U\subseteq Y$,
then $N^U_{i+1}(X,Y)-U\subseteq N_i^{\leq U}(X,Y).$
\end{pro}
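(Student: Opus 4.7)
The plan is to unpack the two relevant definitions and show that the required containment follows from a direct translation argument. Take an arbitrary $x \in N^U_{i+1}(X,Y)$ and an arbitrary $u \in U$; the goal is to exhibit a set $V\subseteq U$ with $x - u \in N_i^V(X,Y)$, where $V = \nu_{x-u}(X+(i-1)Y, Y)$ will be the natural candidate. This amounts to checking three things about $x-u$: first, that it lies in $X+iY$; second, that it does not lie in $X+(i-1)Y$ (so that it lies in $N_i$); and third, that $\nu_{x-u}(X+(i-1)Y,Y) \subseteq U$.

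For the first item, the definition of $N^U_{i+1}$ gives $u \in U = \nu_x(X+iY, Y) = (x-(X+iY))\cap Y$, so $x-u \in X+iY$. For the second item, if instead $x-u \in X+(i-1)Y$, then $x = (x-u)+u \in X+(i-1)Y + Y = X+iY$, contradicting $x \in N_{i+1} \subseteq (X+(i+1)Y)\setminus (X+iY)$. Hence $x-u \in N_i(X,Y)$.

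For the third item, take any $v \in \nu_{x-u}(X+(i-1)Y, Y)$, so $v \in Y$ and $x-u-v \in X+(i-1)Y$. Adding $u \in Y$ to both sides yields $x-v = (x-u-v)+u \in X+(i-1)Y + Y = X+iY$, so $v \in \nu_x(X+iY, Y) = U$. This proves the containment $V := \nu_{x-u}(X+(i-1)Y, Y) \subseteq U$, and therefore $x-u \in N_i^V(X,Y) \subseteq N_i^{\leq U}(X,Y)$, as required.

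I do not expect any genuine obstacle here: the statement is a formal consequence of the definitions of $N_i$, $\nu_x$, and $N_i^{\leq U}$, essentially an \emph{associativity of additive layers} observation. The only place to be careful is to distinguish $X+iY$ from $X+(i-1)Y$ when verifying that $x-u$ lies in $N_i$ rather than just in $X+iY$; this step uses precisely the defining property that $x$ does not already belong to $X+iY$.
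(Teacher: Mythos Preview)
Your proof is correct and follows essentially the same approach as the paper's: both arguments verify the three steps $x-u\in X+iY$, $x-u\notin X+(i-1)Y$, and $\nu_{x-u}(X+(i-1)Y,Y)\subseteq U$ via the same translation identity $x-v=(x-u-v)+u$. The only cosmetic difference is that the paper phrases the third step as a proof by contradiction while you argue it directly.
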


\begin{proof}
Let $x\in N_{i+1}^U$. Hence by definition $U$ is the maximal subset
of $Y$ such that $x-U\subseteq X+iY$. Furthermore, $x-U\subseteq
N_i$, since otherwise $x-u=y$ for some $u\in U\subseteq Y$ and $y\in
X+(i-1)Y$, whence $x=u+y\in X+iY$, contradicting that $x\in
N_{i+1}^U\subseteq N_{i+1}$. If $x-U\nsubseteq N_i^{\leq U}$, then
for some $u\in U$, it follows that $x-u=z\in N_{i}$, with $z=u'+x'$
for some $u'\in Y\setminus U$ and $x'\in X+(i-1)Y$. Thus
$x-u'=u+x'\in X+iY$. Hence, since $u'\notin U$, this contradicts the
maximality of $U$.
\end{proof}

We conclude the section with one last important concept.  Given a
pair of subsets $A$ and $B$ of an abelian group $G$, we say that $A$
is \emph{non-extendible} (with respect to $B$), if

\begin{equation}\label{A_nonex}A\cup\{a_0\}+B\neq A+B, \mbox{ for all }a_0\in
\overline{A}.\end{equation} We say $A$ is \emph{extendible}
otherwise. We will call the pair $(A,B)$ \emph{non-extendible} if
both $A$ and $B$ are non-extendible (with respect to each other),
and \emph{extendible} otherwise. Note that in general

\begin{equation}\label{A_half_dual}-B+\overline{A+B}\subseteq
\overline{A},\end{equation} since otherwise $-b+c_0=a$ for some
$b\in B$, $c_0\in \overline{A+B}$, and $a\in A$, implying
$c_0=a+b\in A+B$, contradicting $c_0\in \overline{A+B}$. However,
$A$ being non-extendible is in fact equivalent to equality holding.

\begin{pro} \label{AB_dual} For subsets
$A$ and $B$ of an abelian group, $A$ is non-extendible if and only
if
\begin{equation}\label{A_dual}-B+\overline{A+B}=
\overline{A}.\end{equation} In particular, the pair $(A,B)$ is
non-extendible if and only if both (\ref{A_dual}) and
\begin{equation}\label{B_dual}-A+\overline{A+B}= \overline{B},\end{equation}
hold, and both the pairs $(-A,\overline{A+B})$ and
$(-B,\overline{A+B})$ are also non-extendible.
\end{pro}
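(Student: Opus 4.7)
The plan is to first establish the biconditional (\ref{A_dual}) for a single set, from which the remaining clauses follow by symmetric application and direct substitution. The inclusion $-B+\overline{A+B}\subseteq\overline{A}$ is already recorded in (\ref{A_half_dual}), so only the reverse inclusion needs attention, and this is exactly what the non-extendibility hypothesis controls.

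For the main biconditional, I would argue both directions directly from the definition (\ref{A_nonex}). Given $a_0\in\overline{A}$, non-extendibility furnishes some $x\in(A\cup\{a_0\})+B$ with $x\notin A+B$; no representation $x=a+b$ with $a\in A$ can occur (else $x\in A+B$), so $x=a_0+b$ for some $b\in B$, which rearranges to $a_0=-b+x\in -B+\overline{A+B}$. Conversely, writing $a_0\in\overline{A}$ as $-b+c$ with $b\in B$ and $c\in\overline{A+B}$ yields $a_0+b=c\notin A+B$, giving $(A\cup\{a_0\})+B\neq A+B$. The pair version is then immediate, since by definition $(A,B)$ is non-extendible iff both $A$ and $B$ are individually non-extendible with respect to each other.

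To handle the final assertion, I would apply the main biconditional to each of the four required one-sided conditions and substitute the identities (\ref{A_dual}) and (\ref{B_dual}). For instance, $-B$ being non-extendible with respect to $\overline{A+B}$ unfolds to $-\overline{A+B}+\overline{(-B)+\overline{A+B}}=-\overline{B}$; substituting $(-B)+\overline{A+B}=\overline{A}$ from (\ref{A_dual}) and using $\overline{\overline{A}}=A$ collapses this to $-\overline{A+B}+A=-\overline{B}$, which is the negation of (\ref{B_dual}). The dual condition that $\overline{A+B}$ is non-extendible with respect to $-B$ unfolds to $B+\overline{(-B)+\overline{A+B}}=A+B$, which reduces by the same substitution to the trivial identity $B+A=A+B$. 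The pair $(-A,\overline{A+B})$ is then handled symmetrically by interchanging the roles of $A$ and $B$.

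I do not expect any genuine obstacle: the content of the proposition is essentially a reformulation of non-extendibility via the automatic half-inclusion (\ref{A_half_dual}). The only care needed is routine bookkeeping of negations, double complements $\overline{\overline{S}}=S$, and the identity $\overline{-S}=-\overline{S}$, with no additional additive-combinatorial machinery required beyond what is already stated.
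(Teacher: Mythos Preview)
Your proof is correct and follows essentially the same approach as the paper. The main biconditional is argued identically (using (\ref{A_half_dual}) for one inclusion and unpacking (\ref{A_nonex}) for the other), and for the final clause you unfold the four one-sided non-extendibility conditions by direct substitution of (\ref{A_dual}) and (\ref{B_dual}), whereas the paper phrases the same computation as a brief argument by contradiction; the content is the same. One minor stylistic note: your phrase ``the negation of (\ref{B_dual})'' means ``the negative of both sides of (\ref{B_dual}),'' which is clear in context but could momentarily mislead a reader expecting logical negation.
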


\begin{proof}
Suppose $A$ in non-extendible. Let $a_0\in \overline{A}$. In view of
(\ref{A_nonex}), it follows that there exists $b\in B$ and $c_0\in
\overline{A+B}$ such that $a_0+b=c_0$, whence $a_0=-b+c_0\in
-B+\overline{A+B}$. Thus $\overline{A}\subseteq-B+\overline{A+B}$,
and equality follows in view of (\ref{A_half_dual}).

On the other hand, if we suppose (\ref{A_dual}), then each $a_0\in
\overline{A}$ can be written as $a_0=-b+c_0$, with $b\in B$ and
$c_0\in \overline{A+B}$, implying $a_0+b=c_0$ is an element of
$A\cup\{a_0\}+B$ not contained in $A+B$, whence (\ref{A_nonex})
follows.

If $(A,B)$ is non-extendible, and $(-A,\overline{A+B})$ is
extendible, then in view of (\ref{A_dual}), (\ref{B_dual}), and the
first part of the proposition, it would follows that either $A+B\neq
A+B$ or $-\overline{A+B}+B\neq-\overline{A}$. The former is clearly
a contradiction, while in view of (\ref{A_dual}), the later
contradicts the non-extendibility of $A$. The same argument applied
to the pair $(-B,\overline{A+B})$ shows it to likewise be
non-extendible, completing the proof.
\end{proof}

When $G$ is finite, it is important to note that if
$|A+B|=|A|+|B|+r$ with $B$ non-extendible, then proposition
\ref{AB_dual} implies that
$|-A+\overline{A+B}|=|-A|+|\overline{A+B}|+r$ as well. Thus a
non-extendible pair $(A,B)$ with $|A+B|=|A|+|B|+r$ is part of a
triple of non-extendible pairs, all having their sumsets with
cardinality exactly $r$ more than the bound given by the triangle
inequality. The ideas behind Proposition \ref{AB_dual} trace their
roots back to Vosper \cite{Vosper-thm}, and can also be found in the
isoperimetric method (since $k$-fragments are non-extendible)
\cite{ham2-recursive-descrip} \cite{ham3-vosper-prop}.

\section{$A+B$ Versus $A-B$}

\indent \indent For the proof of our main result, we will need the
following basic theorem, which can be viewed as generalization of
the bound for Sidon sets. Indeed, if $A=B$, $|T|=1$ and $k=1$ (the
conditions for $A$ to be a Sidon set, given from the difference set
point of view), then the familiar bound $|A+A|\geq
\frac{|A|(|A|+1)}{2}$ follows from Theorem
\ref{AB_large_from_A-B}(iii) by noting that $x=|A|$. The bound in
Theorem \ref{AB_large_from_A-B}(i) is a general approximation, whose
second half is without constants depending on divisibility, while
Theorem \ref{AB_large_from_A-B}(ii) gives a non-implicit bound for
$|T|$, and Theorem \ref{AB_large_from_A-B}(iii) gives a non-implicit
bound for $|A+B|$.

Theorem \ref{AB_large_from_A-B} will allow us to conclude $|A+B|$ is
large provided $|\nu_x(A,-B)|$ is small for most $x\in A-B$. This
will be important as the proof of our main result uses (a variation
of) the Dyson $e$-transform of the pair $(A,B)$. Namely for $e\in
A-B$, where $|A|\geq |B|$, we define $B(e)=(e+B)\cap A$ and
$A(e)=(e+B)\cup A$. Then $|A(e)|+|B(e)|=|A|+|B|$ while
$A(e)+B(e)\subseteq e+A+B$. Our main strategy will be to apply
induction to the pair $(A(e),B(e))$. However, we will encounter
problems with this strategy if $|B(e)|$ is small for all $e$ such
that $e+B\nsubseteq A$. In these cases, we will use Theorem
\ref{AB_large_from_A-B} to show that $|A+B|$ is large instead.

\begin{thm}\label{AB_large_from_A-B} Let $A$, $B$ and $T$ be finite subsets of an
abelian group $G$, with $|A|\geq |B|>k\geq 1$ and $|A|\geq |T|$. Let
$\delta$ be the integer such that $|B|(|A|-|T|)\equiv \delta\mod k$,
with $0\leq \delta<k$, let
$M=|T||B|(|B|-k)+(k-1)|A||B|-\delta(k-\delta)$, let $x$ be the
integer, $1\leq x\leq |A||B|$, such that $M+x\equiv 0 \mod |A||B|$,
and let $\delta_0$ be the integer such that $|A||B|+\delta_0\equiv 0
\mod |A+B|$, with $0\leq \delta_0<|A+B|$. If $|\nu_x(A,-B)|\leq k$
for all $x\in G\setminus T$, then the following bounds
hold:\begin{description}
                                                                 \item[(i)]
$|A+B|\geq
\frac{|A|^2|B|^2}{M+|A||B|}\geq\frac{|A|^2|B|}{|T|(|B|-k)+k|A|},$
                                                                 \item[(ii)]
$|T|\geq \frac{|A|^2|B|^2-\delta_0^2-|A+B|\left(k|A||B|-\delta_0-
\delta(k-\delta)\right)}{|A+B||B|(|B|-k)}\geq\frac{|A|^2|B|^2-\delta_0^2-|A+B|\left(k|A||B|-\delta_0\right)}{|A+B||B|(|B|-k)}\geq
|A|\frac{|A||B|-k|A+B|}{|A+B|(|B|-k)},$
                                                                 \item[(iii)]
$|A+B|\geq
\frac{2|A||B|}{\left\lfloor\frac{M+2|A||B|}{|A||B|}\right\rfloor}-
\frac{M}{\left\lfloor\frac{M+|A||B|}{|A||B|}\right\rfloor
\left\lfloor\frac{M+2|A||B|}{|A||B|}\right\rfloor}=
\frac{|A|^2|B|^2(M+2x)}{(M+|A||B|+x)(M+x)}.$
                                                               \end{description}
\end{thm}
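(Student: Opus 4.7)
The plan rests on the identity
$$\sum_{c \in A+B} |\nu_c(A,B)|^2 = \sum_{x \in A-B} |\nu_x(A,-B)|^2,$$
whose two sides count the quadruples $(a_1,b_1,a_2,b_2) \in A\times B \times A\times B$ with $a_1+b_1=a_2+b_2$, grouped respectively by the common sum $c=a_i+b_i$ on the left and by the common difference $x=a_1-b_2=a_2-b_1$ on the right. The right-hand side will be attacked using the hypothesis, and the left-hand side using Cauchy--Schwarz.

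To upper bound the right-hand side, I would extremize over nonnegative integer sequences $(|\nu_x(A,-B)|)_{x\in G}$ satisfying the constraints $\sum_x |\nu_x(A,-B)|=|A||B|$, $|\nu_x(A,-B)|\leq|B|$ for $x\in T$, and $|\nu_x(A,-B)|\leq k$ for $x\notin T$. The sum of squares is maximized by saturating the bound $|B|$ on every element of $T$ (contributing $|T||B|^2$) and then placing the remaining mass $|B|(|A|-|T|)$ on $G\setminus T$ in chunks of size $k$ with a single leftover chunk of size $\delta$; a routine calculation gives the extremal value $M+|A||B|$, with the term $-\delta(k-\delta)$ arising precisely from the integer quantization of the leftover. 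The hypothesis $|A|\geq|T|$ is exactly what guarantees the remaining mass is nonnegative, so this configuration is admissible.

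For the lower bound on the left-hand side, plain Cauchy--Schwarz gives $\sum_c|\nu_c(A,B)|^2\geq|A|^2|B|^2/|A+B|$, which combined with the upper bound immediately yields (i) and, after solving for $|T|$, the weakened form of (ii). To recover the sharper forms of (ii) and (iii), I would replace Cauchy--Schwarz by its integer version: among $|A+B|$ positive integer values summing to $|A||B|$, the sum of squares is minimized when the values take only the two consecutive integers $\lfloor|A||B|/|A+B|\rfloor$ and $\lceil|A||B|/|A+B|\rceil$. Letting $Q=\lfloor|A||B|/|A+B|\rfloor$, the minimum evaluates to $(2Q+1)|A||B|-Q(Q+1)|A+B|$, which one checks also equals $(|A|^2|B|^2-\delta_0^2)/|A+B|+\delta_0$. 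Substituting into the main inequality yields (ii), while inverting to solve for $|A+B|$ gives $|A+B|\geq(2Q|A||B|-M)/(Q(Q+1))$, and identifying $Q$ with the integer $q$ determined by $M+x=q|A||B|$ produces the three equivalent expressions in (iii). The conceptual step is easy; the main obstacle is the bookkeeping needed to keep track of the three independent integrality corrections $\delta$, $\delta_0$, and $x$, to match the floor functions in (iii) with $q$, and to verify that the extremizers remain admissible throughout.
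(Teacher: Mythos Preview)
Your proposal is correct and is essentially the paper's own argument, only in different clothing: the paper packages the same counting as an equality of edge numbers $|E(\mathcal{M}(A,B))|=|E(\mathcal{M}(A,-B))|$ in two ``multiplicity graphs,'' but since $|E(\mathcal{M}(A,B))|=\tfrac12\bigl(\sum_c|\nu_c(A,B)|^2-|A||B|\bigr)$ this is exactly your second-moment identity, and the paper's upper and lower edge estimates are exactly your extremal configuration and (integer) Cauchy--Schwarz. The one point you leave implicit that the paper spells out is the passage in (iii) from the bound $|A+B|\geq (2Q|A||B|-M)/(Q(Q+1))$, where $Q=\lfloor|A||B|/|A+B|\rfloor$ still depends on the unknown $|A+B|$, to the explicit bound with $q=\lfloor(M+|A||B|)/(|A||B|)\rfloor$: this requires a short monotonicity argument (the bound as a function of $Q$ is increasing on the relevant range, and one checks $Q\le q$ always), which the paper carries out via a discrete derivative and a brief case split.
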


We first give some basic notions from Graph Theory, in order to put
the ideas of the proof of Theorem \ref{AB_large_from_A-B} in broader
context. For a graph $\Gamma$, we use $\overline{\Gamma}$ to denote
the complement of $\Gamma$, i.e. the graph on the same vertex set
with the edges being the non-edges of $\Gamma$. Also, $V(\Gamma)$
denotes the vertex set, and $E(\Gamma)$ the edge set. The line graph
of $\Gamma$, denoted $L(\Gamma)$, is the graph whose vertices are
the edges of $\Gamma$, with two vertices being adjacent if the
corresponding edges in $G$ share a common vertex. Given a pair of
nonempty subsets $(A,B)$ of an abelian group $G$, we can define a
graph $\mathcal{M}(A,B)$ whose vertices are the ordered pairs from
$A\times B$, with $\{(a,b),(a',b')\}$ an edge precisely when
$a+b=a'+b'$. Hence $\mathcal{M}(A,B)$ is a vertex disjoint union of
cliques $Cl_{x_i}$, with each clique $Cl_{x_i}$ corresponding to an
element $x_i\in A+B$ such that $|\nu_{x_i}(A,B)|=|V(Cl_{x_i})|=m_i$.
Alternatively, $\mathcal{M}(A,B)$ is the complement of the complete
multipartite graph corresponding to the sequence $m_1,\ldots,m_c$,
where $c=|A+B|$, i.e.,
$\mathcal{M}(A,B)=\overline{K_{m_1,m_2,\ldots,m_{c}}}$. Letting
$n_i$ be the number of cliques $Cl_x$ with $|V(Cl_x)|=i$, we note
that
$$|A+B|=\sum_{i\geq 1}n_i,$$
\begin{equation}\label{edge_count}|E(\mathcal{M}(A,B))|=\sum_{i\geq 1}n_i\binom{i}{2}.\end{equation}
The sumset and difference set of $A$ and $B$ are related via these
graphs as follows.

\begin{pro} \label{Graph_Bijection}If $A$ and $B$ are finite, nonempty subsets of an
abelian group $G$, then the map
$\varphi:E(\mathcal{M}(A,B))\rightarrow E(\mathcal{M}(A,-B))$,
defined by $\{(a,b),(a',b')\}\mapsto \{(a,-b'),(a',-b)\}$, is a
bijection that maps adjacent edges in $L(\mathcal{M}(A,B))$ to
distinct components of $\mathcal{M}(A,-B)$. In particular, $\varphi$
embeds
$L(\mathcal{M}(A,B))\hookrightarrow\overline{L(\mathcal{M}(A,-B)}$,
and $|E(\mathcal{M}(A,B))|=|E(\mathcal{M}(A,-B))|$.
\end{pro}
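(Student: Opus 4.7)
The plan is to treat $\varphi$ as a simple rearrangement of coordinates and verify each claim by unpacking the definition of $\mathcal{M}$. First I would check that $\varphi$ is well-defined: an edge $\{(a,b),(a',b')\}$ of $\mathcal{M}(A,B)$ encodes the relation $a+b=a'+b'$, which I rewrite as $a+(-b')=a'+(-b)$, and this is exactly the edge condition defining $\{(a,-b'),(a',-b)\}$ as an edge of $\mathcal{M}(A,-B)$. Note that an edge forces $(a,b)\neq(a',b')$, and together with $a+b=a'+b'$ this gives $a\neq a'$, so the target really is a pair of distinct vertices, and the prescription is manifestly symmetric in the two endpoints of the input edge, hence independent of the chosen ordering.

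Bijectivity is then immediate: the analogous rule $\varphi': E(\mathcal{M}(A,-B))\to E(\mathcal{M}(A,B))$, sending $\{(a,c),(a',c')\}\mapsto \{(a,-c'),(a',-c)\}$, is a two-sided inverse of $\varphi$ by direct substitution, which also proves $|E(\mathcal{M}(A,B))|=|E(\mathcal{M}(A,-B))|$.

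The substantive step is the component-separation property. Take two distinct edges sharing the vertex $(a',b')$, say $e_1=\{(a,b),(a',b')\}$ and $e_2=\{(a',b'),(a'',b'')\}$. Then $\varphi(e_1)$ lies in the clique of $\mathcal{M}(A,-B)$ indexed by $a-b'$, while $\varphi(e_2)$ lies in the clique indexed by $a'-b''$. Assuming $a-b'=a'-b''$ and combining with the edge relations $a+b=a'+b'=a''+b''$ forces $(a,b)=(a'',b'')$ by a two-line linear manipulation, contradicting $e_1\neq e_2$. Hence the images sit in different components of $\mathcal{M}(A,-B)$, and in particular share no vertex, so the pair $\{\varphi(e_1),\varphi(e_2)\}$ is a non-edge of $L(\mathcal{M}(A,-B))$ and therefore an edge of $\overline{L(\mathcal{M}(A,-B))}$. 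This gives the claimed embedding $L(\mathcal{M}(A,B))\hookrightarrow \overline{L(\mathcal{M}(A,-B))}$.

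There is no real obstacle; the proposition reduces entirely to unwinding definitions, with the only mildly nontrivial ingredient being the short algebraic deduction used in the separation step.
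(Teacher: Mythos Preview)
Your proof is correct and follows essentially the same approach as the paper: verify well-definedness via the rewriting $a+b=a'+b'\iff a-b'=a'-b$, obtain bijectivity from the obvious inverse (the paper phrases this as ``$\varphi$ is its own inverse''), and derive the component-separation claim by the same short linear manipulation. Your write-up is somewhat more careful than the paper's (explicitly checking that the image is a genuine edge and that the rule is symmetric in the endpoints), but there is no substantive difference in strategy.
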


\begin{proof}
Since $a+b=a'+b'$ implies $a-b'=a'-b$, it follows that map $\varphi$
is well defined. Noting that $\varphi$ is its own inverse, it
follows that $\varphi$ is a bijection. If $a_1+b_1=a_2+b_2=a_3+b_3$,
with $a_i\in A$ distinct and $b_i\in B$ distinct, then we note that
$a_1-b_2=a_2-b_1=a_2-b_3=a_3-b_2$ would contradict that $b_1$ and
$b_3$ are distinct. Hence the adjacent edges
$\{(a_1,b_1),(a_2,b_2)\}$ and $\{(a_2,b_2),(a_3,b_3)\}$ are mapped
to distinct components in $\mathcal{M}(A,-B)$, and thus cannot be
adjacent in $L(\mathcal{M}(A,-B))$.
\end{proof}

It is important to note, by means of a simple extremal argument or
discrete derivative, that once $|A+B|$ is fixed, then
(\ref{edge_count}) is minimized by taking all cliques of as near
equal a size as possible. Likewise, (\ref{edge_count}) is maximized
in just the opposite way, by taking as many cliques of largest
allowed size as possible, followed by as many cliques of the next
largest allowed size as possible, \ldots, etc.

For the proof of Theorem \ref{AB_large_from_A-B}, which we now
proceed with, we will need only the equality
$|E(\mathcal{M}(A,B))|=|E(\mathcal{M}(A,-B))|$.

\begin{proof} Let $|A|=a$, $|B|=b$ and $|T|=t$.
Since $|\nu_x(A,-B)|\leq k$ for all $x\in G\setminus T$, since
$a\geq b$, and since $a\geq t$, it follows that
\begin{equation}\label{Differense_set_lemma_LHS}|E(\mathcal{M}(A,-B))|\leq
t\binom{b}{2}+\frac{b(a-t)-\delta}{k}\binom{k}{2}+\binom{\delta}{2}=
\frac{tb^2-tkb+(k-1)ab-\delta(k-\delta)}{2}=\frac{M}{2}.\end{equation}
Thus $M\geq 0$, and $M$ is even. Let
$l=\left\lceil\frac{ab}{|A+B|}\right\rceil-1$, and let $c=|A+B|$.
Note
\begin{equation}\label{c_relates_to_l}c=|A+B|=\frac{ab+\delta_0}{l+1},\end{equation} where
$ab+\delta_0\equiv 0\mod c$, and $0\leq \delta_0<c$. Since
$|E(\mathcal{M}(A,B))|$ will be minimized when $l\leq
|\nu_x(A,B)|\leq l+1$, it follows in view of (\ref{c_relates_to_l})
that
\begin{equation}\label{Difference_set_lemma_RHS}|E(\mathcal{M}(A,B))|\geq
c\binom{l}{2}+(ab-cl)l=\frac{l(2ab-c(l+1))}{2}=\frac{l(ab-\delta_0)}{2}=
(\frac{ab+\delta_0}{c}-1)\frac{(ab-\delta_0)}{2}.
\end{equation}

Considering the above bound as a function of $\delta_0$, we note
that it is quadratic in $\delta_0$ with negative lead coefficient.
Thus this quantity will be minimized at a boundary value for
$\delta_0$. Hence comparing the bound evaluated at $\delta_0=0$ and
$\delta_0=c-1$, it follows that
\begin{equation}\label{Difference_set_lemma_RHS-2}|E(\mathcal{M}(A,B))|\geq
(\frac{ab}{c}-1)\frac{ab}{2}.
\end{equation}
Since $|E(\mathcal{M}(A,B))|=|E(\mathcal{M}(A,-B))|$ follows from
Proposition \ref{Graph_Bijection}, then by comparing
(\ref{Differense_set_lemma_LHS}) and
(\ref{Difference_set_lemma_RHS}), it follows that $c=|A+B|$ must
satisfy the bound
\begin{equation}\label{Difference_set_lemma_bound}|A+B|\geq
\frac{a^2b^2-\delta_0^2}{M+ab-\delta_0},\end{equation} and by
comparing (\ref{Differense_set_lemma_LHS}) and
(\ref{Difference_set_lemma_RHS-2}), it follows that
\begin{equation}\label{Difference_set_lemma_bound-2}|A+B|\geq
\frac{a^2b^2}{M+ab}.\end{equation} Noting that
$M=tb(b-k)+(k-1)ab-\delta(k-\delta)\leq tb(b-k)+(k-1)ab$, it follows
that (\ref{Difference_set_lemma_bound-2}) implies (i), and
(\ref{Difference_set_lemma_bound}) rearranges to yield (ii) (the
second two inequalities follow immediately from the first).

Suppose that
\begin{equation}\label{A-B_sym_bound}|A+B|\geq
\min\left\{\frac{|A||B|}{d},\,\frac{2|A||B|}{d+1}-\frac{M}{d(d+1)}\right\},\end{equation}
for every integer $d\geq 1$. Comparing the two bounds in the
minimum, we see that
$$\frac{2|A||B|}{d+1}-\frac{M}{d(d+1)}\leq\frac{|A||B|}{d}$$ holds for
$d\leq \frac{M+ab}{ab}$. Thus, since $M\geq 0$, then applying
(\ref{A-B_sym_bound}) with
$d=\left\lfloor\frac{M+ab}{ab}\right\rfloor\geq 1$ yields (iii). So
it remains to establish (\ref{A-B_sym_bound}).

Note that if $l=0$, then $|A+B|=|A||B|$ follows from
(\ref{c_relates_to_l}), yielding (\ref{A-B_sym_bound}). So we may
assume $l>0$. Hence comparing (\ref{Differense_set_lemma_LHS}) with
(\ref{Difference_set_lemma_RHS}) (expressed in terms of $l$ without
$\delta_0$), it follows that
\begin{equation}\label{differnce_set_lemma_l_bound} |A+B|\geq
\frac{2ab}{l+1}-\frac{M}{l(l+1)}.
\end{equation}
Considering the above bound as a function of $l$, and computing its
discrete derivative, i.e., the bound evaluated at $l$ minus the
bound evaluated at $l-1$, for $l\geq 2$, we obtain
\begin{equation}\label{diffset_der}\frac{2M+2ab-2abl}{l(l^2-1)}.\end{equation}
Noting that (\ref{diffset_der}) is non-negative for $l\leq
\frac{M+ab}{ab}$, it follows that the bound in
(\ref{differnce_set_lemma_l_bound}) monotonically increases up to
$l\leq \frac{M+ab}{ab}$.

Suppose $l>\frac{M+ab}{ab}$. Hence from (\ref{c_relates_to_l}) it
follows that $$\frac{M+ab+1}{ab}\leq l=\frac{ab+\delta_0}{c}-1\leq
\frac{ab+c-1}{c}-1<\frac{ab}{c}.$$ Thus, in view of (i), it follows
that $$\frac{a^2b^2}{M+ab}\leq c<\frac{a^2b^2}{M+ab+1},$$ a
contradiction. So $l\leq \frac{M+ab}{ab}$. Consequently, the bound
in (\ref{differnce_set_lemma_l_bound}) is monotonically increasing
for all possible values of $l$.

If $l\leq d-1$, then $|A+B|=c\geq \frac{ab}{d}$ follows from
(\ref{c_relates_to_l}). Otherwise, $l\geq d$, whence the
monotonicity of (\ref{differnce_set_lemma_l_bound}) implies
$$|A+B|\geq
\frac{2ab}{d+1}-\frac{M}{d(d+1)}.$$ Hence $|A+B|$ is greater than
the minimum of these two bounds, yielding (\ref{A-B_sym_bound}), and
completing the proof.
%
\end{proof}

If $|B|\leq k$, then the restriction $|\nu_x(A,-B)|\leq k$ for all
$x\in G\setminus T$ holds trivially. Hence the assumption $|B|>k$ is
needed to gain useful information. Likewise, the assumption $|A|\geq
|T|$ is not very restrictive, since for $|A|\leq |T|$ it is possible
that $|\nu_x(A,-B)|=|B|$ for all $x\in A-B$, in which case we obtain
only the trivial bound for $|E(\mathcal{M}(A,-B)|$ (in such cases,
an improved estimate of $|E(\mathcal{M}(A,-B)|$ would yield an
improved estimate for $|A+B|$, though we will handle this issue by
bounding $|T|$ instead). We remark that the bound given in (iii) is
minimized when $x=|A||B|$, which yields (i). However, the estimate
$\frac{|A|^2|B|^2}{M+|A||B|}$ can be improved upon, under a variety
of circumstances, by using more precise estimates for $x$. For
instance, if $0\leq |T||B|(|B|-k)-\delta(k-\delta)< |A||B|$, then
$x=|A||B|-|T||B|(|B|-k)+\delta(k-\delta)$ follows from the
definitions of $M$ and $x$.

\section{A Step Beyond KST}
\indent\indent We begin with our main result, Theorem
\ref{KST_Step_Beyond}. Note that the second part of Theorem
\ref{KST_Step_Beyond} shows that the case with $A+B$ periodic
reduces to the case when $A+B$ is aperiodic. In the case when $|G|$
is prime, then the reason for assuming the second part of the bound
in (\ref{intro_chat-1}) was to exclude the cases when $|B|$ is too
small to gain exceptional structural information. Indeed, $|A+B|\leq
|A||B|\leq |A|+|B|+r$ holds trivially for any pair $(A,B)$ with $r$
sufficiently large compared to both $|A|$ and $|B|$. We note that
each of the exceptional cases given by types (V-VII) correspond
precisely to those degenerate cases in $\Z/p\Z$ when
$\min\{|A|,\,|B|\}$ is very small, lifted via a quasi-periodic
decomposition in precisely the same way the elementary pairs of
types (I-IV) were lifted for KST. The last additional exception
given by type (VIII) is the first case where a non-quasi-periodic
example does not directly correspond to behavior in $\Z/p\Z$.
However, the structure of a type (VIII) pair is highly restricted,
including
$\db(A+B,\mathcal{P}),\,\db(\overline{A},\mathcal{P}),\,\db(\overline{B},\mathcal{P})\leq
2$, and cannot occur in a cyclic group. In essence, a type (VIII)
pair $(A_0,B_0)$ has both $A_0$ and $B_0$ being arithmetic
progressions (with common difference) of cosets of a Klein four
subgroup $H_b$, with the exception that all four end terms contain
only two (rather than four) elements from the coset, with these
elements in the first term of both progressions corresponding to two
cosets of the same order two subgroup $H_{d_1}$ of $H_b$, and these
elements in the last term of both progressions corresponding to two
cosets of a different order two subgroup $H_{d_2}$ of $H_b$.

The degenerate cases in $G=\Z/p\Z$ that correspond to when $A+B$ is
very close to $G$ (for instance, type (IV) pairs), also have
corresponding analogs whenever $\db(A+B,\mathcal{P})$ is very small.
All this leads one to wonder if $|A+B|=|A|+|B|+r$ and
$\db(\overline{A},\mathcal{P}),\,\db(\overline{B},\mathcal{P}),\,\db(A+B,\mathcal{P})\geq
r+3$, with equality holding for at most one of the three quantities
(note that $|A|\geq \db(\overline{A},\mathcal{P})$, with equality
holding when $|G|$ is prime), would always imply there exists a pair
$(A',B')$ with $|A'+B'|=|A'|+|B'|-1$, such that
$\db(A,A'),\,\db(B,B')\leq r+1$ (or better yet, a pair $(A',B')$
with $|A'+B'|=|A'|+|B'|+r-1$, such that $\db(A,A'),\,\db(B,B')\leq
1$). If true, this would mean that either one of $\overline{A}$,
$\overline{B}$ or $A+B$ is close to being periodic, or else
$|A+B|-|A|-|B|$ bounds how far the pair $(A,B)$ can be from being a
critical pair $(A',B')$.

\begin{thm}\label{KST_Step_Beyond} Let $A$ and $B$ be finite, nonempty
subsets of an abelian group $G$ with $|A+B|=|A|+|B|$. If $A+B$ is
aperiodic, then either there exist $\alpha,\,\beta\in G$ such that
\begin{equation}\label{Kemp_extendible}|A\cup\{\alpha\}+B\cup\{\beta\}|=
|A\cup\{\alpha\}|+|B\cup\{\beta\}|-1,\end{equation} or there exist
quasi-period decompositions $A=A_1\cup A_0$ and $B=B_1\cup B_0$ with
common quasi-period $H_a$, and $A_0$ and $B_0$ nonempty, such that

$(i)$ $|\nu_c(\phi_a(A),\phi_a(B))|=1$, where  $c=
\phi_a(A_0)+\phi_a(B_0)$

$(ii)$ $|\phi_a(A)+\phi_a(B)|=|\phi_a(A)|+|\phi_a(B)|-1$,

$(iii)$ the pair $(A_0,B_0)$ is of one of the following types (each of which imply $|A_0+B_0|=|A_0|+|B_0|$):\\
(V) $|A_0|=2$ or
$|B_0|=2$, and $|A_0+B_0|=|A_0|+|B_0|\leq |H_a|-2$;\\
(VI) $|A_0|=|B_0|=3$, $A_0=b_0+B_0$ for some $b_0\in G$, and
$|2A_0|>2|A_0|-1$;\\
(VII) $|A_0|=3$ or $|B_0|=3$, and either $|2A_0|> 2|A_0|-1$ and
$B_0=b_0+((2a_0-\overline{2A_0})\cap H_a)$ for some $a_0\in A_0$ and
$b_0\in B_0$ (if $|A_0|=3$), or $|2B_0|>2|B_0|-1$ and
$A_0=a_0+((2b_0-\overline{2B_0})\cap H_a)$ for some $a_0\in A_0$ and
$b_0\in B_0$ (if $|B_0|=3$),
and in both cases $|A_0+B_0|=|H_a|-3\geq 6$;\\
(VIII) there exists a subgroup $H_b\leq H_a$, with $H_b\cong
\Z/2\Z\times \Z/2\Z$, such that both $\phi_b(A_0)$ and $\phi_b(B_0)$
are arithmetic progressions with common difference $\phi_b(d)$,
where the order of $\phi_b(d)$ is at least
$|\phi_b(A_0)|+|\phi_b(B_0)|-1$ and
$|\phi_b(A_0)|,\,|\phi_b(B_0)|\geq 2$, such that $a_i+H_b\subseteq
A_0$ and $b_i+H_b\subseteq B_0$ for all non-end terms
$\phi_b(a_i)\in \phi_b(A_0)$ and $\phi_b(b_i)\in \phi_b(B_0)$, such
that $(a_0+H_b)\cap A_0$ and $(b_0+H_b)\cap B_0$ are each an
$H_{d_1}$-coset, and such that $(a_l+H_b)\cap A_0$ and
$(b_{l'}+H_b)\cap B_0$ are each an $H_{d_2}$-coset, where $H_{d_1}$
and $H_{d_2}$ are distinct cardinality two subgroups of $H_b$, where
$\phi_b(a_0)$ and $\phi_b(b_0)$ are each the first term in
$\phi_b(A_0)$ or $\phi_b(B_0)$, respectively, and where
$\phi_b(a_l)$ and $\phi_b(b_{l'})$ are each the last term in
$\phi_b(A_0)$ or $\phi_b(B_0)$, respectively (whence
$\db(C,\mathcal{QP}_{H_{d_i}})=1$ and
$\db(C,\mathcal{P}_{H_{d_i}})=2$, for all $C\in
\{A,B,A+B,\overline{A},\overline{B},\overline{A+B}\}$ and $i=1,2$).

If $A+B$ is periodic with maximal period $H_a$, then either
(\ref{Kemp_extendible}) holds, or else $A$ and $B$ are
$H_a$-periodic, $\phi_a(A+B)$ is aperiodic, and
$|\phi_a(A)+\phi_a(B)|=|\phi_a(A)|+|\phi_a(B)|$.
\end{thm}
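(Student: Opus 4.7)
I plan to induct on $\min\{|A|,|B|\}$, using the Dyson $e$-transform as the main reduction tool together with Theorem~\ref{AB_large_from_A-B} to corral the residual cases. First I would reduce the periodic case of $A+B$ to the aperiodic case via Kneser's Theorem; inside the aperiodic case I would reduce to non-extendible pairs and invoke the Dyson transform, which either yields a critical pair (to which KST applies) or a strictly smaller pair satisfying the inductive hypothesis. Pairs immune to both reductions must, by Theorem~\ref{AB_large_from_A-B}, fall into the rigid structure of one of the types (V)--(VIII).

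\textbf{Periodic reduction.} Suppose $A+B$ is $H_a$-periodic with $H_a$ maximal nontrivial. Kneser's Theorem applied to $\phi_a(A),\phi_a(B)$ and multiplied through by $|H_a|$ yields $|A+B|\geq|A|+|B|-|H_a|+\rho$, where $\rho=\db(A,A+H_a)+\db(B,B+H_a)$; hence $\rho\leq|H_a|$. If $\rho=0$, then $A$ and $B$ are $H_a$-periodic, equality holds in Kneser modulo $H_a$, and the maximality of $H_a$ forces $\phi_a(A+B)$ aperiodic---giving the second conclusion of the theorem directly. If $\rho\geq 1$, take any $\alpha\in(A+H_a)\setminus A$; since $\alpha+B\subseteq A+H_a+B=A+B$, we have $A\cup\{\alpha\}+B=A+B$, so~\eqref{Kemp_extendible} holds with any $\beta\in B$.

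\textbf{Aperiodic case and Dyson transform.} Assume $A+B$ is aperiodic. If $A$ is extendible---i.e.\ $A\cup\{\alpha\}+B=A+B$ for some $\alpha\notin A$---then $|A\cup\{\alpha\}+B|=|A|+|B|=|A\cup\{\alpha\}|+|B|-1$, yielding~\eqref{Kemp_extendible} with any $\beta\in B$; symmetrically for $B$. Thus assume $(A,B)$ non-extendible and $|A|\geq|B|$. For $e\in A-B$ with $e+B\not\subseteq A$, form $A(e)=(e+B)\cup A$ and $B(e)=(e+B)\cap A$: then $|A(e)|+|B(e)|=|A|+|B|$, $|B(e)|<|B|$, and $A(e)+B(e)\subseteq e+(A+B)$. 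If $|A(e)+B(e)|\leq|A(e)|+|B(e)|-1$, the pair $(A(e),B(e))$ is critical, KST supplies a quasi-periodic decomposition, and lifting back by accounting for the single extra element of $A+B$ produces a quasi-periodic decomposition of $(A,B)$ whose base pair is of type (V), (VII) or (VIII)---corresponding respectively to KST types (I), (IV), or (II) refined by a Klein-four subgroup. Otherwise $|A(e)+B(e)|=|A(e)|+|B(e)|$ with $|B(e)|<|B|$, and the inductive hypothesis applies to $(A(e),B(e))$, whose structure lifts through the transform.

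\textbf{Main obstacle.} The hardest case is pairs for which no $e$ yields either a critical pair or a pair usable for induction---for instance because after every transform $(A(e),B(e))$ extends only by re-constructing $(A,B)$ itself. Here $|\nu_e(A,-B)|=|B(e)|$ is forced to values near $0$ or $|B|$ for all $e$ outside a small exceptional set $T$. Applying Theorem~\ref{AB_large_from_A-B} to $(A,-B)$ with small $k$ then yields a lower bound on $|A+B|$ strictly exceeding $|A|+|B|$, unless $|B|\in\{2,3\}$ (producing types (V), (VI), (VII)) or unless $A+B$ is very close to the whole group, in which case the sole non-progression-like exception available is the Klein-four configuration of type (VIII). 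The principal technical burden is the bookkeeping needed to match each resisted regime precisely to one of (V)--(VIII), and to verify that the lifted quasi-periodic decompositions remain compatible through the inductive chain.
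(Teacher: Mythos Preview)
Your high-level plan (Dyson $e$-transform plus Theorem~\ref{AB_large_from_A-B}) matches the paper's strategy, but your mechanism for producing the exceptional types (V)--(VIII) is wrong, and this is a genuine gap. Type (VIII) does not come from the $e$-transform at all: in the paper it arises entirely from a preliminary analysis of the case $\db(A+B,\mathcal P)=2$, via a careful application of KST to $(\phi_a(A),\phi_a(B))$ with $|H_a|=2$ that rules out KST types (II), (III), (IV) and most of type (I), forcing the Klein-four configuration. Your proposal skips the $\db(A+B,\mathcal P)\le 2$ analysis entirely and instead hopes to recover (VIII) as a residue of the transform, which will not work. Similarly, types (V)--(VII) do not emerge by ``lifting back'' KST structure on $(A(e),B(e))$; they come from the base cases $\min\{|A|,|B|\}\le 3$ (handled directly, and for $|A|=3<|B|$ via a separate induction using Lemma~\ref{kemp_secondlemma}) combined with a quasi-periodic reduction lemma (Lemma~\ref{kemp_Lemma_qp}) that you never invoke.

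Your ``lifting back'' step is also mis-specified. When $(A(e),B(e))$ is critical or satisfies the inductive hypothesis, the only object shared with the original pair is the sumset: $A(e)+B(e)$ equals $e+(A+B)$ up to at most one element, so you learn structure on $\overline{A+B}$, not on $A$ or $B$. The paper pushes this back via Proposition~\ref{AB_dual} and a battery of transfer lemmas (Lemmas~\ref{kemp_lem_sec_transfer}--\ref{kemp_lem_AAP_transfer}); in every successful instance of the $e$-transform this yields \eqref{Kemp_extendible}, never a type (V)--(VIII) decomposition. Further substantial sub-cases are missing from your outline: $A(e)+B(e)$ periodic (not covered by KST and requiring its own argument); $|B(e)|=2$ (handled not by Theorem~\ref{AB_large_from_A-B} but by deducing $c_d(A+B)\le 2$ and iterating with the pair $(A,B+\{0,d\})$ via Lemma~\ref{kemp_lem_2-comp-transfer}); and the reduction of infinite $G$ to finite $G$ by a Freiman isomorphism.
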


Observe that (\ref{Kemp_extendible}) implies that a pair $(A,B)$
with $|A+B|=|A|+|B|$ is a large subset of a critical pair $(A',B')$
(with at most one hole in each set). One might also like to know
where such holes $\alpha$ and $\beta$ can be placed in a critical
pair $(A',B')$.

If the pair $(A,B)$ is extendible, then this question is easily
answered using KST as follows. The extendibility of $(A,B)$ implies
that (\ref{Kemp_extendible}) holds with $A'+B'=A+B$, i.e, that only
one element $\alpha$ was deleted, say from $A'$ (i.e., $\beta\in
B$), whence there cannot have been any unique expression element of
the form $\alpha+b$ in $A'+B'$. If $|\nu_x(A',B')|\geq 2$ for all
$x$, then $\alpha$ can be any element. Thus let $A'=A'_1\cup A'_0$
and $B'=B'_1\cup B'_0$ be the Kemperman decompositions with common
quasi-period $H_a$. In view of KST(iii), $\alpha\in A$ can be any
element from $A'_1$. If we have type (I), then $\alpha\notin A'_0$;
if type (II), then $\alpha$ can be any element in $A'_0$ except the
two end terms of the arithmetic progression; if type (III), then
$A'_0=(g_0-(\overline{B'_0}\cap (g_1+H_a)))\cup\{g_0-g_1\}$, where
$g_1\in B'_0$, and $\alpha$ can be any element except $g_0-g_1$; and
if type (IV), then $\alpha$ can be any element of $A'_0$.

If the pair $(A,B)$ is non-extendible, then the answer is more
complicated, and is provided by the following corollary.

\begin{cor}\label{nec-suff-cond} Let $A$ and $B$ be finite, nonempty
subsets of an abelian group $G$ with $|A+B|=|A|+|B|$, and suppose
that $(16)$ holds. Let $A'=A\cup \{\alpha\}$ and let $B'=B\cup
\{\beta\}$. If $(A,B)$ is non-extendible, then $A'+B'$ cannot be
periodic without a unique expression element. Hence we can let
$A'=A'_1\cup A'_0$ and $B'=B'_1\cup B'_0$ be the Kemperman
decompositions with common quasi-period $H_a$. Furthermore, one of
the following must also hold:

(A) $(A',B')$ has type (II) with both $\alpha\in A'_0$ and $\beta\in
B'_0$, both $\alpha$ and $\beta$ are the second term (by appropriate
choice of sign for the progression) in their arithmetic progression
with difference $d$, $|A'_0|,\,|B'_0|\geq 3$ with equality holding
for at most one of the two, and $|\langle d\rangle|\geq
|A'_0|+|B'_0|-1$;

(B) $(A,B)$ is obtainable by applying Proposition \ref{AB_dual} to a
non-extendible type (V) pair $(A_0,B_0)$ from $H_a\leq G$, and
inserting this pair (via appropriate translation) into the aperiodic
parts of a quasi-periodic decomposition with quasi-period $H_a$ that
satisfies KST modulo $H_a$;

(C) $(A',B')$ has type (I) such that both the aperiodic parts in the
Kemperman decompositions have cardinality one, $\alpha\in A'_1$,
$\beta\in B'_1$, and $(\phi_a(A'),\phi_a(B'))$ has type (II) with
both $\phi_a(A'_{\alpha})$ and $\phi_a(B'_{\beta})$ contained in the
aperiodic part of the mod $H_a$ Kemperman decomposition, and if
$|H_a|>2$, then both are the second term in their arithmetic
progression with $A'_0+\beta=B'_0+\alpha=\gamma\notin A+B$, while if
$|H_a|=2$, then either both are the first term in their progression,
and $A'_0+\beta\neq B'_0+\alpha$ if both progressions have length
two, or else both are the second term in their arithmetic
progression, $A'_0+\beta=B'_0+\alpha=\gamma\notin A+B$, and one of
the progressions has at least three terms (in all cases, by
appropriate choice of sign for the progressions)
\end{cor}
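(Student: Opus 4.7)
The plan is: (1) use non-extendibility to tighten (16) to a ``unique new sumset element $\gamma$'' picture; (2) refute the periodic-without-unique-expression case by combining Kneser's Theorem with a free-placement-of-holes argument together with Proposition \ref{AB_dual}; and (3) apply KST to $(A',B')$ and distribute over its four types, matching each to (A), (B), or (C).

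For the setup, I would first show $\alpha\notin A$ and $\beta\notin B$. If $\alpha\in A$ then $A'=A$, and combining $|A'+B'|=|A'|+|B'|-1$ with $|A+B|=|A|+|B|$ leaves two sub-cases: $\beta\in B$ yields $|A+B|=|A|+|B|-1$, absurd; $\beta\notin B$ yields $A+\beta\subseteq A+B$, violating non-extendibility of $B$. Hence $|A'+B'|=|A+B|+1$, so $A'+B'=(A+B)\cup\{\gamma\}$ for a unique $\gamma\notin A+B$. Non-extendibility then pins down $\gamma=\alpha+b_0=a_0+\beta$ with unique $a_0\in A$ and $b_0\in B$, and $\alpha+\beta\in A+B$ (otherwise $\alpha+\beta=\gamma$ forces $\beta=b_0\in B$).

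For Part 1, suppose for contradiction $A'+B'$ is maximally $H$-periodic with $|H|\geq 2$ and no unique-expression element. Kneser's Theorem with the hole-counting in the Preliminaries gives $|(A'+H)\setminus A'|+|(B'+H)\setminus B'|=|H|-1$, together with the free-placement principle: any $\tilde A\subseteq A'+H$ and $\tilde B\subseteq B'+H$ of total size $|A'|+|B'|$ satisfies $\tilde A+\tilde B=A'+B'$. Taking $\tilde A=A\cup\{a^*\}$ for a hole $a^*\in(A'+H)\setminus A'$ and $\tilde B=B'$ gives $a^*+B\subseteq A+B\cup\{\gamma\}$, so non-extendibility of $A$ forces $\gamma\in a^*+B$; iterating yields $(A'+H)\setminus A\subseteq\gamma-B$ and symmetrically $(B'+H)\setminus B\subseteq\gamma-A$. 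Then Proposition \ref{AB_dual} gives $\overline A=-B+\overline{A+B}=(-B+\overline{A'+B'})\cup(\gamma-B)$ with $-B+\overline{A'+B'}$ $H$-periodic. A case split on whether $\alpha\in A+H$ and $\beta\in B+H$ closes things out: the mixed case forces one of $A,B$ to be $H$-periodic (making $\gamma-B$ or $\gamma-A$ $H$-periodic, hence $\overline A$ or $\overline B$ $H$-periodic) while retaining the hole $\alpha$ or $\beta$, a contradiction; the both-absent case is ruled out by hole-count ($h_A+h_B=1-|H|<0$); and the both-present case, where $h_A+h_B=|H|+1$, succumbs to further free-placement moves $\tilde A=A\cup\{\alpha^*\}$, $\tilde B=B\cup\{\beta^*\}$ with distinct holes, forcing $\alpha^*+\beta^*\in A+B\cup\{\gamma\}$ and ultimately clashing with $|A+B|=|A|+|B|$.

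For Part 2, Part 1 permits KST on $(A',B')$, producing Kemperman decompositions $A'=A'_1\cup A'_0$ and $B'=B'_1\cup B'_0$ with common maximal quasi-period $H_a$. I would then distribute by KST type: type (IV) is excluded by Part 1; type (III) is ruled out because its punctured-coset structure $A'_0+B'_0=g_0+H_a$ permits a non-trivial extension of $(A,B)$, violating non-extendibility; type (II) yields case (A), with KST condition (iii) together with the requirement that $\gamma=\alpha+b_0=a_0+\beta$ be a unique expression localizing $\alpha,\beta$ to $A'_0,B'_0$, forcing both to be the second term of their progression with lengths at least three except possibly in one slot; type (I) with $|A'_0|=|B'_0|=1$ descends via KST applied to $(\phi_a(A'),\phi_a(B'))$ and yields case (C), with the $|H_a|=2$ boundary handled by hand since the first and second terms of a length-two progression coincide under sign reversal of the difference. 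All remaining configurations (type (I) with one aperiodic part larger than $1$) reduce, by iterating KST together with Proposition \ref{AB_dual}, to a Theorem \ref{KST_Step_Beyond} type (V) pair sitting in the aperiodic part of some quasi-periodic decomposition, which is exactly case (B). The main obstacle I anticipate is this recursive type (I) descent, in particular verifying that Proposition \ref{AB_dual}'s duality aligns case (B) with a genuinely non-extendible Theorem \ref{KST_Step_Beyond} type (V) pair, together with the delicate $|H_a|=2$ special case in (C).
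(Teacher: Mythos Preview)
Your high-level plan (rule out the periodic-without-unique-expression case, then distribute over KST types) matches the paper, but your type-to-case mapping in Part 2 contains a genuine error.

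Type (IV) is \emph{not} excluded by Part 1. Part 1 rules out $A'+B'$ being periodic without a unique expression element, but a type (IV) pair has $A'+B'$ equal to a \emph{punctured} periodic set, which is aperiodic. In the paper's proof, type (IV) is precisely what yields case (B): when $\alpha\in A'_0$ and $\beta\in B'_0$ with $(A'_0,B'_0)$ of type (IV), setting $A_0=A'_0\setminus\alpha$ and $B_0=B'_0\setminus\beta$ gives $|A_0+B_0|=|H_a|-2$, and Proposition \ref{AB_dual} applied inside $H_a$ identifies $(A_0,B_0)$ as the dual of a non-extendible type (V) pair. Your claim that case (B) instead arises from ``type (I) with one aperiodic part larger than $1$'' is therefore also wrong: those configurations are eliminated, not sent to (B).

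The case division you are missing is not by KST type alone but by whether $\alpha\in A'_0$ or $\alpha\in A'_1$ (and symmetrically for $\beta$). Non-extendibility gives $|N_1^\beta(A',B')|=|N_1^\alpha(B',A')|=0$, so $\gamma$ has exactly two representations $\alpha+\beta'$ and $\alpha'+\beta$. When $\alpha\in A'_0$ and $\beta\in B'_0$, this immediately excludes type (I), gives (A) for type (II), (B) for type (IV), and excludes type (III) by a counting argument on elements $x\in(A_0+H_a)\cap\overline{A_0}$ with $\gamma\notin x+B_0$. When $\alpha\in A'_1$, the paper uses Proposition \ref{mult_result} (for $|H_a|>2$) to force $|B'_0|=1$, hence type (I) with both aperiodic parts of size one, and then descends modulo $H_a$ to obtain (C); the $|H_a|=2$ subcase is handled separately. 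So type (I) never survives with one aperiodic part larger than $1$.

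Your Part 1 sketch via Proposition \ref{AB_dual} and free placement is more elaborate than the paper's route, which applies Proposition \ref{mult_result} inside $H_a$ to find cosets $A_{a_0},B_{b_0}$ carrying at least $|H_a|$ of the $|H_a|+1$ holes, argues the last hole must lie there too (else non-extendibility fails), and then recycles the type (III) counting argument; your approach may close, but the paper's is shorter.
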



\begin{proof}
Since $(A,B)$ is non-extendible and (\ref{Kemp_extendible}) holds,
it follows that ($A'+B')\setminus \gamma=A+B$, with $\gamma\in
A'+B'$. Furthermore, we may assume that
$|N_1^\beta(A',B')|=|N_1^\alpha(B',A')|=0$, since otherwise the
non-extendibility of $(A,B)$ is contradicted by either the critical
pair $(A',B)$ or the critical pair $(A,B')$. Thus $\gamma$ must have
exactly two distinct representations $\alpha+\beta'$ and
$\alpha'+\beta$ in $A'+B'$.

Suppose $\alpha\in A'_0$ and $\beta\in B'_0$ (we assume for the
moment that $A'_1\cup A'_0$ and $B'_1\cup B'_0$ exist, which will be
shown to be the case in a following paragraph by similar arguments),
and let $A_0=A'_0\setminus \alpha$ and $B_0=B'_0\setminus \beta$.
Hence, since $|N_1^\beta(A',B')|=|N_1^\alpha(B',A')|=0$, it follows
that $(A',B')$ cannot have type (I). If we have type (II), then
$A'_0$ and $B'_0$ are arithmetic progressions with common difference
$d$, and by choosing an appropriate sign for $d$, it follows that
both $\alpha$ and $\beta$ must be the second term in their
respective progression. Furthermore,
$|N_1^\beta(A',B')|=|N_1^\alpha(B',A')|=0$ implies that
$|A'_0|,\,|B'_0|\geq 3$, and KST implies that $|\langle
d\rangle|\geq |A'_0|+|B'_0|-1$. Additionally, $|A'_0|=|B'_0|=3$
cannot hold, else $|(A'+B')\setminus (A+B)|\geq 2$, a contradiction.
Thus (A) holds.

If we have type (IV), then $|A_0+B_0|=|H_a|-2$, whence in view of
Proposition \ref{AB_dual} it follows that $(A_0,B_0)$ is just the
result of applying Proposition \ref{AB_dual} with $G=H_a$ to a
non-extendible type (V) pair from the group $H_a$, and translating
appropriately (note type (VI) and (VII) pairs have a similar dual
relationship, as do type (I) and (IV) pairs). Thus (B) holds. If we
have type (III), then $\db(A+B,\mathcal{P})=1$ and
$|A_0|+|B_0|=|H_a|-1$, with both $A_0$ and $B_0$ contained in an
$H_a$-coset. Note that there are exactly $|H_a|-|B_0|$ elements
$x\in A_0+H_a$ such that $\gamma\notin x+B_0$. Since
$|A_0|<|H_a|-|B_0|$, it follows that there must be at least one such
$x\in \overline{A_0}\cap (A_0+H_a)$, whence adding $x$ contradicts
the non-extendibility of $(A,B)$. Thus type (III) cannot occur for
$(A',B')$ in this situation.

Next suppose $A'+B'$ is periodic without a unique expression
element. Thus $A+B\cup\{\gamma\}$ is periodic with maximal period
$H_a$, and there are exactly $|H_a|+1$ holes in $A$ and $B$. Since
$\gamma\notin A+B$, it follows in view of Proposition
\ref{mult_result} applied with $G=H_a$ that there must exist $a_0\in
A$ and $b_0\in B$ such that $A_{a_0}$ and $B_{b_0}$ contain at least
$|H_a|$ holes with $\gamma\in A_{a_0}+B_{b_0}+H_a$. Thus if the last
remaining hole is not also in either $A_{a_0}$ or $B_{b_0}$, then
adding it to either $A$ or $B$ will contradict the non-extendibility
of $(A,B)$. Hence $|A_{a_0}|+|B_{b_0}|=|H_a|-1$, whence the
arguments used in the previous paragraph for type (III) show that
the pair $(A,B)$ is extendible. Thus $A'+B'$ cannot be periodic
without a unique expression element. Thus, in view of previous
cases, we may suppose $\alpha\in A'_1$ or $\beta\in B'_1$, and
w.l.o.g. assume the former.

Suppose $|H_a|=2$. Thus $\db(A+B,\mathcal{P})\leq 2$ and either
$|A'_0|=1$ or $|B'_0|=1$. Hence $(A',B')$ has type (I), whence
$|N_1^\beta(A,B)|=0$ implies $\beta\in B'_1$ as well. Since $\gamma$
has exactly two representations in $A'+B'$, since every element of
$A'_0+B'_0$ is a unique expression element, and since
$\phi_a(A'_0)+\phi_a(B'_0)$ is a unique expression element, it
follows that $\phi_a(A'_0)+\phi_a(B'_0)\neq \phi_a(\gamma)$. Hence,
since $\gamma\notin A+B$, it follows that
$\phi_a(A'_0)+\phi_a(B'_\beta)$, $\phi_a(B'_0)+\phi_a(A'_\alpha)$,
and $\phi_a(A'_{\alpha})+\phi_a(B'_{\beta})$ are the only possible
representations of $\phi_a(\gamma)\in \phi_a(A')+\phi_a(B')$.

Suppose $\phi_a(A'_{\alpha})+\phi_a(B'_{\beta})$ is a unique
expression element. Hence, since $\phi_a(A'_0)+\phi_a(B'_0)$ is also
a unique expression element, and since $\alpha\notin A'_0$ and
$\beta\notin B'_0$, it follows that $(\phi_a(A'),\phi_a(B'))$ must
have type (II) with (by choosing the appropriate sign for the
progression) $\phi_a(A'_0)$ the first term in the arithmetic
progression and $\phi_a(A'_{\alpha})$ the last term in the
arithmetic progression, and the same holding true for $\phi_a(B'_0)$
and $\phi_a(B'_{\beta})$. Since
$\phi_a(A'_{\alpha})+\phi_a(B'_{\beta})$ is a unique expression
element, and since $|H_a|=2$, it follows that $(A'_\alpha\setminus
\alpha)+(B'_\beta\setminus\beta)$ must be missing an element from
the coset $\alpha+\beta+H_a$, which must then be $\gamma$ (since
deleting $\alpha$ and $\beta$ removes only the single element
$\gamma$ from $A'+B'$). Hence
$\phi_a(A'_\alpha)+\phi_a(B'_\beta)=\phi_a(\gamma)$. Thus, since
$\phi_a(A'_\alpha)+\phi_a(B'_\beta)=\phi_a(\gamma)$ is a unique
expression element, it follows that $|A'_0|=|B'_0|=1$, else adding
the other element from the $H_a$-coset either to $A'_0$ or $B'_0$
will contradict the non-extendibility of $(A,B)$. Furthermore, if
there are only two terms in each progression, then it follows that
$(A'_0+(B'_\beta\setminus\beta))\cup (B'_0+(A'_\alpha\setminus
\alpha))$ is an $H_a$-coset, else $A+B$ will be missing an element
besides $\gamma$ from the coset
$A'_0+B'_\beta+H_a=B'_0+A'_\alpha+H_a$, contradicting that
$(A'\setminus \alpha)+(B'\setminus \beta)=(A'+B')\setminus \gamma$.
Hence $A'_0+\beta\neq B'_0+\alpha$ in this case. Thus (C) holds. So
next assume $\phi_a(A'_{\alpha})+\phi_a(B'_{\beta})$ is not a unique
expression element.

If $\phi_a(A'_\alpha)+\phi_a(B'_\beta)=\phi_a(\gamma)$, then there
will be two representations of $\gamma$ contained in
$A'_\alpha+B'_\beta$ (since $\alpha\in A'_1$ and $\beta\in B'_1$),
whence $\phi_a(A'_\alpha)+\phi_a(B'_\beta)$ not a unique expression
element implies that $\gamma$ must have at least one more
representation in $A'+B'$ (since $\phi_a(A'_0)+\phi_a(B'_0)$ is a
unique expression element, and hence not equal to
$\phi_a(A'_\alpha)+\phi_a(B'_\beta)=\phi_a(\gamma)$, and since all
other pairs $\phi_a(b_1)\in \phi_a(A')$ and $\phi_a(b_2)\in
\phi_a(B')$ have either $b_1+H_a\subseteq A'$ or $b_2+H_a\subseteq
B'$), a contradiction. Therefore
$\phi_a(A'_\alpha)+\phi_a(B'_\beta)\neq \phi_a(\gamma)$.

Thus, since $\gamma$ has exactly two representations in $A'+B'$
given by $\alpha+\beta'$ and $\alpha'+\beta$, and since
$\phi_a(A'_0)+\phi_a(B'_\beta)$, $\phi_a(B'_0)+\phi_a(A'_\alpha)$,
and $\phi_a(A'_{\alpha})+\phi_a(B'_{\beta})$ are the only possible
representations of $\phi_a(\gamma)\in \phi_a(A')+\phi_a(B')$, it
follows  that
$\phi_a(A'_0)+\phi_a(B'_{\beta})=\phi_a(B'_0)+\phi_a(A'_{\alpha})=\phi_a(\gamma)$
is an element of $\phi_a(A')+\phi_a(B')$ with exactly two
representations, and that $|A'_0|=|B'_0|=1$. From KST it follows, as
remarked in Section 2, that if $(A',B')$ has type (I) with
$|A'_0|=1$, then $(\phi_a(A'),\phi_a(B'))$ cannot have type (I) with
the aperiodic part of $\phi_a(A')$ having cardinality one.

Let $H_d/H_a$ be the quasi-period from KST applied modulo $H_a$ to
$(\phi_a(A'),\phi_a(B'))$. Hence, since
$|\phi_a(A')_{\phi_a(b),H_d/H_a}|\geq 2$ for all $b\in A'$ (in view
of the previous paragraph), it follows that
$$|(\phi_a(\beta)+H_d/H_a)\cap (\phi_a(\gamma)-\phi_a(A'))|\geq 2.$$
Thus, since
$\phi_a(A'_0)+\phi_a(B'_{\beta})=\phi_a(B'_0)+\phi_a(A'_{\alpha})=\phi_a(\gamma)$
is an element of $\phi_a(A')+\phi_a(B')$ with exactly two
representations, it follows that either $\phi_a(B'_{\beta})$ and
$\phi_a(B'_0)$ are contained in the same $H_d/H_a$-coset, or else
$\phi_a(B'_\beta)$ must be contained in the $H_d/H_a$-coset
corresponding to the aperiodic part of the Kemperman decomposition.
However, since $\phi_a(B'_0)$ must be contained in the
$H_d/H_a$-coset corresponding to the aperiodic part of the Kemperman
decomposition (a consequence of KST(i)(iii)), it follows that
$\phi_a(B'_{\beta})$ is also contained in the aperiodic part the
Kemperman decomposition modulo $H_a$ in the former case as well.
Likewise for $\phi_a(A'_{\alpha})$.

If $(\phi_a(A'),\phi_a(B'))$ has type (II), then (by appropriate
choice of sign), $\phi_a(A'_0)$ is the first term in the arithmetic
progression, and $\phi_a(A'_{\alpha})$ is the second term, with the
same true of $\phi_a(B'_0)$ and $\phi_a(B'_{\beta})$ (in view of the
previous paragraph, and since
$\phi_a(A'_\alpha)+\phi_a(B'_0)=\phi_a(A'_0)+\phi_a(B'_\beta)$ is an
element with exactly two distinct representations in
$\phi_a(A')+\phi_a(B')$). Furthermore,
$\phi_a(A'_\alpha)+\phi_a(B'_\beta)$ not a unique expression element
implies that $\phi_a(A'_\alpha)$ and $\phi_a(B'_\beta)$ cannot also
both be end terms, whence there must be at least three terms in one
of the progressions. Additionally, since
$\phi_a(B'_0)+\phi_a(A'_\alpha)=\phi_a(A'_0)+\phi_a(B'_\beta)=\phi_a(\gamma)$,
it follows that $\gamma\notin (A'_0+(B'_\beta\setminus\beta))\cup
(B'_0+(A'_\alpha\setminus\alpha))$, whence
$A'_0+\beta=B'_0+\alpha=\gamma$. Thus (C) holds.

Note $(\phi_a(A'),\phi_a(B'))$ cannot have type (IV), nor, as
remarked earlier, type (I). If instead $(\phi_a(A'),\phi_a(B'))$ has
type (III), then
$\phi_a(A'_0)+\phi_a(B'_{\beta})=\phi_a(B'_0)+\phi_a(A'_{\alpha})$
being an element of $\phi_a(A')+\phi_a(B')$ with exactly two
representations would imply $\phi_a(A'_0)+\phi_a(B'_{\beta})$ was a
unique expression element in $\phi_a(A')+\phi_a(B'\setminus B'_0)$.
Since a type (III) pair has exactly one unique expression element,
it follows that $|\phi_a(A'_0)|,\,|\phi_a(B'_0)|\geq 3$. Hence,
since $\phi_a(A'_0)+\phi_a(B'_0)$ is the unique expression element
in $\phi_a(A')+\phi_a(B')$, it follows from KST that
$(\phi_a(A'),\phi_a(B'\setminus B'_0))$ either has type (IV), whence
there cannot be any unique expression element, a contradiction, or
else type (II). In the latter case, then
$\phi_a(A'_0)+\phi_a(B'_{\beta})$ a unique expression element in
$(\phi_a(A')+\phi_a(B'\setminus B'_0))$ implies that $\phi_a(A'_0)$
is an end term of the arithmetic progression, whence
$\phi_a(A'_0)+\phi_a(B'_0)$ a unique expression element in
$\phi_a(A')+\phi_a(B')$ implies that $\phi_a(B'_0)$ must follow
directly after/before the end term of the progression
$\phi_a(B'\setminus B'_0)$, whence $(\phi_a(A'),\phi_a(B'))$ has
type (II), a contradiction. Thus type (III) cannot occur for
$(\phi_a(A'),\phi_a(B'))$ in this situation.

Finally, assume instead $|H_a|>2$. Since $\alpha\in A'_1$, then from
Proposition \ref{mult_result}, it follows that
$(A'_{\alpha}\setminus \alpha)+B_{b_i}$ will be $H_a$-periodic for
all $|B_{b_i}|\geq 2$. Thus $\gamma\notin A+B$ and $|H_a|>2$ imply
that $\phi_a(A'_\alpha)+\phi_a(B'_0)=\phi_a(\gamma)$, and that
either $|B'_0|=1$ or else $|B'_0|=2$ and $\beta\in B'_0$. However,
in the latter case, $|B'_0|=2$ implies that type (I) or (II) must
hold (since type (III) and (IV) both imply that each of the
aperiodic parts of the Kemperman decompositions have cardinality at
least three), whence $|B'_0|=2$ further implies that
$|N_1^b(A,B)|>0$ for all $b\in B'_0$, contradicting that
$|N_1^\beta(A,B)|=0$. Therefore we can assume $|B_0|=1$, whence we
must have type (I). Hence $|N_1^\beta(A,B)|=0$ implies that $\beta
\in B_1$ also. Applying the same arguments with the roles of $A$ and
$B$ reversed, it follows that $|B_0|=|A_0|=1$ with
$\phi_a(A'_0)+\phi_a(B'_{\beta})=\phi_a(B'_0)+\phi_a(A'_{\alpha})=\phi_a(\gamma)$
an element of $\phi_a(A'+B')$ with exactly two representations
(since $|\nu_\gamma(A',B')|=2)$, whence the arguments from the
previous three paragraphs apply for determining the structure modulo
$H_a$, with the exception that the arithmetic progressions from KST
are allowed to both have length two, yielding (C), and completing
the proof.\end{proof}

We remark that the sufficiency of the examples (A-C) is easily
checked, as is the sufficiency for types (V-VIII) (in view of
Proposition \ref{mult_result}, Lemma \ref{kemp_Lemma_qp}, and KST).
Thus together, along with the description provided for extendible
pairs $(A,B)$, they may be taken as necessary and sufficient
conditions for $|A+B|=|A|+|B|$. Simple consequences of the above
description and Theorem \ref{KST_Step_Beyond} are the following two
immediate corollaries.

\begin{cor}\label{cor1} Let $A$ and $B$ be finite, nonempty, and non-extendible
subsets of an abelian group $G$ such that $|A+B|=|A|+|B|$. If $A$
and $B$ are non-quasi-periodic, generating subsets such that
$|A|,\,|B|,|\overline{A+B}|\geq 3$, with equality holding for at
most one of the three, then
$$\db(A+B,\mathcal{S})=\db(\overline{A+B},\mathcal{S})=1,\mbox{ and}$$
$$\db(B,\mathcal{S})=
\db(\overline{B},\mathcal{S})=\db(A,\mathcal{S})=\db(\overline{A},\mathcal{S})=1,$$
where either $\mathcal{S}=\mathcal{QP}_H$, for a nontrivial, proper
subgroup $H$, or else $\mathcal{S}=\mathcal{AP}_d$, for a nonzero
$d\in G$.\end{cor}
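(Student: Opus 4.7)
The strategy is to apply Theorem \ref{KST_Step_Beyond} to the pair $(A, B)$ and handle each resulting structural case, using the fact that complements of $\mathcal{QP}_H$-sets are themselves in $\mathcal{QP}_H$. I first note that the periodic-$A+B$ branch of Theorem \ref{KST_Step_Beyond} reduces to the extendible case: if $A + B$ were periodic with maximal period $H_a$ nontrivial, the ``else'' clause would force both $A$ and $B$ to be $H_a$-periodic, hence quasi-periodic, contradicting the hypothesis. Thus either (\ref{Kemp_extendible}) holds, or $(A, B)$ has one of Types (V)--(VIII) with $A_1 = B_1 = \emptyset$ (forced by non-quasi-periodicity), so that $A = A_0$ and $B = B_0$ each lie in a single $H_a$-coset. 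Type (V) is then excluded by $|A|, |B| \geq 3$; Type (VIII) is handled directly by the conclusion already stated in Theorem \ref{KST_Step_Beyond} (with $\mathcal{S} = \mathcal{QP}_{H_{d_1}}$); and Types (VI), (VII) have fully explicit internal structure inside the $H_a$-cosets, so a direct check---using Proposition \ref{AB_dual} to pin down $\overline{A+B}$ via the dual pair $(-A, \overline{A+B})$, and using the non-extendibility and generating hypotheses to restrict $|H_a|$ to the narrow range making the conclusion possible---shows each of the six sets lies within one hole of $\mathcal{QP}_{H_a}$.

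The bulk of the argument is Case (\ref{Kemp_extendible}). Let $(A', B') = (A \cup \{\alpha\}, B \cup \{\beta\})$; by the discussion preceding Corollary \ref{nec-suff-cond}, this is a KST-critical pair with $A' + B' = (A + B) \cup \{\gamma\}$ for a single $\gamma$. Apply KST to $(A', B')$ to obtain Kemperman decompositions with maximal common quasi-period $H$. Both $A'$ and $B'$ belong to $\mathcal{QP}_H$ by the decomposition, and $A' + B' \in \mathcal{QP}_H$ because its periodic part $(A'_1 + B') \cup (A' + B'_1)$ is $H$-periodic and its aperiodic part $A'_0 + B'_0$ lies in a single $H$-coset. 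Since complements of $\mathcal{QP}_H$-sets are themselves in $\mathcal{QP}_H$ (the complement of the periodic part is $H$-periodic, and the complement of $A'_0$ inside its own coset stays inside that coset), the complements $\overline{A'}, \overline{B'}, \overline{A' + B'}$ also belong to $\mathcal{QP}_H$. Taking $\mathcal{S} = \mathcal{QP}_H$ and using that $A, B, A + B$ differ from $A', B', A' + B'$ each by a single element (and the complements differ by adding that element back), one obtains $\db(\cdot, \mathcal{S}) \leq 1$ for all six sets. When the maximal quasi-period $H = G$, the cardinality hypotheses rule out top-level KST Types (I), (III), (IV) (since Type (III) with $H = G$ forces $|\overline{A+B}| = 1$ and Type (IV) forces $|\overline{A+B}| = 2$, while Type (I) is excluded by $|A|, |B| \geq 3$), so $(A', B')$ is a top-level Type (II) AP-pair with common difference $d$, and $\mathcal{S} = \mathcal{AP}_d$ works in exactly the same way.

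The main obstacle is twofold: establishing the reverse inequality $\db(\cdot, \mathcal{S}) \geq 1$---i.e., that none of the six sets is itself already in $\mathcal{S}$---requires the full force of the non-quasi-periodicity of $A$ and $B$ combined with the ``at most one equality'' restriction among $|A|, |B|, |\overline{A+B}| \geq 3$, which blocks the boundary subcases; and handling the $H = G$ subcase inside (\ref{Kemp_extendible}) requires carefully checking that the generating hypothesis (together with the shape of the Type (II) AP-structure described in Corollary \ref{nec-suff-cond}(A)) makes $\mathcal{AP}_d$ accommodate all six sets simultaneously. A secondary technical point is forcing $|H_a|$ into a small enough range in Types (VI), (VII) so that the desired $\mathcal{QP}_{H_a}$-completion exists with a single hole.
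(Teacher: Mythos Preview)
Your overall strategy is the one the paper intends: Corollary~\ref{cor1} is declared an immediate consequence of Theorem~\ref{KST_Step_Beyond} and Corollary~\ref{nec-suff-cond}, and you correctly run the case split and locate the Kemperman decomposition of $(A',B')$ as the source of the common $\mathcal{QP}_H$ or $\mathcal{AP}_d$ structure.

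Two points need repair. First, your handling of Types (VI) and (VII) does not work. Since $A,B$ are non-quasi-periodic and generating, the decompositions from Theorem~\ref{KST_Step_Beyond} have $A_1=B_1=\emptyset$, so $A=A_0$, $B=B_0$ each sit in a single $H_a$-coset and hence $H_a=G$. Then Type (VI) gives $|A|=|B|=3$, and Type (VII) gives one of $|A|,|B|$ equal to $3$ together with $|\overline{A+B}|=|G|-|A+B|=3$; both contradict the ``at most one equality'' clause, so these types are vacuous. Your proposed $\mathcal{QP}_{H_a}$ check cannot succeed here since $H_a=G$ is not proper.

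Second, your complement argument has the containment backwards: from $\overline{A'}\in\mathcal{QP}_H$ and $\overline{A}=\overline{A'}\cup\{\alpha\}\supseteq\overline{A'}$ one cannot read off $\db(\overline{A},\mathcal{QP}_H)\le 1$, since $\db$ asks for a \emph{superset} in $\mathcal{QP}_H$. The fix is to use the case analysis of Corollary~\ref{nec-suff-cond}. After the exclusions above (and noting that case~(B) there, via $\alpha\in A'_0$ and $A$ non-quasi-periodic and generating, forces $H=G$ and hence $|\overline{A+B}|=2$, also excluded), one is either in case~(A) with $H=G$---the $\mathcal{AP}_d$ outcome, where the explicit ``$\alpha,\beta$ are second terms'' description lets you verify all six equalities directly---or in case~(C), where $|A'_0|=|B'_0|=1$ and $\alpha\in A'_1$, $\beta\in B'_1$. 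In case~(C) each of the six sets has exactly two partially filled $H$-cosets, one of which is missing a single element, giving $\db(\cdot,\mathcal{QP}_H)=1$ on the nose. For $A+B$ and $\overline{A+B}$ being non-quasi-periodic you may alternatively invoke Lemma~\ref{kemp_lem_gen2} (note $|\overline{A+B}|<\infty$ forces $G$ finite).
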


\begin{cor}\label{cor2} Let $A$ and $B$ be finite, nonempty
subsets of an abelian group $G$ such that $|A+B|=|A|+|B|$ and
$\db(A,\mathcal{QP}),\,\db(B,\mathcal{QP})\geq 2$. If
$\db(\overline{A},\mathcal{P}),\,\db(\overline{B},\mathcal{P}),\,
\db(A+B,\mathcal{P})\geq 3$, with equality holding for at most one
of the three, then for some nonzero $d\in G$,
$$\db(A+B,\mathcal{AP}_d),\,
\db(A,\mathcal{AP}_d),\,\db(B,\mathcal{AP}_d)\leq 1.$$
\end{cor}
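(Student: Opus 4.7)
The plan is to apply Theorem \ref{KST_Step_Beyond} to $(A,B)$ and reduce to the situation where the critical-pair-extension condition (\ref{Kemp_extendible}) holds; then invoke KST on the resulting critical pair $(A',B')$ and eliminate all types but Kemperman type (II), from which the AP conclusion follows.

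First I would observe that $\db(\overline{A},\mathcal{P})\geq 3$ forces $|A|\geq 3$ (take $C=G$ as the periodic superset), and similarly $|B|\geq 3$. Applying Theorem \ref{KST_Step_Beyond} to $(A,B)$: the periodic case with $A,B$ both $H_a$-periodic is immediately incompatible with $\db(A,\mathcal{QP})\geq 2$ (periodic sets are quasi-periodic); type (VIII) asserts $\db(A,\mathcal{QP}_{H_{d_i}})=1$, violating the same hypothesis; and for types (V-VII) one first deduces $A_1=B_1=\emptyset$ (otherwise the decomposition itself makes $A$ quasi-periodic), so $A=A_0$ and $B=B_0$ lie in single $H_a$-cosets with the small cardinalities $|A_0|,|B_0|\in\{2,3\}$ prescribed by those types. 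The bound $|A|,|B|\geq 3$ rules out type (V), while in types (VI) and (VII) the combination $|A|=3$ (or $|B|=3$) with $A+B$ an $H_a$-coset minus few elements forces at least two of $\db(\overline{A},\mathcal{P}),\db(\overline{B},\mathcal{P}),\db(\overline{A+B},\mathcal{P})$ to equal $3$, contradicting the ``equality at most once'' clause. Hence (\ref{Kemp_extendible}) must hold.

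Next, let $A'=A\cup\{\alpha\}$, $B'=B\cup\{\beta\}$, and apply KST to the critical pair $(A',B')$, yielding Kemperman decompositions $A'=A'_1\cup A'_0$, $B'=B'_1\cup B'_0$ with common quasi-period $H_a$ and one of types (I)-(IV). The hypothesis $\db(A,\mathcal{QP})\geq 2$ forces $A'_1=\emptyset$: if $A'_1\neq\emptyset$ then $A'$ is quasi-periodic, and since $A$ differs from $A'$ by at most one element, $\db(A,\mathcal{QP})\leq 1$, a contradiction. Similarly $B'_1=\emptyset$, so $A'=A'_0$ and $B'=B'_0$ both lie in a single $H_a$-coset. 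Now type (I) requires $|A'_0|=1$ or $|B'_0|=1$, contradicting $|A|,|B|\geq 3$. Type (II) delivers the conclusion: $A'_0,B'_0$ are arithmetic progressions with common difference $d$, hence so is $A'+B'$, and since $A,B,A+B$ differ from $A',B',A'+B'$ each by at most one element, we get $\db(A,\mathcal{AP}_d),\db(B,\mathcal{AP}_d),\db(A+B,\mathcal{AP}_d)\leq 1$. For types (III) and (IV), $A+B$ is contained in a single $H_a$-coset with at most two ``holes'' (the type III/IV hole, plus possibly one from the $\alpha,\beta$ deletions), while $A\subseteq c_A+H_a$ and $B\subseteq c_B+H_a$ lie in single $H_a$-cosets with the numbers and positions of their holes controlled by the type III/IV structure; by analyzing how these holes can be completed to unions of $H$-cosets for small subgroups $H\leq H_a$, one shows that at least two of $\db(\overline{A},\mathcal{P}),\db(\overline{B},\mathcal{P}),\db(\overline{A+B},\mathcal{P})$ must drop to $3$ or below, contradicting the ``at most one equality'' hypothesis.

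The hard part will be this final elimination of types (III) and (IV). Types (III)/(IV) make $\overline{A+B}$ close to periodic, while $A'_1=B'_1=\emptyset$ similarly constrains $\overline{A}$ and $\overline{B}$; translating these constraints into $\db$ bounds requires a careful case split according to the smallest prime factor of $|H_a|$ and the distribution of the holes of $A$, $B$, $A+B$ inside their respective cosets. The unifying principle, however, is clear: once $A$, $B$, and $A+B$ all sit inside a single $H_a$-coset with only a bounded number of omissions, their complements cannot simultaneously be far from periodic in all three instances.
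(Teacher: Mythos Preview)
Your overall strategy is right and matches what the paper intends: apply Theorem \ref{KST_Step_Beyond}, rule out types (V)--(VIII) and the periodic branch using the $\db(\cdot,\mathcal{QP})\geq 2$ hypotheses, obtain (\ref{Kemp_extendible}), then apply KST to $(A',B')$ and force type (II). Your handling of types (V)--(VIII) is correct. One small omission worth filling in: before invoking KST you should observe that $A'+B'$ is aperiodic. This is immediate, since $A+B\subseteq A'+B'$ with $|A'+B'\setminus(A+B)|\leq 1$, so $A'+B'$ periodic would give $\db(A+B,\mathcal{P})\leq 1$, contradicting the hypothesis.

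Where you go astray is in the final step, which you call ``the hard part.'' In fact types (III) and (IV) are eliminated in one line each, and your proposed case-split on prime factors of $|H_a|$ is unnecessary. First, you slip notationally: the hypothesis concerns $\db(A+B,\mathcal{P})$, not $\db(\overline{A+B},\mathcal{P})$. With that corrected, your own observation that for types (III)/(IV) ``$A+B$ is contained in a single $H_a$-coset with at most two holes'' already finishes the job: an $H_a$-coset is $H_a$-periodic (the quasi-period $H_a$ is nontrivial by definition), so $\db(A+B,\mathcal{P})\leq 2$, contradicting $\db(A+B,\mathcal{P})\geq 3$. More specifically, type (III) with $A'_1=B'_1=\emptyset$ gives $A'+B'=A'_0+B'_0$ equal to a full $H_a$-coset, hence periodic, which you have already excluded; type (IV) gives $A'+B'$ equal to a punctured $H_a$-coset, so $A+B$ misses at most two elements of that coset. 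No analysis of $\overline{A}$ or $\overline{B}$ is required for this step; the ``at most one equality'' clause is needed only for types (VI)/(VII), where you used it correctly.
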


The proof of Theorem \ref{KST_Step_Beyond} is heavily reliant upon a
series of reductions to simpler cases. An important step in the
proof will be to show that it suffices to prove Theorem
\ref{KST_Step_Beyond} when both $A$ and $B$ are non-quasi-periodic,
generating subsets. This will be accomplished principally through
the following three lemmas.


\begin{lem}\label{kemp_firstlemma} Let $A$ and $B$ be finite, nonempty
subsets of an abelian group $G$ with $|A|\geq 3$, $|A+B|=|A|+|B|$,
and $0\in A$, and let $H_a=\langle A\rangle$. If $A+B$ is aperiodic
and $B$ is non-extendible, then $B$ has a quasi-periodic
decomposition with quasi-period $H_a$.
\end{lem}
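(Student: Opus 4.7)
The strategy is to prove the contrapositive: assuming $B$ meets at least two cosets of $H_a$ each in a nonempty but proper subset, I derive a contradiction. Since $0\in A$ and $H_a=\langle A\rangle$, $A\subseteq H_a$, so under $\phi_a\colon G\to G/H_a$ we have $\phi_a(A+B)=\phi_a(B)$. For each coset representative $g$ with $B_g:=B\cap(g+H_a)\neq\emptyset$, set $S_g=A+B_g=(A+B)\cap(g+H_a)$, and write $X_g=(g+H_a)\setminus B_g$ and $W_g=(g+H_a)\setminus S_g$ for the coset-level ``hole'' sets in $B$ and in $A+B$. Let $\mathcal{G}$ be the collection of $g$ with $X_g\neq\emptyset$; the conclusion follows once I show $|\mathcal{G}|\leq 1$.

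A counting identity comes first: comparing $|(A+B)+H_a|=|B+H_a|=|H_a|\cdot|\phi_a(B)|$ with $|A+B|=|A|+|B|$ yields $\sum_{g\in\mathcal{G}}(|X_g|-|W_g|)=|A|$. The key structural input is non-extendibility: by Proposition \ref{AB_dual}, $\overline{B}=-A+\overline{A+B}$, and intersecting with the coset $g+H_a$ (using $A\subseteq H_a$) gives the local identity $X_g=-A+W_g$. This forces the stabilizers of $B_g$, $X_g$, $S_g$, $W_g$ in $H_a$ to coincide in a common subgroup $K_g\leq H_a$: $B_g$ and $X_g$, respectively $S_g$ and $W_g$, are complements in $g+H_a$ and thus share a stabilizer; the stabilizer of $B_g$ automatically stabilizes $S_g=A+B_g$; and conversely, if $k$ stabilizes $W_g$ then $X_g+k=-A+W_g+k=X_g$, so $k$ stabilizes $X_g$.

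Now apply Kneser's Theorem to $A+B_g$ in $H_a$. Since $B_g$ is $K_g$-periodic, $|B_g+K_g|=|B_g|$, and Kneser's bound gives $|X_g|-|W_g|=|S_g|-|B_g|\geq |A+K_g|-|K_g|$ for every $g\in\mathcal{G}$. Note $K_g\lneq H_a$, for otherwise $B_g$ would be $H_a$-periodic inside $g+H_a$, hence empty or full, contradicting $g\in\mathcal{G}$; combined with $0\in A$ and $\langle A\rangle=H_a$ this implies that $A$ meets at least two $K_g$-cosets, so $|A+K_g|\geq 2|K_g|$ and hence $|A+K_g|-|K_g|\geq|K_g|$. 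Together with the trivial $|A+K_g|\geq|A|$, I obtain $|X_g|-|W_g|\geq\max(|K_g|,\,|A|-|K_g|)\geq\lceil|A|/2\rceil$. Summing over $\mathcal{G}$ yields $|A|\geq|\mathcal{G}|\cdot\lceil|A|/2\rceil$, and since $|A|\geq 3$ this forces $|\mathcal{G}|\leq 2$, with $|\mathcal{G}|=2$ only possible when $|A|$ is even and every intermediate inequality is tight.

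For the equality case $|\mathcal{G}|=2$, tightness everywhere yields $|A+K_g|-|K_g|=|A|/2$ and $|K_g|=|A|/2\geq 2$, so $|A+K_g|=|A|$ and thus $A+K_g=A$, i.e., $A$ is $K_g$-periodic with $K_g$ a nontrivial subgroup of $H_a$. But then $A+B=A+K_g+B=(A+B)+K_g$ is $K_g$-periodic, contradicting the aperiodicity of $A+B$. Hence $|\mathcal{G}|\leq 1$, which means $B$ admits an $H_a$-quasi-periodic decomposition. The main obstacle is the non-extendibility step forcing $\mathrm{Stab}(B_g)=\mathrm{Stab}(A+B_g)$; without this equality Kneser would be applied with two possibly different stabilizers, weakening the bound on $|X_g|-|W_g|$ and losing both the coarse bound on $|\mathcal{G}|$ and the clean equality-case contradiction.
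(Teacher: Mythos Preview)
Your proof is correct and follows the same overall scheme as the paper's --- decompose $B$ into $H_a$-coset slices, apply Kneser's Theorem slice by slice, and use non-extendibility together with aperiodicity of $A+B$ to rule out two or more partially filled slices --- but the technical execution differs in a way worth noting.

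The paper uses non-extendibility only in its weakest form, to conclude that $A+B_{b_i}\neq b_i+H_a$ (your $W_g\neq\emptyset$), and then carries along the $H_{a_i}$-hole counts $\rho_i$ in both $A$ and $B_{b_i}$ when applying Kneser. This leads to a short chain of inequalities forcing first $\rho_2=0$ (whence $A$ is $H_{a_2}$-periodic, so $H_{a_2}$ is trivial by aperiodicity of $A+B$), then $H_{a_1}$ trivial, and finally $|A|\leq 2$. You instead invoke Proposition~\ref{AB_dual} in full strength to obtain the local identity $X_g=-A+W_g$, from which the equality $\mathrm{Stab}(B_g)=\mathrm{Stab}(A+B_g)=K_g$ follows; this kills the $B_g$-hole contribution outright, giving the clean uniform bound $|S_g|-|B_g|\geq|A+K_g|-|K_g|\geq\lceil|A|/2\rceil$ and a one-line summation, with the boundary case $|\mathcal{G}|=2$ disposed of exactly as in the paper (via $A$ being $K_g$-periodic). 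Your route is a bit more conceptual and arguably tidier; the paper's is marginally more elementary in that it never appeals to the duality $\overline{B}=-A+\overline{A+B}$ beyond its weakest consequence.
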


\begin{proof}
Let $B'$ be the maximal subset of $B$ that is $H_a$-periodic, and
let $B\setminus B'=B_{b_1}\cup\ldots\cup B_{b_l}$ be an $H_a$-coset
decomposition of $B\setminus B'$. From the maximality of $B'$ it
follows that no $B_{b_i}$ is $H_a$-periodic. Hence, since $B$ is
non-extendible, it follows that
\begin{equation}\label{kemp_lemma1_3}A+B_{b_i}\neq b_i+H_a,\end{equation}
for all $i$. Since $|A+B|=|A|+|B|$, since $B'$ is $H_a$-periodic,
and since $|\phi_a(A)|=1$, it follows that
\begin{equation}\label{kemp_lemma1_2}|A+(B\setminus B')|=|A|+|B\setminus B'|.\end{equation}
If the lemma is false, then $l\geq 2$. Thus $$|A+(B\setminus
B')|=|\cup_{i=1}^{l} A+B_{b_i}|\geq
|A+B_{b_1}|+|A+B_{b_2}|+|B\setminus \{B'\cup B_{b_1}\cup
B_{b_2}\}|,$$ implying in view of (\ref{kemp_lemma1_2}) that
\begin{equation}\label{kemp_lemma1_1}|A+B_{b_1}|+|A+B_{b_2}|=
|A+(B_{b_1}\cup B_{b_2})|\leq |A|+|B_{b_1}|+|B_{b_2}|.\end{equation}
In view of Kneser's Theorem, it follows that \be\label{lem1-1}
|A+B_{b_i}|\geq |A|+|B_{b_i}|-|H_{a_i}|+\rho_i,\ee where $A+B_{b_i}$
is maximally $H_{a_i}$-periodic and $\rho_i$ is the number of
$H_{a_i}$-holes in $A$ and $B_{b_i}$. Since $H_a=\langle A\rangle$,
it follows in view of (\ref{kemp_lemma1_3}) that
$|\phi_{a_i}(A)|\geq 2$, whence \be\label{lem1-2}|A|\geq
|\phi_{a_i}(A)||H_{a_i}|-\rho_i\geq 2|H_{a_i}|-\rho_i.\ee Combining
(\ref{lem1-1}) and (\ref{kemp_lemma1_1}), and w.l.o.g. assuming
$|H_{a_1}|\geq |H_{a_2}|$, it follows that \be\label{onemore}|A|\leq
|H_{a_1}|+|H_{a_2}|-\rho_1-\rho_2\leq 2|H_{a_1}|-\rho_1-\rho_2.\ee
Thus in view of (\ref{lem1-2}) applied with $i=1$, it follows that
$\rho_2=0$. Hence, since $A$ is aperiodic (else $A+B$ is periodic),
it follows that $|H_{a_2}|=1$, whence (\ref{onemore}) and
(\ref{lem1-2}) imply $|H_{a_1}|=1$ also. Thus (\ref{onemore})
implies $|A|\leq 2$, a final contradiction.\end{proof}


\begin{lem}\label{kemp_Lemma_qp}Let $A$ and $B$ be finite, nonempty
subsets of an abelian group $G$ with $A+B$ aperiodic and $(A,B)$
non-extendible, and let $A=A_1\cup A_0$ be a quasi-periodic
decomposition with $A_1$ nonempty and periodic with maximal period
$H_a$. If $|A+B|=|A|+|B|$, then $B$ has a quasi-periodic
decomposition $B=B_1\cup B_0$ with quasi-period $H_a$, such that:

(i) $|\nu_c(\phi_a(A),\phi_a(B))|=1$, where $c=\phi_a(A_0+B_0)$

(ii) $|\phi_a(A+B)|=|\phi_a(A)|+|\phi_a(B)|-1$,

(iii) $|A_0+B_0|=|A_0|+|B_0|$.
\end{lem}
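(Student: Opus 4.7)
My plan is to let $B_1$ be the maximal $H_a$-periodic subset of $B$ and write $B \setminus B_1 = B_{b_1} \cup \cdots \cup B_{b_l}$ as an $H_a$-coset decomposition into partial cosets; establishing $l = 1$ will then yield the desired decomposition $B = B_1 \cup B_0$ with $B_0 := B_{b_1}$, and (i)--(iii) will follow directly from the resulting size equation. Note $l \geq 1$, since otherwise $B = B_1$ is $H_a$-periodic, forcing $A+B$ to be $H_a$-periodic (as both $A_1 + B$ and $A_0 + B$ are then $H_a$-periodic)---contradicting the aperiodicity of $A+B$ together with nontriviality of $H_a$.

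My first task is to extract from the non-extendibility of $B$ the following two consequences, for each $i$: (a) $\bar a_0 + \bar b_i \notin \phi_a(A_1) + \bar B$, and (b) $|A_0 + B_{b_i}| < |H_a|$, where $\bar a_0 := \phi_a(A_0)$, $\bar b_i := \phi_a(B_{b_i})$, and $\bar B := \phi_a(B)$. Indeed, under either failure the coset $\bar a_0 + \bar b_i$ is fully contained in $A+B$---from $A_1 + B$ in case (a), and from $A_0 + B_{b_i}$ itself in case (b)---so any $\beta \in (b_i + H_a) \setminus B_{b_i}$ satisfies $A + \beta \subseteq A+B$: $A_1 + \beta$ lands in cosets of $\phi_a(A_1) + \bar B$ that are fully covered by $A_1 + B$, while $A_0 + \beta$ lands in the full coset at $\bar a_0 + \bar b_i$; this contradicts non-extendibility.

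Next I would set up the key counting identity. By (a) the only contribution to $A+B$ in the coset $\bar a_0 + \bar b_i$ is $A_0 + B_{b_i}$, while the remaining $H_a$-cosets of $\bar A + \bar B$ are fully covered by $A_1 + B$ or $A_0 + B_1$:
$$|A+B| = |F_0|\cdot |H_a| + \sum_{i=1}^{l} |A_0+B_{b_i}|, \qquad F_0 := (\phi_a(A_1)+\bar B) \cup (\bar a_0 + \phi_a(B_1)),$$
with $\bar A + \bar B = F_0 \sqcup \{\bar a_0+\bar b_i\}_{i=1}^{l}$ disjoint. Writing $|A_1| = k|H_a|$ and $|B_1| = m|H_a|$, equating $|A+B| = |A|+|B|$ rearranges to
$$(|F_0|-k-m)\,|H_a| = |A_0| - \sum_{i=1}^{l}\bigl(|A_0+B_{b_i}|-|B_{b_i}|\bigr).$$
Since each summand on the right is nonnegative and $|A_0| < |H_a|$ (because $A_0$ is a proper subset of its $H_a$-coset, else $A$ would be $H_a$-periodic and $A+B$ too), we get $|F_0| \leq k+m$. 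For the reverse inequality I would apply Kneser's theorem to $\phi_a(A_1)$ and $\bar B$ in $G/H_a$, using $|F_0| \geq |\phi_a(A_1)+\bar B|$ by construction: provided $\phi_a(A_1)+\bar B$ is aperiodic in $G/H_a$, Kneser yields $|\phi_a(A_1)+\bar B| \geq |\phi_a(A_1)|+|\bar B|-1 = k+m+l-1$, and combining with $|F_0| \leq k+m$ forces $l \leq 1$, hence $l = 1$.

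Setting $B_0 = B_{b_1}$, the identity above reads $|A_0+B_0| = |A_0|+|B_0|$, giving (iii); $|\bar A+\bar B| = |F_0|+1 = k+m+1 = |\bar A|+|\bar B|-1$ gives (ii); and (i) follows because (a) rules out any alternative representation $\bar a'+\bar b' = \bar a_0+\bar b_1$ with $\bar a' \in \phi_a(A_1)$ (it would place $\bar a_0+\bar b_1$ in $\phi_a(A_1)+\bar B$), while the only representation with $\bar a' = \bar a_0$ is $\bar b' = \bar b_1$. The principal obstacle is securing the Kneser step when $\phi_a(A_1)+\bar B$ has a nontrivial maximal period $K'$ in $G/H_a$; in that case the bound weakens to $k+m+l-|K'|$ plus a holes contribution, and the aperiodicity of $\phi_a(A_1)$ (from maximality of $H_a$ as the period of $A_1$) together with non-extendibility---possibly applied after passing to the quotient by $\phi_a^{-1}(K')$---must be invoked to still conclude $l \leq 1$.
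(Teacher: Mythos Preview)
Your approach is correct and close in spirit to the paper's, but the organization differs in a useful way, and the one gap you flag is real though easily closed.

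The paper resolves your ``principal obstacle'' more directly than your sketch suggests: rather than trying to salvage the Kneser bound via a holes term, it simply observes that periodicity of $\phi_a(A_1)+\phi_a(B)$ in $G/H_a$ with nontrivial period $K'$ is impossible. Indeed, set $H'=\phi_a^{-1}(K')>H_a$; then $A_1+H'+B=A_1+B$. Since $\phi_a(A_1)$ is aperiodic (maximality of $H_a$), there is some $\alpha\in(A_1+H')\setminus A_1$, and one can choose such $\alpha\notin A$ (if $\phi_a(\alpha)=\bar a_0$, pick $\alpha$ in the $A_0$-coset outside $A_0$, which is proper since $A+B$ is aperiodic). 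Then $\alpha+B\subseteq A_1+H'+B=A_1+B\subseteq A+B$, contradicting non-extendibility of $A$. So $\phi_a(A_1)+\phi_a(B)$ is aperiodic and the Kneser bound $|\phi_a(A_1)+\bar B|\ge k+m+l-1$ holds outright. Your proposed route through the quotient by $H'$ would work too but is more circuitous.

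Where your argument genuinely streamlines the paper's is in ruling out $l\ge 2$. The paper, after establishing $|\phi_a(A_1)+\phi_a(B)|=k+m+l-1$ (and separately that $A_0+B'\subseteq A_1+B$), applies Kneser \emph{again} inside each $H_a$-coset to $A_0+B_{b_i}$, using that the maximal period $H_{a_i}$ of $A_0+B_{b_i}$ is a proper subgroup of $H_a$ (so $|H_{a_i}|\le|H_a|/2$), and then a somewhat delicate chain of inequalities forces $|A+B|\ge|A|+|B|+|A_0|$. Your counting identity $(|F_0|-k-m)|H_a|=|A_0|-\sum(|A_0+B_{b_i}|-|B_{b_i}|)<|H_a|$ gives $|F_0|\le k+m$ directly, and combining with $|F_0|\ge|\phi_a(A_1)+\bar B|\ge k+m+l-1$ yields $l\le 1$ with no second Kneser step. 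The same identity with $l=1$ and $|F_0|=k+m$ then gives (iii) immediately, and (i)--(ii) follow as you wrote. This is cleaner than the paper's endgame.
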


\begin{proof} 

Let $B'$ be the maximal subset of $B$ that is $H_a$-periodic, and
let $B\setminus B'=B_{b_1}\cup\ldots\cup B_{b_l}$ be an $H_a$-coset
decomposition of $B\setminus B'$. From the maximality of $B'$ it
follows that no $B_{b_i}$ is $H_a$-periodic. Hence, since $B$ is
non-extendible, it follows, for all $i$, that
\begin{equation}\label{kemp_lemma2_1}A_0+B_{b_i}\neq b_i+H_a.\end{equation}

If $|\phi_a(A_1)+\phi_a(B)|<|\phi_a(A_1)|+|\phi_a(B)|-1$, then from
Kneser's Theorem it follows that $\phi_a(A_1)+\phi_a(B)$ is
periodic, contradicting either the maximality of $H_a$ for $A_1$ or
the fact that $A$ is non-extendible. Therefore
\begin{equation}\label{transfer_lemma_1}|\phi_a(A_1)+\phi_a(B)|\geq |\phi_a(A_1)|+
|\phi_a(B)|-1.\end{equation} Since $A+B$ is aperiodic, it follows
that $|H_a||\phi_a(B)|>|B|$ and that $|A+B|\geq |A_1+B|+|A_0|$.
Hence if (\ref{transfer_lemma_1}) is strict, then
\begin{equation}\label{kemp_lem2_3}|A+B|\geq
|H_a|(|\phi_a(A_1)|+|\phi_a(B)|)+|A_0|\geq |A|+|B|+1,\end{equation}
a contradiction. Therefore we can assume
\begin{equation}\label{transfer_lemma_2}|\phi_a(A_1)+\phi_a(B)|=
|\phi_a(A_1)|+|\phi_a(B)|-1.\end{equation} Likewise, if
$A_0+B'\nsubseteq A_1+B$, then $|A+B|\geq |A_1+B|+|H_a|+|A_0|$,
whence (\ref{kemp_lem2_3}) again follows, a contradiction. So we may
assume otherwise.

From the non-extendibility of $B$, and in view of
(\ref{kemp_lemma2_1}), it follows that $A_0+B_{b_i}$ is disjoint
from $A_1+B$ for all $i$. Thus $\phi_a(A_0)+\phi_a(B_{b_i})$ is a
unique expression element for each $i$. Hence, since
$A_0+B'\subseteq A_1+B$, it follows in view of
(\ref{transfer_lemma_2}) that
\begin{equation}\label{transfer_6}|\phi_a(A+B)|=|\phi_a(A)|+|\phi_a(B)|-2+l.\end{equation}
Note (i) and (ii) along with $|A+B|=|A|+|B|$ force (iii) by a simple
counting argument. Consequently, the proof will be complete if
$l=1$. So assume $l\geq 2$.

From Kneser's Theorem applied via translation with group $H_a$, it
follows that
\begin{equation}\label{transer_3}|A_0+B_{b_i}|\geq
|A_0|+|B_{b_i}|-|H_{a_i}|,\end{equation} where $A_0+B_{b_i}$ is
maximally $H_{a_i}$-periodic, and $H_{a_i}\leq H_a$. In view of
(\ref{transfer_lemma_2}), and since each $A_0+B_{b_i}$ is disjoint
from $A_1+B$, it follows that
\begin{multline}\label{lem2-2}
|A+B|\geq
(|\phi_a(A_1)|+|\phi_a(B')|+l-1)|H_a|+\Sum{i=1}{l}|A_0+B_{b_i}|=
\\ |A|+|B'|-|A_0|+(l-1)|H_a|+ \Sum{i=1}{l}|A_0+B_{b_i}|\geq \\
|A|+|B|-|A_0|-|B_{b_1}|-|B_{b_1}|+(l-1)|H_a|+
|A_0+B_{b_1}|+|A_0+B_{b_2}|.
\end{multline}
In view of (\ref{kemp_lemma2_1}), it follows that $|H_{a_i}|\leq
\frac{1}{2}|H_a|$. Thus (\ref{transer_3}), (\ref{lem2-2}) and $l\geq
2$ together imply that $$|A+B|\geq |A|+|B|+|A_0|,$$ whence $|A_0|=0$
and $A=A_1$, contradicting that $A+B$, and thus $A$, is aperiodic.
\end{proof}


\begin{lem}\label{kemp_generate_lem}Let $A$ and $B$ be finite, nonempty
subsets of an abelian group $G$ with $|A+B|=|A|+|B|$, $0\in A\cap
B$, and $|A|,\,|B|\geq 3$. If $A+B$ is aperiodic, $(A,B)$ is
non-extendible, and neither $A$ nor $B$ is quasi-periodic, then
$\langle A\rangle =\langle B\rangle$.
\end{lem}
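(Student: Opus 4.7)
The plan is to apply Lemma \ref{kemp_firstlemma} twice, once in each direction, and use the hypothesis that neither $A$ nor $B$ is quasi-periodic to force each of them to lie in a single coset of the subgroup generated by the other.

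First I would check that Lemma \ref{kemp_firstlemma} applies to the pair $(A,B)$ as stated: we have $0\in A$, $|A|\geq 3$, $|A+B|=|A|+|B|$, $A+B$ is aperiodic, and $B$ is non-extendible (because $(A,B)$ is non-extendible). Setting $H_a=\langle A\rangle$, Lemma \ref{kemp_firstlemma} then furnishes a quasi-periodic decomposition $B=B_1\cup B_0$ of $B$ with quasi-period $H_a$. Note $H_a$ is nontrivial, since $|A|\geq 3$ forces $A\neq\{0\}$.

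The key step is the following observation. Because $B$ is \emph{not} quasi-periodic, no quasi-periodic decomposition of $B$ with nontrivial quasi-period can have a nonempty periodic part. Hence the decomposition produced above must satisfy $B_1=\emptyset$, so $B=B_0$ is contained in a single $H_a$-coset. Since $0\in B$, this coset is $H_a$ itself, giving $B\subseteq\langle A\rangle$ and therefore $\langle B\rangle\leq \langle A\rangle$.

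Now I would repeat the same argument with the roles of $A$ and $B$ exchanged: we have $0\in B$, $|B|\geq 3$, $B+A=A+B$ is aperiodic, and $A$ is non-extendible, so Lemma \ref{kemp_firstlemma} yields a quasi-periodic decomposition $A=A_1\cup A_0$ with quasi-period $\langle B\rangle$, and the non-quasi-periodicity of $A$ forces $A_1=\emptyset$, hence $A\subseteq \langle B\rangle$ and $\langle A\rangle\leq\langle B\rangle$. Combining both containments gives $\langle A\rangle=\langle B\rangle$, as desired. There is no real obstacle here beyond correctly verifying the hypotheses of Lemma \ref{kemp_firstlemma} in each direction; the whole argument is essentially a symmetric double application of that lemma, with the non-quasi-periodicity hypothesis used to collapse each decomposition to its aperiodic part.
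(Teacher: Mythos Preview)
Your proof is correct and follows essentially the same approach as the paper: a symmetric double application of Lemma~\ref{kemp_firstlemma}, using the non-quasi-periodicity of each set to force it into a single coset of the subgroup generated by the other. The paper's version is simply terser, compressing your argument into two sentences.
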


\begin{proof}
Since $B$ is not quasi-periodic, it follows from Lemma
\ref{kemp_firstlemma} that $\langle B\rangle \leq \langle A\rangle$.
Likewise $\langle A\rangle \leq \langle B\rangle$, whence $\langle
A\rangle =\langle B\rangle$.
\end{proof}


The following lemma essentially shows that it is sufficient for $A$
to be a non-quasi-periodic, generating subset in order that this be
true (at least when $G$ is finite) of every
$C\in\{A,B,A+B,\overline{B},\overline{A},\overline{A+B}\}$. In the
proof, via various means, we will often reduce the case $A+B=C$ to a
case $A'+B'=C'$, where at least one of $A'$ $B'$ and $C'$ is a set
from $\pm\{A,B,A+B,\overline{B},\overline{A},\overline{A+B}\}$, and
then employ an induction hypothesis to $A'+B'=C'$, or the like.
However, since we will want to stay restricted to the class of
non-quasi-periodic, generating subsets, the following lemma, along
with Proposition \ref{AB_dual}, will allow us to transfer these
assumptions from $A+B=C$ to $A'+B'=C'$.

\begin{lem} \label{kemp_lem_gen2}Let $A$ and $B$ be finite, nonempty
subsets of an abelian group $G$ with $|A+B|=|A|+|B|$, $0\in A\cap
B$, $|A|,\,|B|,\,|\overline{A+B}|\geq 3$, $A+B$ aperiodic and
$(A,B)$ non-extendible. If $\langle A\rangle =G$ and $A$ is not
quasi-periodic, then $B$ is not quasi-periodic and $\langle
B\rangle=G$. Furthermore, if $G$ is finite, then neither $A+B$ nor
$\overline{A+B}$ is quasi-periodic, and $\langle
-\gamma+\overline{A+B}\rangle=G$, where $\gamma\in\overline{A+B}$.
\end{lem}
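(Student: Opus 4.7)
\noindent\emph{Proof plan.} The lemma asserts four things: (1) $B$ is not quasi-periodic, (2) $\langle B\rangle=G$, (3) $\overline{A+B}$ is not quasi-periodic and $\langle-\gamma+\overline{A+B}\rangle=G$, and (4) $A+B$ is not quasi-periodic. I would establish them in this order. The first two follow directly from Lemmas \ref{kemp_Lemma_qp} and \ref{kemp_firstlemma} applied with the roles of $A$ and $B$ exchanged. The third is obtained by dualizing via Proposition \ref{AB_dual} and re-applying the first two parts to the dual pair. The fourth is the main obstacle: in a finite group the complement of a quasi-periodic set need not itself be quasi-periodic, so a short case analysis will be needed.

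The key observation driving (1) and (2) is that, since $A$ is non-quasi-periodic, every quasi-periodic decomposition $A=A_1\cup A_0$ produced by an earlier lemma must have $A_1=\emptyset$; combined with $0\in A$ this forces $A$ to sit in a single coset of the quasi-period $H$, hence $A\subseteq H$, and then $\langle A\rangle=G$ forces $H=G$. For (1), assume $B$ is quasi-periodic with reduced decomposition $B=B_1\cup B_0$, $B_1$ maximally $H$-periodic. Lemma \ref{kemp_Lemma_qp} applied with the roles of $A,B$ swapped gives $A$ a quasi-periodic decomposition with quasi-period $H$, so $H=G$; then $B_1$ is a nonempty $G$-periodic subset of $G$, forcing $B_1=G$ and hence $B=G$. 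But then $|A+B|=|G|=|A|+|G|$ gives $|A|=0$, a contradiction. For (2), set $H_b=\langle B\rangle$; Lemma \ref{kemp_firstlemma} applied with $A,B$ swapped supplies a quasi-periodic decomposition of $A$ with quasi-period $H_b$, so $A\subseteq H_b$ by the same reasoning, and hence $\langle B\rangle=H_b\geq\langle A\rangle=G$.

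For (3), fix any $\gamma\in\overline{A+B}$ and consider the translated dual pair $X=-B$ and $Y=-\gamma+\overline{A+B}$. Both contain $0$, and by Proposition \ref{AB_dual} we have $X+Y=-\gamma+\overline{A}$ with $(X,Y)$ non-extendible. The cardinality hypotheses of the lemma transfer because $|X|,|Y|,|\overline{X+Y}|=|B|,|\overline{A+B}|,|A|\geq 3$; the sum $-\gamma+\overline{A}$ is aperiodic because $A$ itself is aperiodic (a non-quasi-periodic set is aperiodic, since a nontrivial period would yield the decomposition with $A_1=A$ nonempty); and $X=-B$ inherits $\langle X\rangle=G$ and non-quasi-periodicity from (1) and (2). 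Thus I would apply the already-proved parts (1) and (2) to $(X,Y)$ to conclude that $Y$ is non-quasi-periodic with $\langle Y\rangle=G$; translating back gives (3).

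The hardest part is (4). Assume for contradiction $A+B=C_1\cup C_0$ is a genuine quasi-periodic decomposition with $C_1$ maximally $H$-periodic and $C_0$ a nonempty subset of some coset $g+H$. Aperiodicity of $A+B$ forces $C_0\subsetneq g+H$, $C_1\cap(g+H)=\emptyset$, and $H\neq G$ (else $C_1=G$ would make $A+B=G$ be $G$-periodic). Set $D_1=G\setminus C_1\setminus(g+H)$, a union of $H$-cosets; then $\overline{A+B}=D_1\cup((g+H)\setminus C_0)$ where $(g+H)\setminus C_0$ is a nonempty subset of $g+H$. If $D_1\neq\emptyset$, this is a genuine quasi-periodic decomposition of $\overline{A+B}$ with nonempty periodic part, contradicting the first half of (3). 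If instead $D_1=\emptyset$, then $\overline{A+B}\subseteq g+H$, so $-\gamma+\overline{A+B}\subseteq H\subsetneq G$ and $\langle-\gamma+\overline{A+B}\rangle\leq H<G$, contradicting the second half of (3). In either case we obtain a contradiction, so $A+B$ is non-quasi-periodic, completing the proof.
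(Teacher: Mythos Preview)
Your proof is correct and follows essentially the same approach as the paper's. The only cosmetic differences are that in part (3) you dualize via $(-B,\overline{A+B})$ (using the just-established properties of $B$) while the paper uses $(-A,\overline{A+B})$ directly from the hypotheses, and in part (4) you spell out explicitly the dichotomy $D_1\neq\emptyset$ versus $D_1=\emptyset$ that the paper compresses into the single line ``since $\overline{A+B}$ is not quasi-periodic, $G=\langle-\gamma+\overline{A+B}\rangle\leq H_a$''.
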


\begin{proof}
If $B$ is quasi-periodic with quasi-period $H_a$, then since $A$ is
not quasi-periodic, it follows in view of Lemma \ref{kemp_Lemma_qp}
that $G=\langle A\rangle \leq H_a$, implying $H_a=G$.  Hence $B=G$,
contradicting that $A+B$ is aperiodic. Therefore we can assume $B$
is not quasi-periodic. Hence from Lemma \ref{kemp_generate_lem} it
follows that $G=\langle A\rangle=\langle B\rangle$.

Now assume $G$ is finite. Since $(A,B)$ is non-extendible, it
follows in view of Proposition \ref{AB_dual} that
$-A+\overline{A+B}=\overline{B}$, with $(-A,\overline{A+B})$
non-extendible. Since $A+B$ is aperiodic, it follows that $B$, and
thus $\overline{B}$, is aperiodic. Hence from the result of the
previous paragraph applied to $-A$ and $\overline{A+B}$, it follows
that $\overline{A+B}$ is not quasi-periodic and
$\langle-\gamma+\overline{A+B}\rangle=\langle A\rangle=G$, where
$\gamma\in\overline{A+B}$. If $A+B$ is quasi-periodic with
quasi-period $H_a$, then it follows, in view of $\overline{A+B}$ not
quasi-periodic, that $G=\langle-\gamma+\overline{A+B}\rangle\leq
H_a$. Hence $H_a=G$, contradicting that $A+B$ is aperiodic, and
completing the proof.
\end{proof}

Next, we prove the following simple proposition that will be needed
in several places.


\begin{pro}\label{matching_prop} Let $A$ and $B$ be finite subsets of an
abelian group, and let $C\subseteq A+B$. Suppose
$|A|,\,|B|,\,|C|\geq 2$. If $(a+B)\cap C$ and $(b+A)\cap C$ are both
nonempty for every $a\in A$ and $b\in B$, then either (a) there
exist distinct $a,\,a'\in A$, distinct $b,\,b'\in B$ and distinct
$c,\,c'\in C$ such that $a+b=c$ and $a'+b'=c'$, or else (b)
$|A|=|B|=|C|=2$, and there exists $a\in A$ and $b\in B$ such that
$a+B=b+A=C$.
\end{pro}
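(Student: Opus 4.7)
The plan is to model the hypothesis as a bipartite graph $\Gamma$ on $A\cup B$ (regarded as disjoint), with an edge from $a\in A$ to $b\in B$ precisely when $a+b\in C$, carrying the label $a+b$. The two nonemptiness conditions translate to the statement that every vertex of $\Gamma$ has degree at least $1$. The negation of (a) becomes the following structural principle, which drives the entire argument: \emph{any two edges $(a,b)$ and $(a',b')$ with $a\neq a'$ and $b\neq b'$ must carry the same label $a+b=a'+b'$}. I will show that this alternative forces (b).

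The first step is to handle the case in which every vertex of $\Gamma$ has degree exactly $1$. Then the edges form a perfect matching between $A$ and $B$, every pair of distinct edges automatically has disjoint endpoints, so all edges share a common label and $|C|=1$, contradicting $|C|\geq 2$. Thus some vertex, which by symmetry I take to be some $a_1\in A$, has degree $\geq 2$; pick distinct $b_1,b_2\in B$ with $(a_1,b_1),(a_1,b_2)$ edges, of distinct labels $c_1=a_1+b_1$ and $c_2=a_1+b_2$. The key local computation: for any edge $(a,b)$ with $a\neq a_1$, comparing with $(a_1,b_1)$ forces $b=b_1$ or $a+b=c_1$, and comparing with $(a_1,b_2)$ forces $b=b_2$ or $a+b=c_2$; since $c_1\neq c_2$, these combine to cut down the possibilities to only two allowed non-$a_1$ edges: $(a_1+(b_2-b_1),\,b_1)$ with label $c_2$, or $(a_1-(b_2-b_1),\,b_2)$ with label $c_1$.

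The remainder is to extract the cardinality constraints and verify (b). The degree condition on the $B$-side forces every $b\in B\setminus\{b_1,b_2\}$ to be a neighbor of $a_1$; rerunning the local argument with the pair $(a_1,b_1),(a_1,b_3)$ for such a $b_3$ would restrict non-$a_1$ edges to have $b$-coordinate in $\{b_1,b_2\}\cap\{b_1,b_3\}=\{b_1\}$, and matching the two allowed forms for such an edge then yields $b_2=b_3$, a contradiction. Hence $|B|=2$. Similarly, if $|A|=3$ both allowed non-$a_1$ edges are forced to be present, and these two edges directly exhibit (a) (distinct $A$-endpoints $a_1\pm(b_2-b_1)$, distinct $B$-endpoints $b_1,b_2$, and distinct labels $c_1,c_2$), so $|A|=2$; in turn $|C|=2$, since every label lies in $\{c_1,c_2\}$. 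Finally, with $A=\{a_1,a_2\}$ the edge from $a_2$ takes one of the two allowed forms, and a one-line check gives $a_1+B=b_*+A=C$ for the corresponding $b_*\in\{b_1,b_2\}$, which is (b).

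I expect the main obstacle to be the bookkeeping in the high-degree branch of Case 2: making sure that iterating the ``two allowed forms'' constraint against additional $a_1$-neighbors really rules out all seemingly possible higher-degree configurations, rather than generating unanticipated new ones. The crucial point is that every additional $a_1$-neighbor only shrinks the allowable intersection for non-$a_1$ edges, so iterating the constraint collapses the structure to the exceptional configuration of (b) with no loose ends.
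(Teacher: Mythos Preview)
Your proof is correct and follows essentially the same strategy as the paper's: both reduce not-(a) to the principle that two ``edges'' $(a,b)$ and $(a',b')$ with $a\neq a'$, $b\neq b'$ must have $a+b=a'+b'$, and then perform a short case analysis to force the $2\times 2\times 2$ configuration of (b). Your bipartite-graph packaging and the choice to pivot on a high-degree vertex (rather than on two elements of $C$, as the paper does) make the bookkeeping somewhat cleaner, but the underlying argument is the same.
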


\begin{proof} Since $C\subseteq A+B$, let $a_1+b_1=c_1$ with
$a_1\in A$, $b_1\in B$ and $c_1\in C$. Since $|C|\geq 2$, let
$c_2\in C$ be distinct from $c_1$. Since $C\subseteq A+B$, it
follows that $c_2=a+b$ for some $a\in A$ and $b\in B$. If $a\neq
a_1$ and $b\neq b_1$, then the proof is complete. Otherwise, we may
w.l.o.g. by symmetry assume $c_2=a_2+b_1$, with $a_2\neq a_1$ (since
$c_2\neq c_1$). Since $|B|\geq 2$, let $b_2\in B$ with $b_2\neq
b_1$. Since $(b_2+A)\cap C$ is nonempty, it follows that $a+b_2=c$
for some $a\in A$ and $c\in C$. If $c\notin \{c_1,c_2\}$, then (a)
is satisfied with $a+b_2=c$ and either $a_1+b_1=c_1$ (if $a\neq
a_1$) or $a_2+b_1=c_2$ (if $a\neq a_2)$. Thus we can w.l.o.g. by
symmetry assume $c=c_2$. If $a\notin \{a_1,a_2\}$, then (a) is
satisfied with $a+b_2=c_2$ and $a_1+b_1=c_1$. Thus, since
$a_2+b_1=c_2$ (implying $a_2+b_2\neq c_2$), it follows that we can
assume $a=a_1$. Hence $a_1+b_2=c_2$, $a_1+\{b_1,b_2\}=\{c_1,c_2\}$
and $b_1+\{a_1,a_2\}=\{c_1,c_2\}$.

Suppose there exists $a\in A\setminus\{a_1,a_2\}$. Hence, since
$(a+B)\cap C$ is nonempty, it follows that $a+b=c$ for some $b\in B$
and $c\in C$. If $c\notin\{c_1,c_2\}$, then (a) is satisfied with
$a+b=c$ and either $a_1+b_1=c_1$ (if $b\neq b_1$) or $a_1+b_2=c_2$
(if $b\neq b_2)$. Therefore we can assume $c\in \{c_1,c_2\}$. If
$b\notin \{b_1,b_2\}$, then (a) is satisfied with $a+b=c$ and either
$a_1+b_1=c_1$ (if $c\neq c_1$) or $a_1+b_2=c_2$ (if $c\neq c_2)$.
Therefore, since $a_1+b_1=c_1$ and $a_2+b_1=c_2$ (implying
$a+b_1\notin\{c_1,c_2\}$), and since $a_1+b_2=c_2$ (implying
$a+b_2\neq c_2$), it follows that $b=b_2$ and $c=c_1$. Thus (a) is
satisfied with $a_2+b_1=c_2$ and $a+b_2=c_1$. So we can assume
$|A|=2$. Likewise, the same argument applied to $B$ shows that
$|B|=2$.

Suppose $c\in C\setminus\{c_1,c_2\}$. Since $C\subseteq A+B$, it
follows that $a+b=c$ for some $a\in A$ and $b\in B$. Since
$|A|=|B|=2$, and since $a_1+b_1=c_1$, $a_1+b_2=c_2$, and
$a_2+b_1=c_2$, it follows that $c=a_2+b_2$, whence (a) is satisfied
with $a_2+b_2=c$ and $a_1+b_1=c_1$. So we can assume $|C|=2$. Hence
in view of the conclusion of the first paragraph, it follows that
$a_1+B=a_1+\{b_1,b_2\}=\{c_1,c_2\}=C$ and
$b_1+A=b_1+\{a_1,a_2\}=\{c_1,c_2\}=C$, whence (b) holds.
\end{proof}


In the proof, once we have established that Theorem
\ref{KST_Step_Beyond} holds for $A'+B'=C'$, we will want to transfer
the resulting structure back to $A+B=C$. Since one of $A'$, $B'$ and
$C'$ will be a set from
$\pm\{A,B,A+B,\overline{A},\overline{B},\overline{A+B}\}$, our
strategy will be to use this common linking set (along with
Proposition \ref{AB_dual}) as the means of transferring the
structural information. However, to accomplish this, we will need to
know that having the unpaired structural information for only the
set $A$, is enough to conclude that Theorem \ref{KST_Step_Beyond}
holds for the pair that includes $A$. The following two lemmas
accomplish this in the case when a non-quasi-periodic, generating
subset $A$ is close to being quasi-periodic. Note that we have begun
assuming $G$ finite in the hypotheses of the lemmas. This is done to
make use of the set $\overline{A+B}$ via Proposition \ref{AB_dual}.
However, since we will be able to later show that the case $G$
infinite follows from the case $G$ finite, this will not hinder the
proof.

\begin{lem}\label{kemp_lem_sec_transfer} Let $A$ and $B$ be nonempty
subsets of a finite abelian group $G$ with $|A+B|=|A|+|B|$, $0\in
A\cap B$, $|A|,\,|B|,\,\db(A+B,\mathcal{P})\geq 3$, $(A,B)$
non-extendible, $\langle A\rangle =G$ and $A$ not quasi-periodic. If
$A=A'\cup A_2\cup A_1$ with $A'$ a nonempty periodic subset with
maximal period $H_a$, $A_1\subseteq a_1+H_a$, and $A_2\subseteq
a_2+H_a$, for some $a_i\in G$, then $\db(C,\mathcal{QP})=1$ for all
$C\in \{A,B,A+B,\overline{A},\overline{B},\overline{A+B}\}$ and
(\ref{Kemp_extendible}) holds.
\end{lem}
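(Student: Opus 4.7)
The plan is to transfer the "almost quasi-periodic" structure of $A$ onto $B$ by applying KST in the quotient $G/H_a$, then to read off all the claimed distance bounds and~(\ref{Kemp_extendible}) from the resulting structure. First, by translation assume $0\in A'\cap B$; set $h=|H_a|$ and $\phi=\phi_a$. Since $A$ is not quasi-periodic but $A'$ is maximally $H_a$-periodic, both $A_1$ and $A_2$ must be nonempty proper subsets of their respective $H_a$-cosets (so $1\leq|A_i|\leq h-1$), the cosets $a_1+H_a\neq a_2+H_a$ are disjoint from $A'$, $|\phi(A)|=|\phi(A')|+2$, and $\langle\phi(A)\rangle=G/H_a$; since $\db(A+B,\mathcal{P})\geq 3$, $A+B$ is aperiodic. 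The crux is to establish that $(\phi(A),\phi(B))$ is critical in $G/H_a$: I would count $|A+B|$ coset-by-coset, noting that $A'+B$ is $H_a$-periodic contributing $h\cdot|\phi(A')+\phi(B)|$ while each coset of $\phi(A)+\phi(B)\setminus(\phi(A')+\phi(B))$ is covered only by partial sums $A_i+B_{b,H_a}$, each proper in the coset because $A_i$ is proper. Combining this accounting with Kneser's Theorem applied to $\phi(A)+\phi(B)$, the equation $|A+B|=|A|+|B|$ forces both $|\phi(A)+\phi(B)|=|\phi(A)|+|\phi(B)|-1$ and the aperiodicity of $\phi(A)+\phi(B)$, the latter because a nontrivial quotient period combined with the partial-coset gaps would yield $\db(A+B,\mathcal{P})\leq 2$. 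KST now applies to $(\phi(A),\phi(B))$, and lifting the resulting quasi-periodic decomposition gives $B=B'\sqcup B_1\sqcup B_2$ with $B'$ maximally $H_a$-periodic and $B_1,B_2$ proper partial $H_a$-cosets, both nonempty---otherwise Lemma~\ref{kemp_Lemma_qp} applied to $(B,A)$ would force $A$ quasi-periodic, a contradiction.

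Equality in the coset count further constrains each nontrivial partial-coset sum $A_i+B_j$ to be critical inside $H_a$, and a case analysis in $H_a$ using Kneser, non-extendibility, and $\db(A+B,\mathcal{P})\geq 3$ rules out every possibility except $\{|A_1|,|A_2|\}=\{|B_1|,|B_2|\}=\{1,h-1\}$ (collapsing to $\{1\}$ when $h=2$). This immediately yields $\db(A,\mathcal{QP})=\db(B,\mathcal{QP})=1$ by filling the size-$(h-1)$ partial coset with its one missing element, and $\db(\overline A,\mathcal{QP})=\db(\overline B,\mathcal{QP})=1$ by filling the size-$(h-1)$ missing block of the complement (which corresponds to the size-$1$ partial coset of $A$ or $B$). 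For $C=A+B,\overline{A+B}$, Proposition~\ref{AB_dual} provides the non-extendible dual pairs $(-A,\overline{A+B})$ and $(-B,\overline{A+B})$ summing to $\overline B,\overline A$ with the same zero excess, and Lemma~\ref{kemp_lem_gen2} ensures these pairs inherit the present lemma's hypotheses; applying the just-established conclusion to these dual pairs gives $\db(\overline{A+B},\mathcal{QP})=1$, hence also $\db(A+B,\mathcal{QP})=1$.

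For~(\ref{Kemp_extendible}), let $\alpha$ be the unique missing element of the size-$(h-1)$ partial coset in $A$, so that $A\cup\{\alpha\}$ is quasi-periodic with quasi-period $H_a$ and singleton aperiodic part (the remaining size-$1$ partial coset); define $\beta$ analogously for $B$. Then $(A\cup\{\alpha\},B\cup\{\beta\})$ is a candidate KST type-(I) pair whose quotient $(\phi(A\cup\{\alpha\}),\phi(B\cup\{\beta\}))=(\phi(A),\phi(B))$ already satisfies the modulo-$H_a$ KST hypothesis from the first paragraph, and the singleton aperiodic parts supply the unique expression element required by KST(i). Verifying the four KST conditions yields $|A\cup\{\alpha\}+B\cup\{\beta\}|=|A\cup\{\alpha\}|+|B\cup\{\beta\}|-1$, which is~(\ref{Kemp_extendible}). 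The main obstacle is the structural reduction in the first paragraph---forcing $\phi(A)+\phi(B)$ to be critical and aperiodic---and the case analysis in the second paragraph that pins the partial-coset sizes to $\{1,h-1\}$; both rely sensitively on $\db(A+B,\mathcal{P})\geq 3$, and the $h=2$ case requires separate verification.
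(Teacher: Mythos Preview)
Your plan has a structural gap at the ``apply KST and lift'' step. Even granting that $(\phi_a(A),\phi_a(B))$ is a critical pair with aperiodic sumset in $G/H_a$, the Kemperman Structure Theorem gives you quasi-periodic decompositions of $\phi_a(A)$ and $\phi_a(B)$ with respect to some quasi-period $K/H_a$ in $G/H_a$; it tells you nothing about how many partially filled $H_a$-cosets the set $B$ has. Your sentence ``lifting the resulting quasi-periodic decomposition gives $B=B'\sqcup B_1\sqcup B_2$'' conflates the $K/H_a$-structure coming from KST with the $H_a$-coset structure you actually need. Your appeal to Lemma~\ref{kemp_Lemma_qp} correctly gives $l\geq 2$ partial $H_a$-cosets in $B$, but you never argue $l\leq 2$, and KST in the quotient does not supply it.

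The paper does not go through KST in the quotient at all. Instead it sets up the partial $H_a$-cosets of $A$, $B$, and $A+B$ directly, establishes the counting inequality~(\ref{kemp_secondtrans1}) from $|\phi_a(A')+\phi_a(B)|\geq|\phi_a(A')|+|\phi_a(B)|-1$, and then invokes Proposition~\ref{matching_prop} on the bipartite incidence between the two partial cosets $\{A_1,A_2\}$ of $A$ and the $l$ partial cosets of $B$ mapping into the $r$ partial cosets of $A+B$. The matching proposition, together with Kneser applied inside each partial sum $A_i+B_{b_j}$, forces $l=r=2$ and pins down the configuration $|A_1|=|B_{b_1}|=1$, $|A_2|=|B_{b_2}|=|H_a|-1$, $\phi_a(a_2+b_1)=\phi_a(a_1+b_2)$, $\phi_a(a_2+b_2)\notin\{\phi_a(c_1),\phi_a(c_2)\}$. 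That last relation is what makes your singleton-coset sum a genuine unique expression element and hence makes~(\ref{Kemp_extendible}) go through; without it your final paragraph is unsupported, since you have not verified KST(i) for $(A\cup\{\alpha\},B\cup\{\beta\})$. The matching argument via Proposition~\ref{matching_prop} is the missing engine in your sketch.
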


\begin{proof}
Let $B'$ be the maximal subset of $B$ that is $H_a$-periodic, and
let $B\setminus B'=B_{b_1}\cup\ldots\cup B_{b_l}$ be an $H_a$-coset
decomposition of $B\setminus B'$. From the maximality of $B'$ it
follows that no $B_{b_i}$ is $H_a$-periodic. From Lemma
\ref{kemp_lem_gen2} it follows that $\langle B\rangle =G$ and that
$B$ is not quasi-periodic. Hence $l\geq 2$, as otherwise $H_a=G$
implying $A=G$, which contradicts $A+B$ aperiodic. Since $A$ is not
quasi-periodic, it follows that $A_1$ and $A_2$ are both partially
filled $H_a$-cosets that are nonempty and disjoint modulo $H_a$.

If $|\phi_a(A')+\phi_a(B)|<|\phi_a(A')|+|\phi_a(B)|-1$, then from
Kneser's Theorem it follows that $\phi_a(A')+\phi_a(B)$ is periodic,
contradicting either the maximality of $H_a$ for $A'$ or the fact
that $A$ is non-extendible. Therefore
\begin{equation}\label{transfer2_lemma_1}|\phi_a(A')+\phi_a(B)|\geq |\phi_a(A')|+
|\phi_a(B)|-1.\end{equation}

Let $C'$ be the maximal subset of $A+B$ that is $H_a$-periodic, and
let $(A+B)\setminus C'=C_{c_1}\cup C_{c_2}\cup \ldots \cup C_{c_r}$
be an $H_a$-coset decomposition. In view of Lemma
\ref{kemp_lem_gen2} it follows that $A+B$ is not quasi-periodic.
Thus $r\geq 2$ (as $A'\neq \emptyset$ implies $C'\neq \emptyset$).
Note that $A'+B\subseteq C'$. Hence from (\ref{transfer2_lemma_1})
it follows that
\begin{multline}\label{kemp_secondtrans1} |A+B|\geq
(|\phi_a(A')|+|\phi_a(B')|+l-1)|H_a|+\Sum{i=1}{r}|C_{c_i}|\geq
\\ |A|+|B|-(|A_1|+|A_2|+\Sum{i=1}{l}|B_{b_i}|)+(l-1)|H_a|+
\Sum{i=1}{r}|C_{c_i}|.
\end{multline}

Since the $C_{c_i}$ are partially filled $H_a$-cosets, it follows
that $\phi_a(C\setminus C')\subseteq \phi_a(A_1\cup
A_2)+\phi_a(B\setminus B')$. From the non-extendibility of $A$, and
since each $A_i$ is a partially filled $H_a$-coset, it follows that
each $a_i$ has at least one $b_{j}$ such that
$\phi_a(a_i)+\phi_a(b_{j})\in \phi_a(C\setminus C')$. In view of the
non-extendibility of $B$, and since each $B_{b_i}$ is a partially
filled $H_a$-coset, it follows that same holds true for each $b_i$.
Consequently, it follows in view of Proposition \ref{matching_prop}
that either there exist distinct $b_{i_1}$ and $b_{i_2}$, and
distinct $c_{i'_1}$ and $c_{i'_2}$, such that
$\phi_a(a_1+b_{i_1})=\phi_a(c_{i'_1})$ and
$\phi_a(a_2+b_{i_2})=\phi_a(c_{i'_2})$, or else $l=r=2$ and w.l.o.g.
$\phi_a(a_1+\{b_1,b_2\})=\{\phi_a(c_1),\phi_a(c_2)\}$ and
$\phi_a(b_1+\{a_1,a_2\})=\{\phi_a(c_1),\phi_a(c_2)\}$. We handle
these cases separately.

\emph{Case 1:} First assume there exist distinct $b_{i_1}$ and
$b_{i_2}$, and distinct $c_{i'_1}$ and $c_{i'_2}$, such that
$\phi_a(a_1+b_{i_1})=\phi_a(c_{i'_1})$ and
$\phi_a(a_2+b_{i_2})=\phi_a(c_{i'_2})$. Since the $C_{c_i}$ are
partially filled, it follows from Kneser's Theorem that
$|A_1+B_{b_{i_1}}|\geq |A_1|+|B_{b_{i_1}}|-|H_{d_1}|+\rho_1$, for
some proper subgroup $H_{d_1}<H_a$, where $\rho_1$ is the number of
$H_{d_1}$-holes in $A_1$ and $B_{b_{i_1}}$. Likewise
$|A_2+B_{b_{i_2}}|\geq |A_2|+|B_{b_{i_2}}|-|H_{d_2}|+\rho_2$. Hence,
since $|H_{d_i}|\leq \frac{1}{2}|H_a|$, and since $|H_a|>|B_{b_i}|$
for all $i$, it follows in view of (\ref{kemp_secondtrans1}) that
\begin{multline}\label{kemp_secondtrans2} |A+B|\geq
|A|+|B|-(|A_1|+|A_2|+\Sum{i=1}{l}|B_{b_i}|)+(l-1)|H_a|+
\Sum{i=1}{r}|C_{c_i}|\geq \\
|A|+|B|-(|A_1|+|A_2|+|B_{i_1}|+|B_{i_2}|)+|H_a|+|A_1+B_{b_{i_1}}|+|A_2+B_{b_{i_2}}|\geq
\\ |A|+|B|+|H_a|-|H_{d_1}|-|H_{d_2}|+\rho_1+\rho_2\geq |A|+|B|.
\end{multline}
Hence, since $|A+B|=|A|+|B|$, it follows that $r=l=2$,
$\rho_1=\rho_2=0$, and $|H_{d_1}|=|H_{d_2}|=\frac{1}{2}|H_a|$, as
otherwise the above estimate will be strict.

Since $r=2$, then $|H_a|=2$ would imply that
$\db(A+B,\mathcal{P})\leq 2$, a contradiction. Therefore
$|H_{d_1}|=|H_{d_2}|=\frac{1}{2}|H_a|>1$. If $H_{d_1}\cap H_{d_2}$
is nontrivial, then in view of $\rho_1=\rho_2=0$, it follows that
$A$, and thus $A+B$, is periodic, a contradiction. Thus it follows
that $|H_a|\geq |H_{d_1}||H_{d_2}|=\frac{1}{4}|H_a|^2$. Hence, since
$|H_{d_1}|=|H_{d_2}|=\frac{1}{2}|H_a|>1$, it follows that $|H_a|=4$.
Thus $\db(A+B,A+B+H_{d_1})\leq |H_a|-|H_{d_2}|=2$, contradicting
that $\db(A+B,\mathcal{P})\geq 3$, and completing the case.

\emph{Case 2:} Next assume that $l=r=2$, and that w.l.o.g.
$\phi_a(a_1+\{b_1,b_2\})=\phi_a(b_1+\{a_1,a_2\})=\{\phi_a(c_1),\phi_a(c_2)\}$.
Hence, since each $C_{c_i}$ is a partially filled $H_a$-coset, it
follows in view of Proposition \ref{mult_result} that
\be\label{wowowo}|A_1|+|B_{b_2}|\leq |H_a|\;\mbox{ and
}\;|B_{b_1}|+|A_2|\leq |H_a|.\ee Since
$\phi_a(b_1+\{a_1,a_2\})=\{\phi_a(c_1),\phi_a(c_2)\}$, it follows
that $|C_{c_1}|+|C_{c_2}|\geq |A_2|+|B_{b_1}|$, and from Kneser's
Theorem applied to $B_{b_1}+A_1$, it follows that equality is
possible only if $C_{c_1}\cup C_{c_2}=B_{b_1}+(A_1\cup A_2)$ is
$H_b$-periodic with $\phi_b(A_1)=1$ and $H_b\leq H_a$. Thus, since
$A+B$ is aperiodic, it follows that equality is possible only if
$|A_1|=1$. The same argument applied to
$\phi_a(a_1+\{b_1,b_2\})=\{\phi_a(c_1),\phi_a(c_2)\}$ also shows
that $|C_{c_1}|+|C_{c_2}|\geq |B_{b_2}|+|A_1|$, with equality
possible only if $|B_{b_1}|=1$. Hence, since $|A+B|\leq |A|+|B|$, it
follows in view of (\ref{wowowo}) and (\ref{kemp_secondtrans1}),
that we must in fact have equality in both the estimates
$|C_{c_1}|+|C_{c_2}|\geq |B_{b_2}|+|A_1|$ and
$|C_{c_1}|+|C_{c_2}|\geq |A_2|+|B_{b_1}|$, as well as both
inequalities in (\ref{wowowo}). Consequently, $|A_1|=|B_{b_1}|=1$,
$|A_2|=|B_{b_2}|=|H_a|-1$, and $C_{c_1}\cup C_{c_2}=B_{b_1}+(A_1\cup
A_2)$. Thus in view of Lemma \ref{kemp_lem_gen2}, it follows that
$\db(C,\mathcal{QP}_{H_a})=\db(C,\mathcal{QP})=1$ for all $C\in
\{A,B,A+B,\overline{A},\overline{B},\overline{A+B}\}$. Hence, since
$\db(A+B,\mathcal{P})\geq 3$, it follows that $|H_a|\geq 3$. Thus,
since the $C_{c_i}$ are partially filled, and since $C_{c_1}\cup
C_{c_2}=B_{b_1}+(A_1\cup A_2)$, it follows in view of Proposition
\ref{mult_result} that $\phi_a(a_2+b_2)\neq \phi_a(c_i)$ for all
$i$, and that $\phi_a(a_2+b_1)=\phi_a(a_1+b_2)$. Consequently,
letting $\alpha$ be the $H_a$-hole in $A_2$, and letting $\beta$ be
the $H_a$-hole in $B_{b_2}$, it follows that
$|A\cup\{\alpha\}+B\cup\{\beta\}|=|A+B|+1=|A\cup\{\alpha\}|+|B\cup\{\beta\}|-1$,
yielding (\ref{Kemp_extendible}), and completing the proof.
\end{proof}


\begin{lem}\label{kemp_lem_punc_QP} Let $A$ and $B$ be nonempty
subsets of a finite abelian group $G$ with $|A+B|=|A|+|B|$, $0\in
A\cap B$, $|A|,\,|B|,\,\db(A+B,\mathcal{P})\geq 3$, $(A,B)$
non-extendible, and $\langle A\rangle =G$. If
$\db(A,\mathcal{QP})=1$ and either $|A|\geq 4$ or $|B|\geq 4$, then
$\db(C,\mathcal{QP})=1$ for all $C\in
\{A,B,A+B,\overline{A},\overline{B},\overline{A+B}\}$ and
(\ref{Kemp_extendible}) holds.
\end{lem}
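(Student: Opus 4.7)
The plan is to produce a decomposition of $A$ of the form required by Lemma~\ref{kemp_lem_sec_transfer}, and then to apply that lemma; the residual degenerate configurations will need to be dispatched by direct inspection. Since $\db(A,\mathcal{QP})=1$, there exists $\alpha\notin A$ such that $A^{*}:=A\cup\{\alpha\}$ is quasi-periodic. Fix a quasi-periodic decomposition $A^{*}=A_{1}^{*}\cup A_{0}^{*}$ with quasi-period $H_{a}$, $A_{1}^{*}$ nonempty and $H_{a}$-periodic, and $A_{0}^{*}$ contained in a single $H_{a}$-coset; without loss of generality we take $H_a$ maximal, so that $\phi_a(A_1^{*})$ is a single point. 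I first verify that $\alpha\in A_{1}^{*}$. Otherwise $\alpha\in A_{0}^{*}$, and either $A_{0}^{*}=\{\alpha\}$, making $A=A_{1}^{*}$ periodic, or else $A=A_{1}^{*}\cup(A_{0}^{*}\setminus\{\alpha\})$ is itself a quasi-periodic decomposition of $A$; in both cases $A$ is quasi-periodic, contradicting $\db(A,\mathcal{QP})=1$. Thus setting $A'_{1}:=A_{1}^{*}\setminus(\alpha+H_{a})$ and $A_{2}:=(\alpha+H_{a})\setminus\{\alpha\}$ we obtain $A=A'_{1}\cup A_{2}\cup A_{0}^{*}$, with $A'_{1}$ an $H_{a}$-periodic subset, $A_{2}$ an $H_{a}$-coset with exactly one hole, and (when nonempty) $A_{0}^{*}$ contained in an $H_{a}$-coset distinct from $\alpha+H_{a}$.

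Next I rule out the degenerate subcases in which $A'_{1}$ or $A_{0}^{*}$ is empty. If $A_{0}^{*}=\emptyset$ and $A'_{1}\neq\emptyset$, then $A=A'_{1}\cup A_{2}$ is already a quasi-periodic decomposition of $A$, contradicting $\db(A,\mathcal{QP})=1$. If $A_{0}^{*}=\emptyset$ and $A'_{1}=\emptyset$, then $A=A_{2}\subseteq\alpha+H_{a}$, and $0\in A$ together with $\langle A\rangle=G$ forces $H_{a}=G$ and $A=G\setminus\{\alpha\}$; then $|A|+|B|=|A+B|\le|G|$ combined with $|A|=|G|-1$ yields $|B|=1$, contradicting $|B|\ge 3$. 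Hence $A_{0}^{*}\neq\emptyset$. Symmetrically, the subcase $A'_{1}=\emptyset$ with $A_{0}^{*}\neq\emptyset$ places $A$ inside exactly two $H_{a}$-cosets, with $|\phi_{a}(A)|=2$, the near-full coset $A_{2}$ of size $|H_{a}|-1$, and the partially filled coset $A_{0}^{*}$; here I would redo the Kneser-type estimates of Lemma~\ref{kemp_lem_sec_transfer} by hand in this two-coset configuration, using $\db(A+B,\mathcal{P})\ge 3$ to exclude small periods of $A+B$, the non-extendibility of $(A,B)$ to pin down the coset-level structure of $B$ via Proposition~\ref{AB_dual}, and the hypothesis ``$|A|\ge 4$ or $|B|\ge 4$'' to rule out the remaining borderline configurations; the output is the desired $\db(C,\mathcal{QP})=1$ for all six sets and (\ref{Kemp_extendible}).

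Finally, in the generic case in which both $A'_{1}$ and $A_{0}^{*}$ are nonempty, the decomposition $A=A'_{1}\cup A_{2}\cup A_{0}^{*}$ is precisely of the form required by Lemma~\ref{kemp_lem_sec_transfer} (replacing $H_{a}$ by the maximal period of $A'_{1}$ if necessary, which only merges cosets of $H_a$ and therefore preserves the hypothesis that $A_{2}$ and $A_{0}^{*}$ each lie in a single coset). Invoking Lemma~\ref{kemp_lem_sec_transfer} then yields the full conclusion. The main obstacle is the subcase $A'_{1}=\emptyset$, since Lemma~\ref{kemp_lem_sec_transfer} itself was proved under the assumption that a genuine periodic part is present; handling this residual two-coset case is where the auxiliary cardinality hypothesis ``$|A|\ge 4$ or $|B|\ge 4$'' (which excludes the small sporadic examples where $A$ could be an almost-full coset together with just a pair or triple of elements in another coset) does the real work.
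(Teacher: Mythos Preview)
Your reduction to Lemma~\ref{kemp_lem_sec_transfer} is exactly the paper's: one shows $\alpha$ lies in the periodic part $A_1^*$, decomposes $A$ as a (possibly empty) $H_a$-periodic piece together with a coset missing one element and the aperiodic piece $A_0^*$, and invokes Lemma~\ref{kemp_lem_sec_transfer} when the periodic piece is nonempty. (The remark that taking $H_a$ maximal makes $\phi_a(A_1^*)$ a single point is false---maximality of the period makes $\phi_a(A_1^*)$ aperiodic, not a singleton---but you never use it.)

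The real gap is the two-coset case $A_1'=\emptyset$, which you dismiss with ``I would redo the Kneser-type estimates of Lemma~\ref{kemp_lem_sec_transfer} by hand.'' This is where nearly all the content of the lemma lives, and the estimates of Lemma~\ref{kemp_lem_sec_transfer} do not transplant: that proof is anchored on a \emph{nonempty} periodic part $A'$, which supplies the Kneser bound $|\phi_a(A')+\phi_a(B)|\ge|\phi_a(A')|+|\phi_a(B)|-1$ and then uses Proposition~\ref{matching_prop} to pair the two partial cosets of $A$ with two distinct partial cosets of $B$. With no periodic anchor the argument is genuinely different. The paper splits on whether $|\phi_a(A)+\phi_a(B)|\ge|\phi_a(B)|+1$ or $=|\phi_a(B)|$ (recall $|\phi_a(A)|=2$). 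In the first case one introduces $c$ (the number of $b\in B$ alone in their $H_a$-coset) and $\rho'$ (the $H_a$-holes in the rest of $B$), and a counting argument using Proposition~\ref{mult_result} forces $c=\rho'=1$ and then $|\phi_a(B)|=2$; swapping the roles of $A$ and $B$ pins down $|A_0|=|B_0|=1$, after which (\ref{Kemp_extendible}) is verified by hand. In the second case $\phi_a(B)$ is periodic, $G/H_a$ is cyclic, and a separate counting argument (this is where the hypothesis ``$|A|\ge 4$ or $|B|\ge 4$'' actually enters) reduces to $l=0$, $|A_0|=|H_a|-1$, and ultimately forces $A+B$ periodic, a contradiction. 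None of this is indicated by your sketch, and Proposition~\ref{AB_dual} plays no role here.
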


\begin{proof}In view of Lemma \ref{kemp_lem_sec_transfer}, it
follows the proof is complete unless $A=A_1\cup A_0$ with each $A_i$
a subset of an $H_a$-coset and $|A_1|=|H_a|-1$, for some nontrivial
subgroup $H_a$. If $A_0$ is empty, then $\langle A\rangle =G$
implies that $H_a=G$, whence $|A|=|G|-1$, contradicting
$\db(A+B,\mathcal{P})\geq 3$. Therefore we can assume $A_0$ is
nonempty. Let $b_1,\ldots,b_c$ be those elements of $B$ that are the
unique element from their $H_a$-coset in $B$, and let
$B'=B_{b'_1}\cup\ldots \cup B_{b'_l}$ be an $H_a$-coset
decomposition of the remaining elements of $B$. Since $A$ is not
quasi-periodic it follows that $|A_0|<|H_a|$. Hence $|A|\geq 3$
implies $|H_a|\geq 3$. In view of Lemma \ref{kemp_lem_gen2}, it
follows that $B$ is not quasi-periodic and $\langle B\rangle=G$. We
divide the proof into two cases.

\emph{Case 1.} Suppose $|\phi_a(A)+\phi_a(B)|\geq
|\phi_a(A)|+|\phi_a(B)|-1=|\phi_a(B)|+1$. Hence in view of
Proposition \ref{mult_result} it follows that
\begin{multline*}|A+B|\geq
(l+c)|H_a|-c+|A_0|= \\
(|H_a|+|A_0|-1)+(l|H_a|+c)+c(|H_a|-2)-|H_a|+1\geq
|A|+|B|+\rho'+(c-1)(|H_a|-2)-1,\end{multline*} where $\rho'$ is the
number of $H_a$-holes in $B'$. Hence $|A+B|\leq |A|+|B|$ implies
\begin{equation}\label{kemp_2ndTrans-1}\rho'+(c-1)(|H_a|-2)-1\leq 0.\end{equation}
Note $c\geq 1$, since otherwise adding the $H_a$-hole contained in
$A_1$ will, in view of Proposition \ref{mult_result}, contradict
the non-extendibility of $A$.

Suppose $\rho'=0$. If $B'$ is empty, then $|B|\geq 3$ implies $c\geq
3$; otherwise, the cases $c\leq 2$ are covered by Lemma
\ref{kemp_lem_sec_transfer}. Thus in all cases we can assume $c\geq
3$, whence (\ref{kemp_2ndTrans-1}) implies $2|H_a|\leq 5$,
contradicting $|H_a|\geq 3$. So we can assume $\rho'>0$.

If $\rho'>1$, then (\ref{kemp_2ndTrans-1}) and $|H_a|\geq 3$ imply
$c\leq 0$, a contradiction. Therefore we can assume $\rho'=1$,
whence (\ref{kemp_2ndTrans-1}) and $|H_a|\geq 3$ imply $c=1$. Hence
if $|\phi_a(B)|>2$, then the proof is complete in view of $\rho'=1$
and Lemma \ref{kemp_lem_sec_transfer}. Otherwise $\rho'=c=1$ implies
that $B=B_1\cup B_0$ with both $B_i$ nonempty subsets of disjoint
$H_a$-cosets, $|B_1|=|H_a|-1$, and $|B_0|=1$. Thus the hypotheses of
the lemma and the case are satisfied by interchanging the roles of
$A$ and $B$, whence $|A_0|=1$ follows by the above argument as well.
Since $A$ is not quasi-periodic and since $\langle A\rangle =G$, it
follows that $\overline{A}$ is not quasi-periodic. Likewise for $B$.
Hence $\db(B,\mathcal{QP})=\db(\overline{B},\mathcal{QP})=
\db(A,\mathcal{QP})=\db(\overline{A},\mathcal{QP})=1$. In view of
Proposition \ref{mult_result} and $|H_a|\geq 3$, it follows that
$|A_1+B_1|=|H_a|$ and $A_0+B_1=A_1+B_0$, since otherwise $|A+B|\geq
2|H_a|+1>|A|+|B|$, a contradiction. Hence
$\db(A+B,\mathcal{QP}_{H_a})=\db(\overline{A+B},\mathcal{QP}_{H_a})=1$.
Thus, since neither $A+B$ nor $\overline{A+B}$ is quasi-periodic (in
view of Lemma \ref{kemp_lem_gen2}), it follows that
$\db(A+B,\mathcal{QP})=\db(\overline{A+B},\mathcal{QP})=1$. Finally,
by letting $\alpha$ be the $H_a$-hole in $A_1$, and by letting
$\beta$ be the $H_a$-hole in $B_1$, it follows in view of
$A_1+B_0=A_0+B_1$ and $|A_1+B_1|=|H_a|$ that
$|A\cup\{\alpha\}+B\cup\{\beta\}|=|A+B|+1=|A\cup\{\alpha\}|+|B\cup\{\beta\}|-1$,
yielding (\ref{Kemp_extendible}), and completing the case.

\emph{Case 2.} Suppose $|\phi_a(A)+\phi_a(B)|=|\phi_a(B)|$. Thus
from Kneser's Theorem it follows that $\phi_a(B)$ is periodic with
maximal subgroup $H_b/H_a$, and that $\phi_a(A)$ is contained in an
$H_b/H_a$-coset. Hence, since $\langle A\rangle =G$, it follows that
$H_b=G$ and that $G/H_a$ is cyclic generated by
$\phi_a(A_1)-\phi_a(A_0)$. Letting $\rho'$ be the number of
$H_a$-holes in $B'$, it follows in view of Proposition
\ref{mult_result} that
\begin{multline}\label{kemp_lem_trans8}|A+B|\geq (l+c)|H_a|-c=
(|H_a|-1+|A_0|)+(l|H_a|+c)+(c-1)(|H_a|-2)-|A_0|-1\geq \\
|A|+|B|-|A_0|+\rho'+(c-1)(|H_a|-2)-1,\end{multline} with equality
possible only if $A_1+b_i+H_a\nsubseteq A+B$ for $i=1,\ldots,c$.
Since $\db(A+B,\mathcal{P})\geq 3$, it follows, in view of
Proposition \ref{mult_result} and the assumption of the case, that
$c\geq 3$, with equality possible only if $A_1+b_i+H_a\nsubseteq
A+B$ for $i=1,\ldots,c$.

Suppose $l>0$. Hence, since $\phi_a(A_1)-\phi_a(A_0)$ generates
$G/H_a$, and since $c\geq 3>0$, it follows that there exists $b'_j$
such that $\phi_a(A_0+b'_j)=\phi_a(A_1+b_i)$ for some $b_i$. Thus it
follows in view of Proposition \ref{mult_result} that either
$\rho'\geq |A_0|$, or else $A_1+b_i+H_a= A_0+b'_j+H_a\subseteq A+B$.
In the former case, it follows in view of (\ref{kemp_lem_trans8}),
$|A+B|\leq |A|+|B|$ and $|H_a|\geq 3$, that $c\leq 2$, a
contradiction. In the latter case, it follows that the inequality
(\ref{kemp_lem_trans8}) is strict and that $c\geq 4$. Hence it
follows, in view of $|A+B|\leq |A|+|B|$ and $|A_0|\leq |H_a|-1$,
that $2|H_a|\leq 5$, contradicting $|H_a|\geq 3$. So we may assume
$l=0$, whence $c=|B|$.

Suppose $|A_0|\leq |H_a|-2$. In view of (\ref{kemp_lem_trans8}),
$|A+B|\leq |A|+|B|$, $|H_a|\geq 3$, and $c\geq 3$, it follows that
$|A_0|\geq 2|H_a|-5\geq |H_a|-2$. Thus $|A_0|\leq |H_a|-2$ implies
$|H_a|=3$, whence $|A_0|\leq |H_a|-2$ implies $|A|=3$. Thus by
hypothesis $c=|B|\geq 4$, whence (\ref{kemp_lem_trans8}) implies
$|A_0|\geq 3|H_a|-7\geq |H_a|-1$, a contradiction. So we can assume
$|A_0|=|H_a|-1$.

If $A_1+b_i+H_a\subseteq A+B$ for some $i$, then
(\ref{kemp_lem_trans8}) will be strict and $c\geq 4$, whence
$|A_0|\geq 3|H_a|-6\geq |H_a|$, a contradiction. Therefore we can
assume $A_1+b_i+H_a\nsubseteq A+B$ for all $i$. Thus, since
$\phi_a(B)$ is $G/H_a$-periodic with $G/H_a$ cyclic generated by
$\phi_a(A_1)-\phi_a(A_0)$, and since $|A_1|=|A_0|=|H_a|-1$, it
follows that we can permute the $b_i$ such that $A_1+b_i=A_0+b_{j}$
for $i\equiv j+1 \mod c$. Consequently,
\begin{multline*} A+B+\{b_1,b_{2}\}=\left(\bigcup_{i=1}^c
A_1+b_i\right)+\{b_1,b_{2}\}= \left(\bigcup_{i=1}^c
A_1+b_i+b_1\right)\cup \left(\bigcup_{i=1}^c A_1+b_{2}+b_i\right)=\\
\left(\bigcup_{i=1}^c A_1+b_i+b_1\right)\cup \left(\bigcup_{i=1}^c
A_0+b_{1}+b_i\right)=\left(\left(\bigcup_{i=1}^c A_1+b_i\right)\cup
\left(\bigcup_{i=1}^c
A_0+b_{i}\right)\right)+b_1=A+B+b_1,\end{multline*} implying from
Kneser's Theorem that $A+B$ is periodic, a contradiction.
\end{proof}


The following lemma will also be used to transfer unpaired
structural information from the single set $A$ to a pair containing
$A$, this time in the case when $A$ is an arithmetic progression.

\begin{lem}\label{kemp_lem_AP_transfer} Let $A$ and $B$ be finite, nonempty
subsets of an abelian group $G$ with $|A+B|=|A|+|B|$, $|A|\geq 3$,
and $A+B$ aperiodic.  If $A$ is an arithmetic progression with
difference $d$, then $h_d(B)=1$, $h_d(A+B)=h_d(\overline{A+B})=0$
and (\ref{Kemp_extendible}) holds.
\end{lem}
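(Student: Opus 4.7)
The plan is to analyze the structure of $B$ coset-by-coset modulo the subgroup $H=\langle d\rangle$. Since $A$ is a $d$-arithmetic progression, $A$ lies in a single $H$-coset and its image in $G/H$ is a singleton; hence the image of $A+B$ in $G/H$ has the same size as the image of $B$, forcing a decomposition $B=B_1\cup\ldots\cup B_k$ with each $B_i$ the nonempty intersection of $B$ with a distinct $H$-coset. Then $|A+B|=\sum_i|A+B_i|$ and each $A+B_i$ lies in a single $H$-coset.

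The key coset-by-coset estimate is $|A+B_i|\geq\min(|H|,|A|+|B_i|-1)$. When $A+B_i$ does not fill its $H$-coset, picking $z$ in the coset with $z\notin A+B_i$ shows that $B_i$ is disjoint from the length-$|A|$ progression $z-A$, so after translation $B_i$ fits into a linear window in which a direct interval-packing computation yields
$$|A+B_i|-|B_i|=(|A|-1)+\sum_{j}\min(g_j,|A|-1),$$
where the $g_j$ are the interior gaps of $B_i$ viewed as a subset of its coset. I call $B_i$ \emph{filling} if $A+B_i$ equals the full $H$-coset and \emph{non-filling} otherwise. Aperiodicity of $A+B$ forbids the case when every $B_i$ is filling (else $A+B$ would be a union of full $H$-cosets, hence $H$-periodic). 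Summing the estimate over the non-filling cosets gives $|A|=|A+B|-|B|\geq(\text{number of non-filling})(|A|-1)$, so $|A|\geq 3$ forces exactly one non-filling coset, which I index by $b$.

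Writing $t_b=|A+B_b|-|B_b|$ and $t_i=|H|-|B_i|$ for the filling cosets, the counting identity becomes $t_b+\sum_{i\neq b}t_i=|A|$ with $t_b\in\{|A|-1,|A|\}$ and thus $\sum_{i\neq b}t_i\in\{0,1\}$. Two subcases arise: if $t_b=|A|$, every filling $B_i$ is a full $H$-coset and the gap formula forces $B_b$ to be a $d$-AP missing exactly one interior element; if $t_b=|A|-1$, exactly one filling coset has $|B_i|=|H|-1$ (with the rest full $H$-cosets) and $B_b$ is itself a pure $d$-AP. In both subcases $B$ is the union of some full $H$-cosets together with a single partially-filled coset containing a $d$-AP, missing exactly one element, call it $\beta$; so $h_d(B)=1$, with the minimal quasi-progression having periodic part the union of the full $H$-cosets (treating the at-most-one partially-filled filling coset as if full) and aperiodic part the $d$-AP inside the non-filling shifted coset.

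Correspondingly, $A+B$ is the union of the full shifted $H$-cosets coming from the filling contributions together with a $d$-AP in the non-filling shifted coset, so $h_d(A+B)=0$; when $G$ is finite, $\overline{A+B}$ consists of the untouched $H$-cosets (which are $H$-periodic) together with the arc complementary to $A+B_b$ inside its coset, which is itself a $d$-AP (the complement of a $d$-AP in a finite cyclic $H$-coset is a $d$-AP), giving $h_d(\overline{A+B})=0$. Finally, adjoining $\beta$ to $B$ fills the single hole, so $A+(B\cup\{\beta\})=A+B$, and choosing any $\alpha\in A$ yields $|A\cup\{\alpha\}+B\cup\{\beta\}|=|A+B|=|A|+|B|=|A\cup\{\alpha\}|+|B\cup\{\beta\}|-1$, verifying (\ref{Kemp_extendible}). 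The main obstacle will be making the coset-by-coset estimate precise when $H$ is finite cyclic---ruling out via Kneser's Theorem the possibility of intermediate stabilizers of $A+B_i$ that would violate the bound---and extracting the sharp equality conditions in the gap formula that pinpoint the structure of $B_b$ and the at-most-one partially-filled filling coset.
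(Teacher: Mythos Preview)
Your proof is correct and follows essentially the same approach as the paper's: both arguments analyze $B$ along the $d$-direction and use the constraint $|A+B|-|B|=|A|$ together with $|A|\geq 3$ and aperiodicity to force exactly one ``large gap,'' yielding $h_d(B)=1$. The paper phrases this via a terse global count of $d$-components of $\overline{B}$ of length at least $|A|-1$, whereas you make the $\langle d\rangle$-coset decomposition and the interior-gap formula explicit; the content is the same.
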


\begin{proof} Note that $|A+B|\geq |B|+c(|A|-1)$, where $c$ is the
number of $d$-components of $\overline{B}$ with length at least
$|A|-1$. Since $A+B$ is aperiodic, it follows that there must be at
least one $d$-component of $\overline{B}$ with length at least
$|A|-1$. Hence either $|A+B|\geq |B|+2(|A|-1)$ or else $|A+B|=
|B|+|A|-1+h_d(B)$. Since $|A|\geq 3$, and since $|A+B|=|A|+|B|$, it
follows that former cannot hold, whence the latter implies
$h_d(B)=1$. Hence, since $|A|\geq 3$, it follows that $h_d(A+B)=0$,
whence $h_d(\overline{A+B})=0$ as well. Letting $\beta$ be the
single hole in $B$ and letting $\alpha\in A$, it follows that
(\ref{Kemp_extendible}) holds.
\end{proof}





The following lemma will be one of our main tools for reducing the
case $A+B=C$ to a case $A'+B'=C'$. Lemma
\ref{kemp_lem_2-comp-transfer} will allow us to conclude the sets
$A'=A+\{0,d\}$, $B'=B$ and $C'=C+\{0,d\}$ also satisfy
$|A'+B'|=|A'|+|B'|$ (whence induction will be employed), provided
$c_d(A)=2$ for some nonzero $d$. We note this was (more or less) the
main strategy used to prove the prime order case of Theorem
\ref{KST_Step_Beyond} in \cite{ham-rodseth}. Lemma
\ref{kemp_lem_2-comp-transfer} will also be needed for Lemma
\ref{kemp_lem_AAP_transfer}.

\begin{lem}\label{kemp_lem_2-comp-transfer} Let $A$ and $B$ be nonempty subsets of a finite abelian
group $G$ with $|A+B|=|A|+|B|$, $0\in A\cap B$, $|A|\geq 4$,
$|B|\geq 3$, $\db(A+B,\mathcal{P})\geq 3$, $(A,B)$ non-extendible,
$\langle A\rangle =G$ and $A$ not quasi-periodic. If $c_d(A)=2$ for
some nonzero $d$, then either $c_d(B),\,c_d(A+B)\leq 2$, or else
(\ref{Kemp_extendible}) holds and either $\db(C,\mathcal{QP})=1$ for
all $C\in \{A,B,A+B,\overline{A},\overline{B},\overline{A+B}\}$, or
$\db(B,\mathcal{AP})=0$.
\end{lem}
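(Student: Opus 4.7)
The strategy, as flagged in the paragraph preceding the lemma, is the Dyson-style ``$\{0,d\}$-extension'': set $A^*:=A+\{0,d\}$ and $B^*:=B+\{0,d\}$, so that $A^*+B = A+B^* = (A+B)+\{0,d\}$. Using the identity $c_d(X) = |X+\{0,d\}|-|X|$ and the hypothesis $c_d(A)=2$, we obtain $|A^*| = |A|+2$ and $|A^*+B| = |A|+|B|+c_d(A+B)$, yielding the two key relations
\[
|A^*+B|-|A^*|-|B|=c_d(A+B)-2, \qquad |A+B^*|-|A|-|B^*|=c_d(A+B)-c_d(B).
\]
These convert the hypothesis $|A+B|=|A|+|B|$ into critical or near-critical sumset data on $(A^*,B)$ and $(A,B^*)$, to be analyzed via KST or via Theorem \ref{KST_Step_Beyond} inductively.

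First I would dispose of the trivial cases: $c_d(A+B)=0$ forces $A+B$ to be $\langle d\rangle$-periodic, contradicting $\db(A+B,\mathcal{P})\geq 3$, while $c_d(B)=0$ forces $B$ (and hence $A+B$) to be periodic, again a contradiction; thus $c_d(A+B),c_d(B)\geq 1$. Suppose now that the first conclusion fails, so either $c_d(A+B)\geq 3$ or $c_d(B)\geq 3$.

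If $c_d(A+B)\leq 2$ and $c_d(B)\geq 3$, the second identity gives $|A+B^*|\leq|A|+|B^*|-1$, so $(A,B^*)$ is a critical pair and Kemperman's Structure Theorem yields an elementary pair of type (I)--(IV). Combined with $B=B^*\setminus E$ for a set $E$ of size $c_d(B)\geq 3$ consisting of end-terms of the aperiodic $d$-components of $B$, and with the rigid constraint $c_d(A)=2$, this KST-decomposition forces one of the two exceptional profiles: either $A$ is driven into punctured-quasi-periodic form, $\db(A,\mathcal{QP})=1$, or $B$ (being a dense subset of $B^*$) becomes an arithmetic progression. If instead $c_d(A+B)\geq 3$, one works with $(A^*,B)$, where $|A^*+B|\geq|A^*|+|B|+1$. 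Here the argument mirrors Case 1 of Lemma \ref{kemp_lem_punc_QP}: apply Kneser's theorem to $(A^*,B)$, observing that any nontrivial maximal period $H$ of $A^*+B$ must satisfy $d\notin H$ (else $A+B$ itself is $H$-periodic, contradicting $\db(A+B,\mathcal{P})\geq 3$), and then combine the non-extendibility of $(A,B)$---which forces each of the two elements of $A^*\setminus A$, one per aperiodic $d$-component of $A$, to contribute genuinely new sums---with Proposition \ref{mult_result} to again pin down one of the two exceptional profiles.

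In either subcase, transfer the extracted structure back to $(A,B)$: if $\db(A,\mathcal{QP})=1$, Lemma \ref{kemp_lem_punc_QP} delivers $\db(C,\mathcal{QP})=1$ for every $C\in\{A,B,A+B,\overline{A},\overline{B},\overline{A+B}\}$ along with (\ref{Kemp_extendible}); if $\db(B,\mathcal{AP})=0$, Lemma \ref{kemp_lem_AP_transfer} (applied with the roles of $A$ and $B$ interchanged) supplies (\ref{Kemp_extendible}). The main obstacle I expect is the detailed case analysis in the $c_d(A+B)\geq 3$ branch: carefully combining Kneser's theorem on $(A^*,B)$ with the non-extendibility of $(A,B)$ and the rigid $c_d(A)=2$ constraint, in the style of Cases 1 and 2 of Lemma \ref{kemp_lem_punc_QP}, to funnel the pair cleanly into one of the two exceptional profiles rather than leaving a residual third case.
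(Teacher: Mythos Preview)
Your approach has a genuine gap in the $c_d(A+B)\geq 3$ branch. There the pair $(A^*,B)$ satisfies
\[
|A^*+B|=|A^*|+|B|+(c_d(A+B)-2)\geq |A^*|+|B|+1,
\]
which is a \emph{large} sumset, not a critical one. Neither Kneser's Theorem nor KST yields any structural information from a lower bound of this form, so the appeal to ``mirror Case~1 of Lemma~\ref{kemp_lem_punc_QP}'' does not go through---that case exploited a specific punctured quasi-periodic hypothesis on $A$ that you do not have here. Your parenthetical claim that $d\in H$ for the maximal period $H$ of $A^*+B$ would force $A+B$ to be $H$-periodic is also false (take $A+B=\{0\}$, $d=1$ in $\Z/2\Z$).

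The idea you are missing is the passage to the \emph{dual pair} via Proposition~\ref{AB_dual}. Since $(A,B)$ is non-extendible, $-B+\overline{A+B}=\overline{A}$, and this dual pair also has $|X+Y|=|X|+|Y|$. The hypothesis $c_d(A)=2$ now reads $c_d(\overline{A})=2$, a constraint on the \emph{sumset} rather than on a summand. Adding $\{0,d\}$ to whichever summand has $c_d\geq 3$---to $-B$ when $c_d(B)\geq 3$, or to $\overline{A+B}$ when $c_d(A+B)=c_d(\overline{A+B})\geq 3$---then yields a genuinely critical pair, since the sumset $\overline{A}+\{0,d\}$ grows by only $2$ while the summand grows by at least $3$. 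One then applies KST; Lemma~\ref{kemp_lem_gen2} forces the quasi-period to be $G$ (because $\overline{A+B}$, respectively $B$, is a non-quasi-periodic generating set), and each KST type is eliminated or leads to one of the two exceptional profiles. The two branches $c_d(B)\geq 3$ and $c_d(A+B)\geq 3$ are thus handled symmetrically by the same dual argument, which is precisely what your asymmetric primal approach cannot do.
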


\begin{proof} Since $A$ is non-extendible, it follows in view of Proposition \ref{AB_dual}
that $-B+\overline{A+B}=\overline{A}$. Suppose $c_d(B)\geq 3$. Hence
since $c_d(A)=c_d(\overline{A})=2$, and since $|A+B|=|A|+|B|$, it
follows that
\be\label{2comp-trans-1}|(-B+\{0,d\})+\overline{A+B}|\leq
|-B+\{0,d\}|+|\overline{A+B}|-1.\ee If $\overline{A}+\{0,d\}$ is
periodic, then $|A|\geq 3$ and $c_d(\overline{A})=2$ imply that $A$
is a union of a nonempty periodic set and at most two elements,
whence Lemma \ref{kemp_lem_sec_transfer} implies
$\db(A,\mathcal{QP})=1$. Thus Lemma \ref{kemp_lem_punc_QP} completes
the proof. Therefore we can assume that $\overline{A}+\{0,d\}$ is
aperiodic. Hence Kneser's Theorem and (\ref{2comp-trans-1}) imply
$c_d(B)=3$ and $|(-B+\{0,d\})+\overline{A+B}|=
|-B+\{0,d\})|+|\overline{A+B}|-1$, whence we can apply KST to the
pair $(-B+\{0,d\},\overline{A+B})$. Let $-B+\{0,d\}=B_1\cup B_0$ and
$\overline{A+B}=C_1\cup C_0$ be the Kemperman decompositions with
common quasi-period $H_a$.

In view of Lemma \ref{kemp_lem_gen2}, it follows that
$\overline{A+B}$ is not quasi-periodic and that $\langle
\gamma-\overline{A+B}\rangle=G$ for $\gamma\in A+B$. Hence $H_a=G$,
whence we cannot have type (I), nor as the sumset is aperiodic can
we have type (III).

Suppose we have type (II). Hence $\overline{A+B}$ is an arithmetic
progression with difference $d'$, whence Lemma
\ref{kemp_lem_AP_transfer} applied to $(-B,\overline{A+B})$ implies
$h_{d'}(B)=1$ and $h_{d'}(\overline{A})=h_{d'}(A)=0$. However, in
view of Proposition \ref{AB_dual}, and Lemma
\ref{kemp_lem_AP_transfer} applied to $(-A,\overline{A+B})$, it
follows that $h_{d'}(A)=1$, contradicting $h_{d'}(A)=0$. So we
cannot have type (II), and thus must have type (IV). Hence, since
$\langle -\gamma+\overline{A+B}\rangle =G$, for $\gamma\in
\overline{A+B}$, it follows that $|\overline{A}+\{0,d\}|=|G|-1$,
implying $|A|\leq 3$, a contradiction. So we can assume $c_d(B)\leq
2$.

Applying the above argument with the roles of $B$ and
$\overline{A+B}$ interchanged, it follows that either
$c_d(\overline{A+B})=c_d(A+B)\leq 2$, or else $B$ is an arithmetic
progression. Hence, since $c_d(B)\leq 2$, it follows that we can
assume the later case holds, else the proof is complete. Thus in
view of Lemma \ref{kemp_lem_AP_transfer} it follows that
(\ref{Kemp_extendible}) holds, completing the proof.
\end{proof}


The next lemma stretches Lemma \ref{kemp_lem_AP_transfer} one step
further, to handle the case $\db(A,\mathcal{AP})=1$.

\begin{lem}\label{kemp_lem_AAP_transfer} Let $A$ and $B$ be
nonempty subsets of a finite abelian group $G$ with $|A+B|=|A|+|B|$,
$0\in A\cap B$, $|A|,\,|B|,\,\db(A+B,\mathcal{P})\geq 3$, and
$(A,B)$ non-extendible, $\langle A\rangle =G$ and $A$ not
quasi-periodic. If $\db(A,\mathcal{AP}_d)=1$ for some nonzero $d\in
G$, and if at most one of $|A|$, $|B|$ and $|\overline{A+B}|$ is
equal to $3$, then (\ref{Kemp_extendible}) holds, and one of
$h_d(B)\leq 1$, or $h_{d'}(B)=0$ for some non-zero $d'$, or
$\db(C,\mathcal{QP})=1$ for all $C\in
\{A,B,A+B,\overline{A},\overline{B},\overline{A+B}\}$, also holds
\end{lem}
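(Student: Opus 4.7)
Since $\db(A,\mathcal{AP}_d)=1$ and $A$ is not quasi-periodic (in particular, not an AP), we may write $A=A'\setminus\{\alpha\}$ where $A'$ is an AP with difference $d$ and $\alpha$ is interior to $A'$; this forces $c_d(A)=2$. The first step is to apply Lemma \ref{kemp_lem_2-comp-transfer} to $(A,B)$. Its trichotomy disposes of two branches immediately: if $\db(C,\mathcal{QP})=1$ for every $C\in\{A,B,A+B,\overline{A},\overline{B},\overline{A+B}\}$ together with (\ref{Kemp_extendible}), we are in option three of the present lemma; if (\ref{Kemp_extendible}) holds with $\db(B,\mathcal{AP})=0$, then $B$ is an AP with some difference $d'$, so $h_{d'}(B)=0$ gives option two. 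We may therefore assume $c_d(B),\,c_d(A+B)\le 2$.

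I then split on $c_d(B)$. If $c_d(B)=1$, then $B$ is an AP with difference $d$; Lemma \ref{kemp_lem_AP_transfer} applied to $(B,A)$ yields (\ref{Kemp_extendible}), and $h_d(B)=0\le 1$ satisfies option one. Otherwise $c_d(B)=2$. Set $A'':=A+\{0,d\}$, an AP with difference $d$ of length $|A|+2$. Since
\begin{equation*}
|A''+B|=|A+B|+c_d(A+B)=|A''|+|B|+c_d(A+B)-2,
\end{equation*}
and since the containment $A+B\subseteq A''+B$ with $|(A''+B)\setminus(A+B)|=c_d(A+B)\le 2$ would force $\db(A+B,\mathcal{P})\le 2$ if $A''+B$ were periodic, the sumset $A''+B$ must be aperiodic. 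When $c_d(A+B)=2$ we have $|A''+B|=|A''|+|B|$, so Lemma \ref{kemp_lem_AP_transfer} applied to $(A'',B)$ gives $h_d(B)=1$ and a hole $\beta$ with $B\cup\{\beta\}$ contained in a quasi-progression of difference $d$ of size $|B|+1$. Taking this $\beta$ together with $\alpha$ equal to the hole of $A$---so that $A\cup\{\alpha\}=A'$ is an AP---a direct computation (splitting on whether $B\cup\{\beta\}$ is itself an AP, in which case the sumset is an AP, or a genuine quasi-progression with nonempty $\langle d\rangle$-periodic part, in which case the periodic part absorbs the overlap) yields $|A'+(B\cup\{\beta\})|=|A'|+|B\cup\{\beta\}|-1$, which is (\ref{Kemp_extendible}) for $(A,B)$.

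The remaining sub-case is $c_d(B)=2$ with $c_d(A+B)=1$: then $A+B$ is an AP with difference $d$ and $(A'',B)$ is critical with $|A''+B|=|A''|+|B|-1$. I would apply KST to $(A'',B)$ and show that this sub-case cannot arise. Type (II) would force $B$ to be an AP with difference $d$, contradicting $c_d(B)=2$; type (I) with the aperiodic part of $B$ of cardinality at least two forces $B$ quasi-periodic, contradicting the non-quasi-periodicity of $B$ guaranteed by Lemma \ref{kemp_lem_gen2}; type (III) forces $|A''_0|+|B_0|=|H_a|+1$ with structural constraints incompatible with the AP sumset $A''+B$; and type (IV) produces a punctured-periodic sumset, incompatible with the aperiodic AP $A''+B$ (whose aperiodicity was already established). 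The main obstacle is precisely this full KST case analysis---especially handling types (II) and (III) with a nontrivial quasi-period $H_a\subseteq\langle d\rangle$, under which $A''$ may be quasi-periodic even though $A$ is not---requiring careful use of the constraints $c_d(B)=2$, $\langle A\rangle=G$, and the hypothesis that at most one of $|A|$, $|B|$, $|\overline{A+B}|$ equals $3$.
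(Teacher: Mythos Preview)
Your approach has two gaps, one of which is structural.

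\textbf{The case $|A|=3$.} Your very first step invokes Lemma~\ref{kemp_lem_2-comp-transfer}, whose hypotheses include $|A|\geq 4$. The present lemma only assumes $|A|\geq 3$, and the clause ``at most one of $|A|$, $|B|$, $|\overline{A+B}|$ equals $3$'' certainly permits $|A|=3$ (with $|B|,|\overline{A+B}|\geq 4$). You offer no argument for this case. The paper handles $|A|=3$ separately: after normalizing so that $A=\{0,d,3d\}$, it bounds the number $c$ of $d$-components of $B$ by $3$ via $|A+B|=|B|+3$, and then runs a direct counting argument on how many holes of each gap $B'_i$ are covered by $A+B$. This is precisely where the hypothesis on $|\overline{A+B}|$ is used (to rule out $|A|=|\overline{A+B}|=3$). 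Without some substitute for Lemma~\ref{kemp_lem_2-comp-transfer} when $|A|=3$, your argument does not start.

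\textbf{The sub-case $c_d(A+B)=1$.} You identify this as the main obstacle and leave the KST analysis incomplete, worrying in particular about a nontrivial quasi-period $H_a$. That worry is misplaced: by Lemma~\ref{kemp_lem_gen2}, $B$ is not quasi-periodic and $\langle B\rangle=G$, so in any Kemperman decomposition of $(A'',B)$ the aperiodic part of $B$ is all of $B$, forcing $B$ into a single $H_a$-coset and hence $H_a=G$. With $H_a=G$, types (I), (III), (IV) are excluded by $|A''|,|B|\geq 3$ and $|\overline{A+B}|\geq 3$, while type (II) makes $B$ an arithmetic progression with some difference $d'$, giving $h_{d'}(B)=0$ and (via Lemma~\ref{kemp_lem_AP_transfer} applied to $(B,A)$) the required \eqref{Kemp_extendible}. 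So this sub-case is in fact routine once you observe $H_a=G$; alternatively, $c_d(A+B)=1$ means $\overline{A+B}$ is itself an arithmetic progression with difference $d$, and Lemma~\ref{kemp_lem_AP_transfer} applied to $(\overline{A+B},-B)$ via Proposition~\ref{AB_dual} gives $h_d(B)=1$ directly.

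By contrast, the paper avoids the split on $c_d(A+B)$ entirely. After using Lemma~\ref{kemp_lem_2-comp-transfer} (only when $|A|\geq 4$) to get $c_d(B)\leq 2$, it performs a uniform counting argument on the $d$-components $B_1,\dots,B_c$ of $B$ and the gaps $B'_i$ between them, obtaining the inequality $|A+B|\geq |A|+|B|+h_d(B)-x+y$ (with $x,y$ counting singleton components and singleton gaps), from which $h_d(B)=1$ drops out immediately. This direct route treats both values of $c_d(A+B)$ at once and also supplies the template for the $|A|=3$ case.
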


\begin{proof}
Since $\db(A,\mathcal{AP}_d)=1$, it follows that $A$ is a subset of
an arithmetic progression with difference $d$ and one hole. Hence,
$c_d(A)= 2$. Furthermore, since $\langle A\rangle =G$, it follows
that $\langle d\rangle=G$. Since $B$ is not quasi-periodic (in view
of Lemma \ref{kemp_lem_gen2}), it follows that $h_d(B)=0$ implies
$B$ is an arithmetic progression, whence Lemma
\ref{kemp_lem_AP_transfer} completes the proof. So we can assume
$h_d(B)>0$.

By translating and considering $-A$ and $-B$ if necessary, we may
w.l.o.g. assume $0,\,d\in A$, and that $0$ is the first term of the
minimal arithmetic progression with difference $d$ containing $A$.
Since $\langle d\rangle =G$, let $B_1,\ldots,B_c$ be the
$d$-components of $B$ cyclicly ordered according to the direction
given by $d$. Let $B'_i$ be the $d$-component of $\overline{B}$
located between $B_i$ and $B_{i+1}$, with indices taken modulo $c$.
Hence $G$ is the disjoint union of the $B_i$ and $B'_i$. Observe,
since $0$ and $d$ are the first two terms in $A$, that for each $i$
either at least $\min\{|A|,|B'_i|\}$ of the holes contained in
$B'_i$ are elements of $A+B$, or else $|B_i|=1$ and at least
$\max\{1,\,\min\{|A|-1,|B'_i|-1\}\}$ of the holes contained in
$B'_i$ are elements of $A+B$. Hence, since $|B|+2(|A|-1)>|A|+|B|$,
it follows that $|B'_i|<|A|$ for all but at most one $i$ (say $c$).

If $|A|=3$, then $|A+B|=|B|+3$ implies $c\leq 3$. Hence if
$|B'_i|<|A|$ for all $i$, then $|A+B|\geq |\langle A\rangle|-c\geq
|G|-3$. Thus $|A|=|\overline{A+B}|=3$, a contradiction. If $|A|\geq
4$, then in view of Lemma \ref{kemp_lem_2-comp-transfer} it follows
that $c\leq 2$, else the proof is complete. Hence if $|B'_i|<|A|$
for all $i$, then $|A+B|\geq |\langle A\rangle|-c\geq |G|-2$,
contradicting $\db(A+B,\mathcal{P})\geq 3$. Thus regardless we can
assume $|B'_c|\geq |A|$. Consequently, the discussion of the
previous paragraph implies \be\label{hohomerryxmas}|A+B|\geq
|A|+|B|+h_d(B)-x+y,\ee where $x$ is the number of $B_i$ with
$|B_i|=1$, and $y$ is the number of $B'_i$ with $|B_i|=|B'_i|=1$.

Suppose $|A|=3$. Hence $A=\{0,d,3d\}$, and as noted in the previous
paragraph, $c\leq 3$. Since $|B|\geq 4$, it follows that $x\leq
c-1\leq 2$. On the other hand, $h_d(B)\geq c-1$, with equality
possible only if $|B'_i|=1$ for $i\leq c-1$. Thus from
(\ref{hohomerryxmas}) it follows that  $|A|+|B|=|A+B|\geq
|A|+|B|+h_d(B)-x+y\geq |A|+|B|$, whence indeed $|B'_i|=1$ for $i\leq
c-1$ and $y=0$. Hence if $c=2$, then $h_d(B)=1$, whence letting
$\alpha$ be the hole in $A$ and letting $\beta$ be the hole in $B$
yields (\ref{Kemp_extendible}) (recall that $\langle d\rangle =G$).
If $c=1$, then $h_d(B)=0$, a contradiction to the conclusion of the
first paragraph. Therefore assume $c=3$. Hence, since each $B'_i$
contributes at least one to the sumset, and since $B'_3$ contributes
at least $|A|-1$ (in view of the discussion in the second paragraph
of the proof), it follows that $|A+B|\geq |B|+|A|-1+(c-1)>|A|+|B|$,
again a contradiction. So we may assume $|A|\geq 4$, and (as noted
before) that $c\leq 2$.

Since $|B|\geq 3$ and since $c\leq 2$, it follows that at most one
$B_i$ can have cardinality one. Hence from (\ref{hohomerryxmas}) it
follows that $|A+B|\geq |A|+|B|+h_d(B)-1$, whence $h_d(B)=1$.
Letting $\alpha$ be the hole in $A$, and letting $\beta$ be the hole
in $B$ yields (\ref{Kemp_extendible}), and completes the proof.
\end{proof}


We conclude the list of lemmas with a short proof of a special case
of the Fainting Lemma from \cite{ham-ser-chowla}. We remark that the
idea for the proof of Lemma \ref{kemp_secondlemma} could be used to
prove a weaker form of the Fainting Lemma that does not require the
assumption about the first isoperimetric number.

\begin{lem}\label{kemp_secondlemma}
Let $A$ and $B$ be finite, nonempty subsets of an abelian group $G$
with $0\in A\cap B$, $|A|=3$, $|A+B|=|A|+|B|+m$, and $\langle
A\rangle=G$. If $|\nu_c(A,B)|\geq 2$ for all $c\in A+B$, then $G$ is
finite and $|B|\geq |G|-\binom{m+4}{2}$.\end{lem}

\begin{proof} In view of Proposition \ref{surjectivity_transfer},
it follows that $|\nu_c(B+(i-1)A,A)|\geq 2$ for all $c\in A+iB$ and
$i\geq 1$. Hence, since $|A|=3$, it follows that
$N_i^{A^*}=N_i^{\leq A^*}=N_i$ for all $i\geq 1$, where
$A^*=A\setminus 0$. Thus in view of Proposition
\ref{magic_N_i_lemma} it follows that
\be\label{isolem1}N_i-A^*\subseteq N_{i-1},\ee for all $i\geq 2$.
Note since $\langle A\rangle =G$, that either $B+lA=G$ for
sufficiently large $l$, if $G$ is finite, or else
$|B+lA|>|B+(l-1)A|$ for all $l$ (in view of Kneser's Theorem), if
$G$ is infinite. Thus if we can show that $|N_i|<|N_{i-1}|$ for
nonempty $N_{i-1}$ with $i\geq 2$, it will follow that
$N_i=\emptyset$ for sufficiently large $i$, whence $G$ is finite,
and that $|B|=|G|-\Summ{i\geq 1} |N_i|\geq
|G|-\Sum{i=0}{m+2}(m+3-i)=|G|-\binom{m+4}{2}$, completing the proof.
However, if $|N_i|\geq |N_{i-1}|>0$, then in view of (\ref{isolem1})
and Kneser's Theorem, it follows that $|N_i|=|N_{i-1}|$, that $N_i$
is periodic with maximal period $H_a$, and that $A^*$ is a subset of
an $H_a$-coset. Since $|\nu_c(B+(i-1)A,A)|\geq 2$ for all $c\in
B+iA$, it follows that $B+iA=B+(i-1)A+A^*$. Thus, since
$|\phi_a(A^*)|=1$, it follows that $\phi_a(B+(i-1)A)=\phi_a(B+iA)$,
whence $N_i=(B+iA)\setminus (B+(i-1)A)$ cannot be $H_a$-periodic, a
contradiction. Therefore $|N_i|<|N_{i-1}|$, completing the proof.
\end{proof}


We are now ready to proceed with the proof of Theorem
\ref{KST_Step_Beyond}. However, before beginning, we sketch the main
points to outline the strategy. We begin by handling the case $A+B$
periodic, and then show that we can restrict our attention to the
case when neither $A$ nor $B$ is quasi-periodic. We then handle the
cases when $\db(A+B,\mathcal{P})$ is small. The assumption that
$\db(A+B,\mathcal{P})$ is small will allow us (in most instances) to
show (\ref{Kemp_extendible}) fairly easily, since adding $H$-holes
to $A$ or $B$ can only increase $A+B$ by at most
$\db(A+B,\mathcal{P}_H)$ elements. However, there will be one
difficult instance that will instead lead to the type (VIII) pair.
Once we have established that $\db(A+B,\mathcal{P})\geq 3$, we can
restrict our attention to generating subsets and begin to gain
access to the lemmas we have just proved. To gain full access, we
must handle the case when $|A|\leq 3$. The case $|A|=|B|=3$ is
handled by brute force. We then restrict our attention to the case
$G$ finite. For the case $|A|=3$ with $|B|\geq 4$, we use Lemma
\ref{kemp_secondlemma} to show the existence of a unique expression
element $a+b$, and proceed by inductive arguments used on the pair
$(A,B\setminus b)$. These will fail if $A\subseteq B$ and $|B|=4$,
in which case an additional argument is used. With the cases
$\min\{|A|,|B|\}\leq 3$ complete, the proof then continues, for $G$
finite, by induction (assuming the theorem true for $A'$ and $B'$
with $\min\{|A'|,|B'|\}<\min\{|A|,|B|\}$ or
$\min\{|A'|,|B'|\}=\min\{|A|,|B|\}$ and $|A'|+|B'|>|A|+|B|$). We
employ the previously mentioned Dyson $e$-transform as the method to
obtain the pairs $A'+B'=C'$. The arguments from the proof of
Kneser's Theorem in \cite{PhD-Dissertation} will be extended to
handle the case when $A(e)+B(e)$ is periodic. The cases $|B(e)|\geq
3$ are handled by applying the induction hypothesis to the pair
$(A(e),B(e))$. This method fails when $|B(e)|\leq 2$, since in these
cases the unpaired structural information gained for a single set is
insufficient to directly transfer back to the original pair. In the
case $|B(e)|=1$, we instead use the method developed in Section 3.
In the case $|B(e)|=2$, then via Proposition \ref{AB_dual} we will
obtain $c_d(A)\leq 2$ for some nonzero $d$, whence we instead
consider $A+\{0,d\}+B$, as discussed before Lemma
\ref{kemp_lem_2-comp-transfer}. We will encounter problems if
$|\overline{A+B}|$ is small. The remaining cases will then be shown
to follow from the case $|A|=|B|=4$ with $c_d(A)=c_d(B)=c_d(A+B)=2$.
The proof with $G$ finite concludes by completing this last
remaining case directly. The case when $G$ is infinite is then
derived from the finite case by the use of an appropriate
\emph{Freiman isomorphism} of $(A,B)$ (which is an injective map
$\varphi:A\cup B\rightarrow G'$, with $G'$ an abelian group, such
that $\varphi(a_1)+\varphi(b_1)=\varphi(a_2)+\varphi(b_2)$ holds,
where $a_i\in A$ and $b_i\in B$, if and only if $a_1+b_1=a_2+b_2$).

\begin{proof}
We may assume w.l.o.g. that $0\in A\cap B$. If either $A$ or $B$ is
extendible, then (\ref{Kemp_extendible}) immediately follows.
Therefore we can assume otherwise, whence Proposition \ref{AB_dual}
implies $$-A+\overline{A+B}=\overline{B}\;\mbox{  and }
-B+\overline{A+B}=\overline{A}.$$

Suppose that $A+B$ is periodic with maximal period $H_a$. If $A$ and
$B$ are not both $H_a$-periodic, then w.l.o.g. there exists
$\alpha\in \overline{A}$ such that
$\phi_a(A)=\phi_a(A\cup\{\alpha\})$. Hence, since $A+B$ is
$H_a$-periodic, it follows that $A\cup\{\alpha\}+B=A+B$,
contradicting that $A$ is non-extendible.  Therefore we may assume
that $A$ and $B$ are both $H_a$-periodic. From Kneser's Theorem it
follows that
$$|\phi_a(A+B)|\geq |\phi_a(A)|+|\phi_a(B)|-1.$$ If equality holds
in the above inequality, then since $A$ and $B$ are both
$H_a$-periodic, it follows that
$$|A+B|=|H_a||\phi_a(A+B)|=|H_a+A|+|H_a+B|-|H_a|=|A|+|B|-|H_a|<|A|+|B|,$$
a contradiction. If $|\phi_a(A+B)|=|\phi_a(A)|+|\phi_a(B)|$, then
the proof is complete. Otherwise
$$|A+B|=|H_a||\phi_a(A+B)|\geq
|H_a+A|+|H_a+B|+|H_a|=|A|+|B|+|H_a|>|A|+|B|,$$ a contradiction once
more. So we may assume that $A+B$ is aperiodic.

Next suppose, for some $\gamma\in \overline{A+B}$, that
$A+B\cup\{\gamma\}$ is periodic with maximal period $H_a$. Note
$\phi_a(\gamma)\in \phi_a(A+B)$. Hence choosing $\alpha\in
(\gamma-B)\cap (H_a+A)$ and $\beta\in (\gamma-A)\cap
(H_a+B)$, it follows that
$A\cup\{\alpha\}+B\cup\{\beta\}=A+B\cup\{\gamma\}$, whence
(\ref{Kemp_extendible}) holds. So we may assume $\db(A+B,\mathcal{P})\geq 2$.

If $|A|=1$, then $|A+B|=|A|+|B|$ cannot hold, and if $|A|=2$, then
the theorem holds with type (V) and group $G$. So we can assume
$|A|,\,|B|\geq 3$.

It is readily checked that the theorem holding for $A_0$ and $B_0$
in Lemma \ref{kemp_Lemma_qp} implies that the theorem holds for $A$
and $B$. Thus, since the case $A+B$ periodic is complete, it follows
in view of Lemma \ref{kemp_Lemma_qp} (by considering reduced
quasi-periodic decompositions) that it suffices to prove the theorem
when neither $A$ nor $B$ is quasi-periodic. Thus we henceforth
assume this is the case. Hence, since $|A|,\,|B|\geq 3$, it follows
in view of Lemma \ref{kemp_generate_lem} that w.l.o.g. we may assume
$\langle A\rangle =\langle B\rangle=G$.

Suppose, for some distinct $\gamma_1,\gamma_2\in \overline{A+B}$,
that $A+B\cup\{\gamma_1,\gamma_2\}$ is periodic with maximal period
$H_a$. Note that $\phi_a(\gamma_i)\in \phi_a(A+B)$, for $i=1,2$,
since otherwise $A+B$ is periodic, a contradiction. Hence choosing
$\alpha_1\in (\gamma_1-B)\cap (H_a+A)$, and $\beta_1\in
(\gamma_2-A)\cap (H_a+B)$, it follows that
$A\cup\{\alpha_1\}+B\cup\{\beta_1\}=A+B\cup\{\gamma_1,\gamma_2\}$.
Since $\db(A+B,\mathcal{P})\geq 2$, it follows that either $A$ or
$B$, w.l.o.g. $A$, contains at least two $H_a$-holes. Thus we can
find $\alpha_2\in (H_a+A)\cap \overline{A\cup \{\alpha\}}$ such that
$A\cup\{\alpha_1,\alpha_2\}+B\cup\{\beta_1\}=A+B\cup\{\gamma_1,\gamma_2\}$
is maximally $H_a$-periodic. Hence from Kneser's Theorem it follows
that there are $\rho=|H_a|-1+3=|H_a|+2$ holes contained among the
sets $A$ and $B$, and that
\begin{equation}\label{kemp_4}|\phi_a(A+B)|= |\phi_a(A)|+|\phi_a(B)|-1.\end{equation}

Let
$(\phi_a(a_1),\phi_a(b_1)),(\phi_a(a_2),\phi_a(b_2)),\ldots,(\phi_a(a_l),\phi_a(b_l))
\in \phi_a(A)\times \phi_a(B)$, with $a_i\in A$ and $b_i\in B$, be
those pairs from $\phi_a(A)\times \phi_a(B)$ such that
$\phi_a(a_i+b_i)\in\{\phi_a(\gamma_1),\phi_a(\gamma_2)\}$. Since
$\gamma_i\notin A+B$, it follows in view of Proposition
\ref{mult_result} that
\begin{equation}\label{kemp_5}|A_{a_i}|+|B_{b_i}|\leq |H_a|,\end{equation} for all $i$.

Suppose $|\{\phi_a(a_i)\}_{i=1}^l|=1$. Hence in view of the
non-extendibility of $A$, it follows that $a+H_a\subseteq A$ for all
$a\in A\setminus A_{a_1}$. Thus, since $A$ is not-quasi-periodic, it
follows that $A=A_{a_1}$, whence $\langle A\rangle =G$ implies
$H_a=G$. Hence, since there are exactly $|G|-|B|$ elements
$\alpha\in G$ such that $\gamma_1\notin \alpha+B$, and since
$|A|+|B|=|G|-2$, it follows that there exists such an
$\alpha\in\overline{A}$. Likewise, since there are exactly
$|G|-|A|-1$ elements $\beta\in G$ such that $\gamma_1\notin
\beta+(A\cup \{\alpha\})$, and since $|A|+|B|=|G|-2$, it follows
that there exists such a $\beta\in \overline{B}$. Hence
$A\cup\{\alpha\}+B\cup\{\beta\}\subseteq G\setminus \gamma_1$,
whence the non-extendibility of $(A,B)$ implies equality, yielding
(\ref{Kemp_extendible}). So we can assume
$|\{\phi_a(a_i)\}_{i=1}^l|\geq 2$. By the same argument, it also
follows that $|\{\phi_a(b_i)\}_{i=1}^l|\geq 2$.

Hence in view of (\ref{kemp_5}), it follows that $\rho\geq 2|H_a|$,
with equality possible only if $|\{\phi_a(a_i)\}_{i=1}^l|=2$ and
$|\{\phi_a(b_i)\}_{i=1}^l|=2$. Thus in view of $\rho=|H_a|+2$, it
follows that $|H_a|=2$, that $|\{\phi_a(a_i)\}_{i=1}^l|=2$ and that
$|\{\phi_a(b_i)\}_{i=1}^l|=2$, implying $\phi_a(\gamma_1)\neq
\phi_a(\gamma_2)$ (else $A+B$ is periodic). There are three cases
for $l$.

Suppose $l=2$. Hence w.l.o.g.
$\phi_a(A_{a_1})+\phi_a(B_{b_1})=\phi_a(\gamma_1)$ and
$\phi_a(A_{a_2})+ \phi_a(B_{b_2})=\phi_a(\gamma_2)$ are both unique
modulo $H_a$ expression elements, whence letting $\alpha$ be the
other element from the $H_a$-coset $a_1+H_a$, and letting $\beta$ be
the other element from the $H_a$-coset $b_1+H_a$, it follows that
$A\cup\{\alpha\}+B\cup\{\beta\}=A+B\cup\{\gamma_1\}$, yielding
(\ref{Kemp_extendible}). So we can assume $l>2$

Suppose $l=3$. Hence it follows that some $\phi_a(a_i)$, say
$\phi_a(a_{j_1})$, is contained in only one pair
$(\phi_a(a_i),\phi_a(b_i))$. Likewise some $\phi_a(b_i)$, say
$\phi_a(b_{j_2})$, is also only contained in one pair
$(\phi_a(a_i),\phi_a(b_i))$. Hence, since $l=3$, it follows that
$b_{j_1}\neq b_{j_2}$ and $a_{j_1}\neq a_{j_2}$. Thus neither
$\phi_a(a_{j_2}+b_{j_2})$ nor $\phi_a(a_{j_1}+b_{j_1})$ can equal
$\phi_a(a_{j_2}+b_{j_1})\in\{\phi_a(\gamma_1),\phi_a(\gamma_2)\}$,
whence $\phi_a(a_{j_1}+b_{j_1})=\phi_a(a_{j_2}+b_{j_2})$, and
w.l.o.g. assume $\phi_a(a_{j_1}+b_{j_1})=\phi_a(\gamma_1)$. Hence,
letting $\alpha\in (\gamma_1-B)\cap (A_{a_{j_1}}+H_a)$ and letting
$\beta\in (\gamma_1-A)\cap (B_{b_{j_2}}+H_a)$, it follows that
$A\cup\{\alpha\}+B\cup \{\beta\}=A+B\cup\{\gamma_1\}$, whence
(\ref{Kemp_extendible}) holds. So we can assume $l=4$.

Let $A+B=C$. Hence, since each $C_{\gamma_i}$ is a partially filled
$H_a$-coset with $|H_a|=2$, it follows in view of $l=4$ that
$$C_{\gamma_1}\cup C_{\gamma_2}=(A_{a_1}\cup A_{a_2})+(B_{b_1}\cup B_{b_2})=b_1+(A_{a_1}\cup
A_{a_2})=a_1+(B_{b_1}\cup B_{b_2}),$$ implying from Kneser's Theorem
that $(A_{a_1}\cup A_{a_2})+(B_{b_1}\cup B_{b_2})=b_1+(A_{a_1}\cup
A_{a_2})=a_1+(B_{b_1}\cup B_{b_2})$ is periodic with maximal period
$H_{a'}$. Since $|B_{b_1}\cup B_{b_2}|=2$, it follows that
$|H_{a'}|=2$. Let $H_b=H_a\times H_{a'}\cong \Z/2\Z\times \Z/2\Z$.
Since $C_{\gamma_1}\cup C_{\gamma_2}$ is an $H_{a'}$-coset it
follows that it does not contain an $H_a$-coset. Hence
$\phi_b(\gamma_1)=\phi_b(a_1)+\phi_b(b_1)$ must be a unique
expression element in $\phi_b(A+B)$.

In view of the maximality of $H_a$, it follows that $\phi_a(A+B)$ is
aperiodic. Hence, in view of (\ref{kemp_4}), it follows that we can
apply KST to the pair $(\phi_a(A),\phi_a(B))$. Let
$\phi_{a}(A'_1)\cup \phi_{a}(A'_0)$ and $\phi_{a}(B'_1)\cup
\phi_{a}(B'_0)$ be the Kemperman decompositions with quasi-period
$H_{d}/H_{a}$, where $A=A'_1\cup A'_0$ and $B=B'_1\cup B'_0$. Since
$\phi_b(\{a_1,a_2\}+\{b_1,b_2\})$ is a unique expression element, it
follows that both elements in $\phi_{a}(\{a_1,a_2\}+\{b_1,b_2\})$
have exactly two representations in $\phi_{a}(A)+\phi_{a}(B)$ given
by
\be\label{two_rep}\phi_{a}(a_1)+\phi_{a}(b_2)=\phi_{a}(a_2)+\phi_{a}(b_1)\mbox{
and } \phi_{a}(a_1)+\phi_{a}(b_1)=\phi_{a}(a_2)+\phi_{a}(b_2).\ee

Since $\phi_{a}(A+B)$ is aperiodic, it follows that we cannot have
type (III). Suppose we have type (II). If say $a_1\in A'_1$, then
(since type (II) implies $|A_x|,\,|B_y|\geq 2$ for all $x\in A$ and
$y\in B$), it follows in view of (\ref{two_rep}) that
$\phi_d(a_1)=\phi_d(a_2)$ and that $\phi_d(b_1)=\phi_d(b_2)$.
Furthermore, if $b_i\in B'_1$, then $|H_d/H_a|=2$, and if $b_i\in
B'_0$, then $|\phi_a(B'_0)|=2$. However, since $\phi_a(B'_0)$ is an
arithmetic progression, and since $\{\phi_a(b_1),\phi_a(b_2)\}$ is
periodic with period $H_b/H_a$, it follows that $|H_d/H_a|=2$ in
this latter case as well. However, $|H_d/H_a|=2$ is impossible for
type (II) (since the order of the difference of the arithmetic
progression given by KST is at least $|A'_0|+|A'_1|-1\geq 3$).
Therefore, we can assume $a_1,a_2\in A'_0$ and $b_1,b_2\in B'_0$.
Hence, since $\phi_{a}(\{a_1,a_2\})$ is an $H_b/H_{a}$-coset with
$|H_b/H_{a}|=2$, and since $\phi_{a}(A'_0)$ is an arithmetic
progression whose difference generates the cyclic group
$\langle\phi_{a}(A'_0)-\phi_a(a_1)\rangle=\langle\phi_{a}(B'_0)-\phi_a(b_1)\rangle\leq
H_d/H_a$, it follows that
$|\phi_{a}(A'_0)|>\frac{|\langle\phi_{a}(A'_0)-\phi_a(a_1)\rangle|}{2}$.
Likewise
$|\phi_{a}(B'_0)|>\frac{|\langle\phi_{a}(B'_0)-\phi_a(b_1)\rangle|}{2}=
\frac{|\langle\phi_{a}(A'_0)-\phi_a(a_1)\rangle|}{2}$, whence
Proposition \ref{mult_result} implies that $\phi_{a}(A+B)$ is
$\langle\phi_{a}(A'_0)-\phi_a(a_1)\rangle$-periodic, a
contradiction. So type (II) cannot occur.

Suppose we have type (I) with w.l.o.g. $|\phi_{a}(A'_0)|=1$. Hence
some $a_i$, say $a_2$, is contained in $A'_1$. Thus, if some $b_i$
is contained in $B'_0$, then it follows in view of (\ref{two_rep})
that either $|\phi_{a}(B'_0)|=2$, $\phi_d(a_1)=\phi_d(a_2)$ and
$b_1,\,b_2\in B'_0$ (if $|\phi_a(B'_0)|\geq 2$), or else
$\phi_a(A'_0)=\{\phi_a(a_1)\}$ and w.l.o.g.
$\phi_a(B'_0)=\{\phi_a(b_2)\}$ (if $|\phi_a(B'_0)|=1$). In the
former case $\phi_{a}(B'_0)$, and thus $\phi_{a}(B)$, is
$H_b/H_{a}$-periodic, contradicting that $\phi_{a}(A+B)$ is
aperiodic. In the later case,
$\phi_a(a_1)+\phi_a(b_2)=\phi_a(a_2)+\phi_a(b_1)$ contradicts that
$\phi_a(A'_0)+\phi_a(B'_0)=\phi_a(a_1)+\phi_a(b_2)$ is a unique
expression element. Therefore we can assume $b_1,\,b_2\in B'_1$.
Thus, since $a_2\in A'_1$, it follows in view of (\ref{two_rep})
that $\phi_d(a_1)=\phi_d(a_2)$, $\phi_d(b_1)=\phi_d(b_2)$, and
$|H_d/H_a|=2$. Since $\phi_d(a_1)=\phi_d(a_2)$, it follows that
$H_b/H_a\leq H_d/H_a$. Hence $|H_d/H_a|=2$ implies that $H_d=H_b$.
Since $|H_d/H_a|=2$, and since $\phi_a(A+B)$ is aperiodic, it
follows that $|\phi_{a}(A'_0)|=|\phi_{a}(B'_0)|=1$. Since $H_d=H_b$,
and since $\phi_b(a_1)+\phi_b(b_1)$ is a unique expression element,
it follows that $\phi_d(a_1)+\phi_d(b_1)$ is a unique expression
element in addition to the unique expression element
$\phi_d(A'_0)+\phi_d(B'_0)$. Hence, since $a_1\notin A'_0$ and since
$b_1\notin B'_0$, then applying KST modulo $H_d=H_b$, it follows
that we must have type (II) with (by appropriate choice of sign)
both $\phi_d(a_i)$ and $\phi_d(b_i)$ the first term in their
respective arithmetic progression, and both $\phi_d(A'_0)$ and
$\phi_d(B'_0)$ the last term in their respective arithmetic
progression. Thus the theorem holds with type (VIII).

Finally, suppose we have type (IV). Hence $|H_d/H_a|\geq 6$,
$|\phi_a(A'_0)|,\,|\phi_a(B'_0)|\geq 3$, and $\phi_a(A'_0)$ is
aperiodic (all consequences for a type (IV) pair). Thus it follows,
by the same argument used in the case of type (II), that
$a_1,\,a_2\in A'_0$ and $b_1,\,b_2\in B'_0$. Hence $H_b<H_d$. Thus,
since $\phi_b(a_i)+\phi_b(b_i)$ is a unique expression element, it
follows in view of Proposition \ref{mult_result} that
\be\label{whoof}|\phi_b(A'_0+B'_0)|\geq
|\phi_b(A'_0)|+|\phi_b(B'_0)|-1.\ee In view of the description of
type (IV), it follows that $|\phi_b(A'_0)|=l_1+l_p$ and
$|\phi_b(B'_0)|=l_2+l_p$, where $l_p$ is the number of partially
filled $H_b/H_a$-cosets in $\phi_a(A'_0)$, which is also equal to
the number of partially filled $H_b/H_a$-cosets in $\phi_a(B'_0)$,
where $l_1$ is the number of $\phi_a(x)\in \phi_a(A'_0)$ with
$\phi_a(x)+H_b/H_a\subseteq \phi_a(A'_0)$, and where
$l_1+l_2+l_p=|H_d/H_b|$. Thus
$|\phi_b(A'_0)|+|\phi_b(B'_0)|-1=|H_d/H_b|+l_p-1$. Hence, since
$|\phi_b(A'_0+B'_0)|\leq |H_d/H_b|$ holds trivially, and since
$\phi_a(A'_0)$ is aperiodic, it follows in view of (\ref{whoof})
that $l_p=1$. Let $\phi_a(x)\in \phi_a(A'_0)$ and $\phi_a(y)\in
\phi_a(B'_0)$ be the elements that correspond to the unique
partially filled $H_b/H_a$-coset in $\phi_a(A'_0)$ and
$\phi_a(B'_0)$, respectively. Hence, since $\phi_a(A+B)$ is
aperiodic, and since $l_p=1$, it follows that $\phi_b(x)+\phi_b(y)$
is a unique expression element in $\phi_b(A'_0+B'_0)$. However,
since $|H_b/H_a|=2$, it follows that there is only one element from
the coset $x+H_b/H_a$ contained in $\phi_a(A'_0)$, and likewise for
the coset $y+H_b/H_a$ in $\phi_a(B'_0)$, whence
$\phi_a(x)+\phi_a(y)$ is a unique expression element, contradicting
that there are no unique expression elements in a type (IV) pair,
and completing the case when $\db(A+B,\mathcal{P})\leq 2$. So we can
assume $\db(A+B,\mathcal{P})\geq 3$.

Suppose that $|A|=|B|=3$. If $(A-A)\cap (B-B)=\{0\}$, then
$|A+B|=|A||B|=9>6=|A|+|B|$, a contradiction. Thus w.l.o.g.
$A=\{0,d,a_1\}$ and $B=\{0,d,a_2\}$. In view of Lemma
\ref{kemp_lem_AP_transfer}, it follows that we can assume neither
$A$ nor $B$ is an arithmetic progression, else the proof is
complete. Since $A$ is not quasi-periodic, it follows that no two
elements from $A$ can form a coset of an order two subgroup.
Likewise for $B$.

Note
\begin{equation}\label{kemp_equals3}A+B=\{0,d,2d,a_1,a_1+d,a_2,a_2+d,a_1+a_2\}.\end{equation}
If $a_1=a_2$, then the theorem follows with type (VI). If
$|\{0,d,2d\}|\leq 2$, then $\{0,d\}$ is a subgroup of order $2$,
which we have noted is not the case. Therefore $|\{0,d,2d\}|=3$ and
$2d\neq 0$. Since neither $A$ nor $B$ is an arithmetic progression,
it follows that $a_1,a_2\notin \{2d,-d\}$. Hence, since $a_1+d=a_2$
and $a_2+d=a_1$ together contradict that $2d\neq 0$, it follows that
$$|\{0,d,2d,a_1,a_1+d,a_2,a_2+d\}|\geq
|\{0,d,2d\}|+|\{a_1,a_1+d,a_2,a_2+d\}|\geq 6,$$ with equality
possible only if w.l.o.g. $a_2=a_1+d$. Thus in view of
(\ref{kemp_equals3}) and $|A+B|=6$, it follows that
$$a_1+a_2=2a_1+d\in A+B=\{0,d,2d,a_1,a_1+d,a_1+2d\},$$ implying that
one of the following hold: $2a_1+d=0$, $2a_1=0$, $2a_1=d$, $a_1=-d$,
$a_1=0$, or $a_1=d$. The last three equalities are contradictions.
If $2a_1=0$, then $\{0,a_1\}$ is a coset of an order two subgroup,
if $2a_1=d$, then $A=\{0,d,a_1\}=\{0,2a_1,a_1\}$ is an arithmetic
progression, and if $-2a_1=d$, then
$B=\{0,d,a_1+d\}=\{0,-2a_1,-a_1\}$ is an arithmetic progression, all
contradictions as well. So we can assume w.l.o.g. $|B|\geq 4$.

\emph{\textbf{The Case $G$ Finite.}} At this point we assume $G$ is
finite, and will handle the case $G$ infinite afterwards by a
separate argument. Consequently, in view of Lemma
\ref{kemp_lem_gen2}, it follows that neither $A+B$ nor
$\overline{A+B}$ is quasi-periodic, and that
$\langle\gamma-\overline{A+B}\rangle=G$, where
$\gamma\in\overline{A+B}$. In view of Lemma \ref{kemp_lem_punc_QP},
it follows that we can assume
$\db(A,\mathcal{QP}),\,\db(B,\mathcal{QP})\geq 2$, else the proof is
complete. In view of Lemma \ref{kemp_lem_sec_transfer}, and since
$|A|,\,|B|,\,\db(A+B,\mathcal{P})\geq 3$, it follows that we can
assume
$\db(\overline{B},\mathcal{P}),\,\db(\overline{A},\mathcal{P})\geq
3$, else the proof is complete, whence in view of Lemma
\ref{kemp_lem_punc_QP} and Proposition \ref{AB_dual}, it follows
that we can assume $\db(\overline{A+B},\mathcal{QP})\geq 2$, else
$\db(B,\mathcal{QP})=1$, a contradiction.

Suppose $|N_1^b(A,B)|\geq 2$ for some $b\in B$. If $A+(B\setminus
b)$ is periodic with maximal period $H_a$, then the
non-extendibility of $B$ implies that $B\setminus b$ is
$H_a$-periodic, whence $B$ is quasi-periodic, a contradiction. Hence
$A+(B\setminus b)$ is aperiodic, whence Kneser's Theorem implies
$|N_1^b(A,B)|=2$. Thus we can apply KST to the pair $(A,B\setminus
b)$. Since $\langle A\rangle =G$ and since $A$ is
not-quasi-periodic, it follows that the quasi-period from KST must
be $G$. Hence, since $|A|,|B\setminus b|>1$, and since
$|A+(B\setminus b)|\leq |G|-2$ (in view of $\db(A+B,\mathcal{P})\geq
3$), it follows from KST that both $A$ and $B\setminus b$ are
arithmetic progressions, whence Lemma \ref{kemp_lem_AP_transfer}
completes the proof. So we can assume $|N_1^b(A,B)|\leq 1$ for all
$b\in B$. Likewise, $|N_1^a(B,A)|\leq 1$ for all $a\in A$.

If $|A|=|\overline{A+B}|=3$, then in view of Proposition
\ref{AB_dual} and the completed case $|A|=|B|=3$, it follows that
the theorem holds with type (VII). Therefore we can assume at most
one of $|A|$, $|B|$ and $|\overline{A+B}|$ is equal to three. If
$h_d(A)\leq 1$, then $\db(A,\mathcal{QP})\geq 2$ implies that
$\db(A,\mathcal{AP})\leq 1$. Likewise for $B$. Thus in view of
Lemmas \ref{kemp_lem_AP_transfer} and \ref{kemp_lem_AAP_transfer},
it follows that $h_d(A),\,h_d(B)\geq 2$ for all nonzero $d\in G$,
else the proof is complete. Likewise, if $h_d(\overline{A+B})\leq
1$, then $\db(\overline{A+B},\mathcal{QP})\geq 2$ implies that
$\db(\overline{A+B},\mathcal{AP})\leq 1$, whence in view of
Proposition \ref{AB_dual} and Lemmas \ref{kemp_lem_AP_transfer} and
\ref{kemp_lem_AAP_transfer}, it follows that either $h_d(B)\leq 1$
for some nonzero $d\in G$, or else $\db(B,\mathcal{QP})\leq 1$, both
contradictions. Therefore we can assume $h_d(\overline{A+B})\geq 2$
for all nonzero $d\in G$ as well. Consequently,
$\db(C,\mathcal{AP})\geq 2$ for $C\in\{A,B,\overline{A+B}\}$.

Suppose $|A|=3$. Hence $|B|,\,|\overline{A+B}|\geq 4$ as noted
above. If $|N_1^b(A,B)|=0$ for all $b\in B$, then Lemma
\ref{kemp_secondlemma} implies $|B|\geq |G|-6$, whence
$|\overline{A+B}|\leq 3$, a contradiction. Therefore, since
$N_1^b(A,B)\leq 1$ for all $b\in B$, it follows that we can assume
$|N_1^b(A,B)|=1$ for some $b\in B$. We proceed by induction on
$|B|$, with our domain restricted to pairs $(A,B')$ with each member
a non-quasi-periodic, generating subset.

Since $|N_1^b(A,B)|=1$, and since $A+B$ is not quasi-periodic, it
follows that $A+(B\setminus b)=(A+B)\setminus \gamma$ is aperiodic
with $\gamma\in A+B$. Suppose (\ref{Kemp_extendible}) holds for $A$
and $B\setminus b$. In view of Corollary \ref{nec-suff-cond}, it
follows that if the extended pair $A\cup\{\alpha\}$ and $B\setminus
b\cup\{\beta\}$ is periodic without a unique expression element,
then $A\cup\{\alpha\}+(B\setminus b\cup\{\beta\})=A+(B\setminus b)$.
Thus, since $A+(B\setminus b)$ is aperiodic, it follows that we can
apply KST to $A'=A\cup \{\alpha\}$ and $B'=B\setminus
b\cup\{\beta\}$. Since $\db(A,\mathcal{QP})\geq 2$, it follows that
$\db(A',\mathcal{QP})\geq 1$. Hence, since $\langle A\rangle =G$, it
follows that KST can only hold with subgroup $G$, whence either
$|A+B|\geq |A'+B'|-1\geq |G|-2$, or else $\db(A,\mathcal{AP})\leq
1$, both contradictions. Therefore we can assume
(\ref{Kemp_extendible}) does not hold. Consequently, the pair
$(A,B\setminus b)$ must be non-extendible. Hence, in view of Lemma
\ref{kemp_lem_gen2}, it follows that $B\setminus b$ is not
quasi-periodic and that $\langle B\setminus b\rangle=G$. Thus we can
apply the induction hypothesis to the pair $(A,B\setminus b)$ and
assume (\ref{Kemp_extendible}) does not hold.

Hence, since $\langle A\rangle =G$ and since $A$ is not
quasi-periodic, it follows that $H_a=G$ in Theorem
\ref{KST_Step_Beyond}. Hence, since $|A|,\,|B\setminus b|\geq 3$,
since $|\overline{A+(B\setminus b)}|\geq 4$, and since
$\db(A,\mathcal{QP})\geq 2$, it follows that we must have type (VI)
with $H_a=G$, whence $|B|=4$ and w.l.o.g. $A=\{0,d,x\}$ and
$B=\{0,d,x,b\}$ with $N_1^b(A,B)=\{x+b\}$. Since
$\db(A,\mathcal{AP})\geq 2$, then it follows that  $x\notin
\{2d,3d,-d,-2d\}$, $d\notin\{2x,3x,-x,-2x\}$ and
$0\notin\{2x-d,3x-2d,2d-x,3d-2x\}$. Additionally, $A$ not
quasi-periodic implies $A$ does not contain a coset of an order two
subgroup. Hence the previous two sentences yield that
$$\{0,d,2d\},\; \{x,x+d\},\;\{2x\}$$ are the three distinct
$d$-components of $A+A\subseteq A+B$. Since $|A+B|=7$, it follows
that $b+A$ contains exactly one element distinct from these $6$.
Since $N_1^b(A,B)=\{x+b\}$, it follows that this element must be
$x+b$. Thus the component $\{0,d\}+b$ of $b+A$ must be contained in
either $\{0,d,2d\}$ or $\{x,x+d\}$, whence $b=0,\,d$ or $x$, all
contradictions, completing the induction. So we may w.l.o.g. assume
$|A|\geq |B|\geq 4.$

Suppose $|\overline{A+B}|\leq 3$. Hence $\db(A+B,\mathcal{P})\geq 3$
implies that $|\overline{A+B}|=3$. Thus in view of Proposition
\ref{AB_dual}, the above case, and Corollary \ref{cor1}, it follows
that $\db(A,\mathcal{QP}\cup\mathcal{AP})\leq 1$, a contradiction.
So we can also assume $|\overline{A+B}|\geq 4$.

If $e+B\subseteq A$ for all $e\in A-B$, then $A-B+B=A$, implying
from Kneser's Theorem that $A$ is periodic, a contradiction. Thus we
can choose $e\in A-B$ such that $|(e+B)\cap A|$ is maximal subject
to $|(e+B)\cap A|<|B|$. Let $B(e)=(e+B)\cap A$, and let
$A(e)=(e+B)\cup A$. Note that $|A(e)|+|B(e)|=|A|+|B|$, that
$A(e)+B(e)\subseteq e+A+B$, and that $B(e)$ is non-empty. Assume by
induction that the theorem holds for non-quasi-periodic, generating
subsets $A'$ and $B'$ either with
$\min\{|A'|,|B'|\}<\min\{|A|,|B|\}=|B|$, or else with
$\min\{|A'|,|B'|\}=\min\{|A|,|B|\}=|B|$ and $|A'|+|B'|>|A|+|B|$. We
have already verified the base of the induction when $|A|+|B|\geq
|G|-3$ or when $\min\{|A|,|B|\}\leq 3$.


\emph{Case 1:} $A(e)+B(e)$ is periodic with maximal period $H_a$.

Suppose that $B(e)$ is not $H_a$-periodic. Then $B(e)$ must have an
$H_a$-hole $x$. Hence, since $A(e)+B(e)$ is $H_a$-periodic, it
follows that
$$(A\cup
(e+B))+(B(e)\cup\{x\})=A(e)+(B(e)\cup\{x\})=A(e)+B(e)\subseteq
e+A+B.$$ Consequently, $x-e+A\subseteq A+B$ and $x+B\subseteq A+B$.
Since $x$ is an $H_a$-hole in $B(e)$, then either $x\notin A$ or
$x-e\notin B$. Hence, in view of the last two sentences, it follows
that we can contradict the non-extendibility of the pair $(A,B)$ by
either adding $x$ to $A$ (if $x\notin A$) or else by adding $x-e$ to
$B$ (if $x-e\notin B$). So we may assume that $B(e)$ is
$H_a$-periodic.

Let $\rho$ be the number of $H_a$-holes contained in the pair $A(e)$
and $B(e)$, and let $\rho'$ be the number of $H_a$-holes contained
in the pair $A$ and $B$. Partition the set $A$ into the disjoint
sets $A\cap (e+B)$, $A_1$ and $A_2$, where $A_1$ consists of those
elements of $A$ which modulo $H_a$ are contained in $\phi_a(A)\cap
\phi_a(e+B)$ but which are not in $A\cap (e+B)$, and where $A_2$ are
the remaining elements of $A$ not contained modulo $H_a$ in
$\phi_a(A)\cap \phi_a(e+B)$. Likewise partition the set $e+B=(A\cap
(e+B))\cup B_1\cup B_2$. Let $\rho''$ be the number of $H_a$-holes
contained among $A_2$ and $B_2$. Since $A\cap (e+B)=B(e)$ is
$H_a$-periodic, it follows that $\phi_a(A_1)=\phi_a(B_1)$. Hence,
since $A_1\cap B_1$ is empty, it follows that $|A_1|+|B_1|=|A_1\cup
B_1|\leq |H_a||\phi_a(A_1)|$. Hence, since $A\cap (e+B)$ is
$H_a$-periodic, it follows that
\begin{equation}\label{trans-per-1}\rho=\rho''+|H_a|\cdot |\phi_a(A_1)|-|A_1|-|B_1|\geq \rho''.
\end{equation} Applying Kneser's Theorem to $(A(e),B(e))$,
it follows that
\begin{equation}\label{trans-per-2}|A(e)+B(e)|\geq
|A|+|B|-|H_a|+\rho.
\end{equation}

Suppose $b'\in B_2$ with $|\phi_a(B_{b'}+A)\setminus
\big(\phi_a(A(e)+B(e))\big)|=t\geq 1$. Thus $\rho''\geq
|H_a|-|B_{b'}|$. Hence in view of (\ref{trans-per-2}) and
(\ref{trans-per-1}) it follows that
$$|A+B|\geq |A(e)+B(e)|+t|B_{b'}|\geq
|A|+|B|-|H_a|+\rho''+|B_{b'}|\geq |A|+|B|,$$ whence equality holds.
However, equality in the above estimate implies $t=1$ and
$e+A+B=(A(e)+B(e))\cup (B_{b'}+A)$, whence $A+B$ is quasi-periodic,
a contradiction. So we can assume $\phi_a(B_{b'}+A)\subseteq
\phi_a(A(e)+B(e))$ for all $b'\in B_2$, whence the non-extendibility
of $B$ implies $B_2$ is $H_a$-periodic (or empty). By the same
argument applied to $a'\in A_2$ it follows that
$\phi_a(A_{a'}+e+B)\subseteq \phi_a(A(e)+B(e))$ for all $a'\in A_2$,
and that $A_2$ is $H_a$-periodic (or empty). Consequently
$\rho''=0$.

Let $A_1=A_{\alpha_1}\cup\ldots A_{\alpha_n}$ and
$B_2=B_{\beta_1}\cup\ldots B_{\beta_n}$ be $H_a$-coset
decompositions of $A_1$ and $B_1$, with
$\phi_a(\alpha_i)=\phi_a(\beta_i)$. In view of the result of the
previous paragraph and Lemma \ref{kemp_lem_sec_transfer}, it follows
that $n\geq 3$, else the proof is complete.

Note that $e'\in A_{\alpha_i}-B_{\beta_i}\subseteq H_a$ are exactly
those elements such that $(e'+B_{\beta_i})\cap A_{\alpha_i}$ is
nonempty. Additionally, since $A\cap (e+B)$ is $H_a$-periodic, and
since $e'\in H_a$, it follows that $A\cap (e+B)\subseteq A\cap
(e'+e+B)$. Thus, in view of the previous two sentences, unless
$e'+e+B\subseteq A$, then the element $e'+e$ will contradict the
maximality of $e$. Hence in order to avoid this contradiction we
must have: (a) $B_2$ empty (else w.l.o.g. there will be an
$H_a$-coset $\beta+H_a$ which intersects $e'+e+B$ but not $A$), and
(b) $e'+B_{\beta_i}\subseteq A_{\alpha_i}$ for each $e'\in
A_{\alpha_i}-B_{\beta_i}$ (else w.l.o.g. there will be an element
from the coset $\alpha_i+H_a$ contained in $e'+e+B$ but not in
$A'$), and (c) $A_{\alpha_i}-B_{\beta_i}=A_{\alpha_j}-B_{\beta_j}$
for all $i$ and $j$ (else w.l.o.g. there will be an element $e'\in
A_{\alpha_i}-B_{\beta_i}$ but $e'\notin A_{\alpha_j}-B_{\beta_j}$,
whence the elements from the coset $e'+\alpha_j+H_a$ contained in
$e'+e+B$ will not be contained in $A$, but some element from the
coset $e'+\alpha_i+H_a$ contained in $e'+e+B$ will be contained in
$A$).

Since $e'+B_{\beta_i}\subseteq A_{\alpha_i}$ for each $e'\in
A_{\alpha_i}-B_{\beta_i}$, it follows that
$A_{\alpha_i}-B_{\beta_i}+B_{\beta_i}=A_{\alpha_i}$, implying that
$B_{\beta_i}-B_{\beta_i}\subseteq H(A_{\alpha_i})$, where
$A_{\alpha_i}$ is maximally $H(A_{\alpha_i})$-periodic. Hence
$A_{\alpha_i}-B_{\beta_i}=-\beta_i+A_{\alpha_i}$. Thus, since
$A_{\alpha_i}-B_{\beta_i}=A_{\alpha_j}-B_{\beta_j}$ for all $i$ and
$j$, it follows that $A_{\alpha_i}=A_{\alpha_j}+(\beta_i-\beta_j)$.
Consequently, the $A_{\alpha_i}$ are all just translates of one
another, implying that $H(A_{\alpha_i})=H(A_{\alpha_j})=H_{ka}\leq
H_a$, and that $|\phi_{ka}(B_{\beta_i})|=1$ (whence
$|B_{\beta_i}|\leq |H_{ka}|$), for all $i$ and $j$. Note $H_{ka}$
must be a proper subgroup of $H_a$, else $A_{\alpha_i}\cap
B_{\beta_i}$ would be nonempty, a contradiction. Thus, since $A_2$
is $H_a$-periodic (or empty), it follows that $A$ is
$H_{ka}$-periodic, whence $|H_{ka}|=1$. Hence
$B_{\beta_i}-B_{\beta_i}\subseteq H(A_{\alpha_i})=H_{ka}$ implies
that $|B_{\beta_i}|=1$ for all $i$.

For each partially filled $H_a$-coset $F_{i}$ in $A+B$, it follows
that there must be at least one pair $A_{\alpha_i}$ and
$B_{\beta_j}$ such that $A_{\alpha_i}+B_{\beta_j}\subseteq F_i$.
Since $A+B$ is not quasi-periodic, it follows that there are at
least two distinct partially filled $H_a$-cosets in $A+B$. In view
of the non-extendibility of $A$, it follows that each $A_{\alpha_i}$
must have a $B_{\beta_\sigma(i)}$ such that
$A_{\alpha_i}+B_{\beta_\sigma(i)}\subseteq F_j$ for some $j$.
Likewise for each $B_{b_i}$. Hence in view of Proposition
\ref{matching_prop} and $n\geq 3$, it follows that there exist
distinct $i$ and $i'$ and distinct $j$ and $j'$ such that
$A_{\alpha_i}+B_{\beta_j}$ and $A_{\alpha_{i'}}+B_{\beta_{j'}}$ are
each disjoint from $A(e)+B(e)$ and $\phi_a(\alpha_i+\beta_j)\neq
\phi_a(\alpha_{i'}+\beta_{j'})$. Hence in view of
(\ref{trans-per-2}) it follows that
\begin{equation}\label{kemp-trans-5}|A+B|\geq
|A(e)+B(e)|+|A_{\alpha_i}+B_{\beta_j}|+|A_{\alpha_{i'}}+B_{\beta_{j'}}|\geq
|A|+|B|+\rho+|A_{\alpha_i}|+|A_{\alpha_{i'}}|-|H_a|,\end{equation}
with equality possible only if there are exactly two partially
filled $H_a$-cosets in $A+B$.

Since $|B_{\beta_i}|=1$ for all $i$, and since $\rho''=0$, it
follows in view of (\ref{trans-per-1}) that
$\rho=n(|H_a|-1)-\Sum{i=1}{n}|A_{\alpha_i}|\geq
2(|H_a|-1)-|A_{\alpha_{i'}}|-|A_{\alpha_i}|$. Thus
(\ref{kemp-trans-5}) implies $|A+B|\geq |A|+|B|+|H_a|-2$, whence
$|H_a|=2$ and equality must hold in (\ref{kemp-trans-5}). Hence
there are exactly two partially filled $H_a$-cosets in $A+B$. Thus,
since $|H_a|=2$ implies each partially filled $H_a$-coset contains
one hole, it follows that $\db(A+B,\mathcal{P}_{H_a})\leq 2$,
contradicting that $\db(A+B,\mathcal{P})\geq 3$, and completing the
proof. So we may assume $A(e)+B(e)$ is aperiodic, whence it follows
in view of Kneser's Theorem that either
$|A(e)+B(e)|=|A(e)|+|B(e)|-1$ or $|A(e)+B(e)|=|A(e)|+|B(e)|$. We
proceed based on $|B(e)|$.

\emph{Case 2:} $|B(e)|\geq 3$.

Suppose $|A(e)+B(e)|=|A(e)|+|B(e)|-1$. Thus we can apply KST to
$A(e)+B(e)=(e+A+B)\setminus \{\gamma\}$ with $\gamma\in e+A+B$.
Hence, since $\langle \gamma'-\overline{A+B}\rangle=G$, for
$\gamma'\in\overline{A+B}$, and since
$\db(\overline{A+B},\mathcal{QP})\geq 2$, it follows that the
quasi-period from KST must be $G$. Hence from KST it follows that
either $\db(\overline{A+B},\mathcal{AP})\leq 1$ or else $|A+B|\geq
|G|$, both contradictions. So we can assume $A(e)+B(e)=e+A+B$.

Suppose $(A(e),B(e))$ is extendible. Hence we can apply KST to
$A(e)\cup\{\alpha\}+B(e)\cup\{\beta\}=e+A+B$. As in the previous
paragraph, the quasi-period from KST must be $G$, whence either
$\db(\overline{A+B},\mathcal{AP})\leq 0$ or else $|A+B|\geq |G|-1$,
both contradictions. So we can assume $(A(e),B(e))$ is
non-extendible. Thus, since $\overline{A+B}$ is not quasi-periodic,
and since $\langle\gamma-\overline{A+B}\rangle=G$, it follows in
view of Proposition \ref{AB_dual} and Lemma \ref{kemp_lem_gen2} that
$A(e)$ and $B(e)$ are both non-quasi-periodic, generating subsets,
whence the theorem holds for $A(e)$ and $B(e)$ by induction
hypothesis. Hence in view of Corollary \ref{cor1} it follows that
$\db(\overline{A+B},\mathcal{QP}\cup\mathcal{AP})\leq 1$, a
contradiction.

\emph{Case 3:} $|B(e)|=1$.

Let $T$ be the subset of $A-B$ such that $T+B\subseteq A$. Thus we
can apply Theorem \ref{AB_large_from_A-B} with $k=1$, whence Theorem
\ref{AB_large_from_A-B}(ii) implies \be\label{T_inequality} |T|\geq
|A|\frac{|A||B|-|A|-|B|}{(|A|+|B|)(|B|-1)}.\ee If $|T|\leq 1$, then
(\ref{T_inequality}) implies
$(|B|-1)|A|^2-(2|B|-1)|A|-|B|(|B|-1)\leq 0$. This is an increasing
function of $|A|$ for $|A|>\frac{2|B|-1}{2|B|-2}$, whence $|A|\geq
|B|\geq 4$ implies $$0\geq
(|B|-1)|B|^2-(2|B|-1)|B|-|B|(|B|-1)=|B|(|B|^2-4|B|+2),$$
contradicting that $|B|\geq 4$. Therefore we can assume $|T|\geq 2$.
If $|T|\leq |A|-|B|-2$, then (\ref{T_inequality}) implies, in view
of $|B|\geq 4$, that $|A|\leq \frac{2|B|-|B|^3-|B|^2}{|B|-2}<0$, a
contradiction. Therefore we can assume \be \label{T2-bound} |T|\geq
|A|-|B|-1.\ee

Let $A'=A\setminus (T+B)$. Hence $A=A'\cup (T+B)$ and $|T+B|=
|A|-|A'|$. Let $a'\in A'$ and $a\in A$. If $(a'+B)\cap (a+B)$ is
nonempty, then $a'+b'=a+b$ for some $b,\,b'\in B$. Hence $a'\in
(a'-b)+B$ and $a=a'-b+b'\in (a'-b)+B$. Hence, if $a'$ and $a$ are
distinct, then $|(a'-b+B)\cap A|\geq 2$, whence $a'-b+B\subseteq A$.
Hence $a'-b\in T$ and $a'\in T+B$, a contradiction. Therefore every
element $a'+b$, for $a'\in A'$ and $b\in B$, is a unique expression
element in $A+B$. Hence $$|A+B|\geq |A'||B|+|(A\setminus A')+B|\geq
|A|+|A'|(|B|-1)= |A|+|B|+(|A'|-1)(|B|-1)-1.$$ Thus in view of
$|A+B|=|A|+|B|$ and $|B|\geq 3$, it follows that $|A'|\leq 1$. Thus
since $\db(A,\mathcal{QP})\geq 2$, it follows that $T+B$ is
aperiodic.

Suppose $|\nu_b(B,-B)|\geq 2$ for some nonzero $b$. Hence
$|(b+t+B)\cap (t+B)|\geq 2$ for $t\in T$. Thus, since $t+B\subseteq
A$, it follows that $|(b+t+B)\cap A|\geq 2$, whence
$(b+t+B)\subseteq A$. Consequently, $b+T+B\subseteq A$, whence the
definition of $T$ implies that $\{0,b\}+T=T$. Thus, since $b$ is
nonzero, it follows in view of Kneser's Theorem that $T$ is
periodic, contradicting that $T+B$ is aperiodic. So we can assume
$|\nu_b(B,-B)|\leq 1$ for all nonzero $b$. Consequently, $B$ is a
Sidon set and $|2B|=\frac{|B|(|B|+1)}{2}$.

Since $|A'|\leq 1$, it follows that $T+B=A\setminus \alpha$ for some
$\alpha\in G$, whence $T+2B=(A\setminus \alpha)+B$. Hence, since
$|N_1^a(B,A)|\leq 1$ for all $a\in A$ (shown in the second paragraph
of the case $G$ finite), it follows that $T+2B=(A+B)\setminus
\gamma$ for some $\gamma\in G$. Thus $A+B$ not quasi-periodic
implies that $T+2B$ is aperiodic.

If the inequality in (\ref{T2-bound}) is strict, then since $T+2B$
is aperiodic, it follows from Kneser's Theorem that $|A|+|B|\geq
|T+2B|\geq |T|+|2B|-1=|A|-|B|+\frac{|B|(|B|+1)}{2}-1$, implying
$|B|<4$, a contradiction. Therefore we can assume $|T|=|A|-|B|-1$.
Hence (\ref{T_inequality}) implies that \be\label{ticktock}|A|\geq
|B|(|B|-1)(|B|+1).\ee Note (where $M$ is as defined in Theorem
\ref{AB_large_from_A-B}) that
\be\label{ticktock2}-\frac{M}{|B|}=-|T|(|B|-1)=(|B|-|A|+1)(|B|-1)\equiv
(|B|+1)(|B|-1)\mod |A|.\ee In view of (\ref{ticktock}) it follows
that $1\leq (|B|+1)(|B|-1)\leq |A|$. Hence in view of
(\ref{ticktock2}) it follows that $x=(|B|+1)(|B|-1)|B|$ in Theorem
\ref{AB_large_from_A-B}. Hence Proposition
\ref{AB_large_from_A-B}(iii) and $|A+B|=|A|+|B|$ imply
$$|A|+|B|=|A+B|\geq
\frac{|A|^2|B|^2(|A||B|^2-|A||B|+|B|^3-|B|)}{|A||B|^2(|A||B|^2-|A||B|)}=
\frac{|A||B|-|A|+|B|^2-1}{|B|-1},$$ which implies $|B|\leq 1$, a
contradiction.

\emph{Case 4:} $|B(e)|=2$.

If $|A(e)+B(e)|=|A(e)|+|B(e)|-1$, then the arguments from the
analogous part of Case 2 complete the proof.  Therefore we can
assume $|A(e)+B(e)|=|A(e)|+|B(e)|$, $A(e)+B(e)=e+A+B$ and
$c_d(A(e))=2$, for $d$ equal to the difference of elements in
$B(e)$. Thus $A(e)+B(e)=e+A+B$ implies that
$c_d(A+B)=c_d(\overline{A+B})\leq c_d(A(e))=2$. If
$c_d(\overline{A+B})=1$, then $\overline{A+B}$ not quasi-periodic
implies $\overline{A+B}$ is an arithmetic progression, a
contradiction. Therefore $c_d(\overline{A+B})=2$. Thus, in view of
Lemma \ref{kemp_lem_2-comp-transfer}, $\db(B,\mathcal{QP}\cup
\mathcal{AP})\geq 2$, and Proposition \ref{AB_dual}, it follows that
$c_d(A),c_d(B)\leq 2$. Furthermore, we must have $c_d(A)=c_d(B)=2$
by the same reasoning used to establish this for $\overline{A+B}$.

Since $\db(A+B,\mathcal{P})\geq 3$ and $c_d(A+B)=2$, it follows that
$\db(A+B+\{0,d\},\mathcal{P})\geq 1$.  Suppose
$\db(A+B+\{0,d\},\mathcal{P})=1$. Thus by the arguments used to
establish the theorem when $\db(A+B,\mathcal{P})=1$, it follows that
(\ref{Kemp_extendible}) holds for $A$ and $B+\{0,d\}$ with
$A\cup\{\alpha\}+(B+\{0,d\})\cup\{\beta\}$ periodic with maximal
period $H_a$. If $A\cup\{\alpha\}+(B+\{0,d\})\cup\{\beta\}$ contains
no unique expression element, then Corollary \ref{nec-suff-cond}
implies that $A\cup\{\alpha\}+(B+\{0,d\})\cup\{\beta\}=A+B+\{0,d\}$.
Hence, since $\db(A+B+\{0,d\},\mathcal{P})\geq 1$, it follows that
we can assume this does not happen, whence we can apply KST to
$A\cup\{\alpha\}+(B+\{0,d\})\cup\{\beta\}$. Since
$\db(A,\mathcal{QP})\geq 2$, and since $\langle A\rangle =G$, it
follows that the quasi-period from KST must be $G$, whence
$A\cup\{\alpha\}+(B+\{0,d\})\cup\{\beta\}$ periodic and $c_d(A+B)=2$
imply that $|\overline{A+B}|\leq 3$, a contradiction. So we can
assume $\db(A+B+\{0,d\},\mathcal{P})\geq 2$.

If the pair $(A,B+\{0,d\})$ is extendible, then since
$\db(A,\mathcal{QP})\geq 2$, since $\langle A\rangle =G$, and since
$\db(A+B+\{0,d\},\mathcal{P})\geq 2$, it follows in view of KST that
$\db(A,\mathcal{AP})\leq 1$, a contradiction. Therefore we can
assume $(A,B+\{0,d\})$ is non-extendible. Thus, since $A$ is not
quasi-periodic and since $\langle A\rangle =G$, it follows in view
of Lemma \ref{kemp_Lemma_qp} that $B+\{0,d\}$ is not quasi-periodic.
Also, $\langle B+\{0,d\}\rangle \geq \langle B\rangle =G$ implies
$\langle B+\{0,d\}\rangle =G$. Thus we can apply the induction
hypothesis to the pair $(A,B+\{0,d\})$. Since $|A|,\,|B+\{0,d\}|\geq
4$, with neither $A$ nor $B+\{0,d\}$ quasi-periodic, it follows that
we cannot have type (V-VII). Since $\db(A,\mathcal{QP})\geq 2$, it
follows that we cannot have type (VIII). Thus
(\ref{Kemp_extendible}) holds for $A$ and $B+\{0,d\}$. Since
$\langle A\rangle =G$, and since $\db(A,\mathcal{QP})\geq 2$, it
follows that the quasi-period from KST must be $G$. Hence, since
$\db(A,\mathcal{AP})\geq 2$, and since $c_d(A+B)=2$, it follows in
view of KST that $|\overline{A+B}|\leq 4$, whence
$|\overline{A+B}|=4$.

Suppose $|B|\geq 5$. Hence we can apply the induction hypothesis to
$(-A,\overline{A+B})$. Thus, since $\db(A,\mathcal{QP})\geq 2$, it
follows in view of Corollary \ref{cor1} that
$\db(-A,\mathcal{AP})=\db(A,\mathcal{AP})\leq 1$, a contradiction.
So we can assume $|B|=|\overline{A+B}|=4$. Note that if the theorem
holds for $(-B,\overline{A+B})$, then in view of
$\db(A,\mathcal{QP})\geq 2$ it follows from Corollary \ref{cor1}
that $\db(-B,\mathcal{AP})=\db(B,\mathcal{AP})\leq 1$, a
contradiction. Consequently, it follows that case $G$ finite will be
complete once we complete the case with $|A|=|B|=4$ and
$c_d(A)=c_d(B)=c_d(A+B)=2$. We proceed to do so.

Suppose $|N_1^b(A,B)|>0$ for some $b\in B$. Hence in view of
$|N_1^b(A,B)|\leq 1$, it follows that $|N_1^b(A,B)|=1$. If the pair
$(A,B\setminus b)$ is extendible, then the theorem holds for the
pair $(A,B\setminus b)$. Otherwise, since $A$ is not quasi-periodic,
since $\langle A\rangle =G$, and since $A+B$ is not quasi-periodic
(implying $A+(B\setminus b)$ is aperiodic), it follows from Lemma
\ref{kemp_lem_gen2} that $B\setminus b$ is not quasi-periodic and
$\langle B\setminus b\rangle =G$, whence the theorem holds for the
pair $(A,B\setminus b)$ by induction hypothesis.  In the latter
case, Corollary \ref{cor1} implies $\db(A,\mathcal{AP}\cup
\mathcal{QP})\leq 1$, a contradiction. In the former case, we can
apply KST to $A\cup \{\alpha\}$ and $B\cup\{\beta\}$. Since
$\db(A,\mathcal{QP})\geq 2$, and since $\langle A\rangle =G$, it
follows that the quasi-period must be $G$, whence
$|\overline{A+B}|\geq 4$ likewise implies that
$\db(A,\mathcal{AP})\leq 1$, again a contradiction.  So we can
assume there are no unique expression elements in $A+B$.

Let $A_1$ and $A_2$ be the two $d$-components of $A$, and let $B_1$
and $B_2$ be the two $d$-components of $B$. Suppose for some $i$,
say $i=1$, that $|A_1|\geq 3$. Hence in view of $h_d(B)\geq 2$, it
follows that $|A_1+B|=|B|+4=|B|+|A|$. Thus $A_1+B=A+B$. Since
$c_d(A+B)=2$, this implies that $A_1+B_1$ and $A_1+B_2$ are distinct
components in $A+B$, and that each of the four end terms of
components in $A_1+B$ is a unique expression element. We may assume
w.l.o.g. that $|B_1|\geq |B_2|$. Thus the component $A_1+B_1$ is
longer than either of the components $A_2+B_1$ and $A_2+B_2$. Hence,
since $A+B$ contains no unique expression element, and since
$A_1+B=A+B$, it follows that the only way the two end terms of
$A_1+B_1$ will not be unique expression elements in $A+B$ is if
$A_2+B\subseteq A_1+B_1$. Thus both the end terms of $A_1+B_2$ are
unique expression elements, a contradiction. So we can assume
$|A_i|=2$ for $i=1,2$. Likewise $|B_i|=2$ for $i=1,2$.

Since $h_d(B)\geq 2$, it follows that $A_1+B_1$ and $A_1+B_2$ are
distinct components in $A_1+B$, and that all four of the end terms
are unique expression elements in $A_1+B$. Since
$|A_i+B_j|=|A_{i'}+B_{j'}|$ for all $i,i',j,j'$, and since
$c_d(A+B)=2$, it follows that only way that these four terms can all
not be unique expression elements in $A+B$ is if $A_2+B_1=A_1+B_2$
and $A_2+B_2=A_1+B_1$. Thus $A_1+B=A_2+B$, implying
$|A+B|=|B|+2<|A|+|B|$, a contradiction. Consequently, we conclude
that Theorem \ref{KST_Step_Beyond} holds for $G$ finite.

\emph{\textbf{The Case $G$ Infinite.}} Assume $G$ is infinite. Since
$A$ and $B$ are finite, we may w.l.o.g. assume $G$ is finitely
generated. Hence $G\cong \Z^l\times T$, where $T$ is the torsion
subgroup of $G$. By translation, we can assume all non-torsion
coordinates for all $a\in A$ and $b\in B$ are non-negative. Let $M$
be the maximum integer that occurs in a non-torsion coordinate of
the $a\in A$ and $b\in B$. Let $p$ be a prime such that
$p>4(M+|T|+|A||B|)$. Let $\varphi:G\cong \Z^l\times T\rightarrow
(\Z/p\Z)^l\times T$ be the map defined by reducing all non-torsion
coordinates modulo $p$. Since $p>4(M+|T|+|A||B|)\geq 2M$, it follows
that $\varphi$ is a Freiman isomorphism of $(A+T,B+T)$ (see the
definition given before the start of the proof of Theorem
\ref{KST_Step_Beyond}), and thus also of $(A,B)$. Hence
$|\varphi(A+B)|=|\varphi(A)|+|\varphi(B)|=|A|+|B|$. Thus, since
$\varphi(G)$ is finite, it follows that we can apply Theorem
\ref{KST_Step_Beyond} to $\varphi(A)$ and $\varphi(B)$.

If $\db(\varphi(A+B),\mathcal{P}_{H_a})\leq 2$, for some nontrivial
subgroup $H_a\leq \varphi(G)$, then $$|A|+|B|=|\varphi(A+B)|\geq
|H_a|-2.$$ Hence $p>4(M+|T|+|A||B|)\geq 4(|A||B|+1)$ implies
$H_a\leq T$ (since any element outside $T$ has a coordinate with
order at least $p$, and thus is itself of order at least $p$). Hence
from the definition of $\varphi$ it follows that
$\db(A+B,\mathcal{P}_{H_a})\leq 2$, a contradiction. Therefore we
can assume $\db(\varphi(A+B),\mathcal{P})\geq 3$. Since $\langle
A\rangle =G$, it follows that $\langle\varphi(A)\rangle
=\varphi(G)$. Likewise $\langle \varphi(B)\rangle =\varphi(G)$.

Suppose that $\db(\varphi(A),\mathcal{QP}_{H_a})=0$ for some
nontrivial subgroup $H_a\leq \varphi(G)$. Hence
$|\varphi(A)|=|A|\geq |H_a|$. Thus, since $p>4(M+|T|+|A||B|)\geq
4(|A||B|+1)$, it follows that $H_a\leq T\leq G$. Hence
$\db(\varphi(A),\mathcal{QP}_{H_a})=0$ implies (in view of the
definition of $\varphi$) that $A$ is quasi-periodic with
quasi-period $H_a\leq G$, a contradiction. So we can assume
$\db(\varphi(A),\mathcal{QP})\geq 1$. Likewise
$\db(\varphi(B),\mathcal{QP})\geq 1$.

Suppose
\be\label{hihoooo}\db(\varphi(A),\mathcal{AP}_d),\,\db(\varphi(B),\mathcal{AP}_d)\leq
1,\ee for some nonzero $d\in \varphi(G)$. Consequently,
$\varphi(G)=\langle\varphi(A)\rangle$ is cyclic, implying $G\cong
\Z\times T$ with $T$ cyclic. For $x\in \varphi(G)$, let
$\overline{x}$ denote the least non-negative integer representative
of the integer coordinate of $x$.

If $d\in T$, then it follows that $\varphi(A)\subseteq T$, whence
$A\subseteq T$, contradicting that $\langle A\rangle =G$ is
infinite. Therefore we can assume $d\notin T$. Hence, by considering
$-d$ if needed, it follows that $1\leq \overline{d}\leq
\frac{p-1}{2}$. Since $\db(\varphi(A),\mathcal{AP}_d)\leq 1$, let
$P=\{p_0,p_0+d,\ldots, p_0+|A|d\}$ be an arithmetic progression with
difference $d$ that contains $\varphi(A)$.

Suppose $\overline{p_0+id}=\overline{p_0}+i\overline{d}$ for all $i$
does not hold. Hence, if $d\leq M$, then $p>4(M+|T|+|A||B|)>4M$
implies that $\{\overline{p_0},\overline{p_0+d},\ldots,
\overline{p_0+|A|d}\}$ must contain at least two elements from the
interval $(M,p)$. Hence $P$ contains at least two elements from
$\overline{A}$, contradicting the $|P\setminus A|\leq 1$. Otherwise,
$d\leq \frac{p-1}{2}$ and $M\leq \frac{p-1}{2}$ imply that
$\{\overline{p_0},\overline{p_0+d},\ldots, \overline{p_0+|A|d}\}$
contains at least $|A|-1$ elements from the interval $(M,p)$, whence
$|P\setminus A|\leq 1$ implies $|A|\leq 2$, a contradiction. So we
may assume $\overline{p_0+id}=\overline{p_0}+i\overline{d}$ for all
$i$.

Hence $A$ is contained in an arithmetic progression of difference
$(\overline{d},t)$ and at most one hole, where $t$ is the torsion
coordinate of $d$. By the same argument applied to $B$, it follows
that $B$ is also contained in an arithmetic progression of
difference $(\overline{d},t)$ with at most one hole. Hence letting
$\alpha$ be the hole in $A$, and letting $\beta$ be the hole in $B$,
it follows in view of $|A+B|=|A|+|B|$ that (\ref{Kemp_extendible})
holds, completing the proof. So we may assume that (\ref{hihoooo})
does not hold.

Suppose that $(\varphi(A),\varphi(B))$ is extendible. Hence w.l.o.g.
there exists $\alpha\in \overline{\varphi(A)}$ such that
$\varphi(A)\cup \{\alpha\}+\varphi(B)=\varphi(A)+\varphi(B)$. Thus
we can apply KST to $(\varphi(A)\cup \{\alpha\},\varphi(B))$. Since
$\db(\varphi(B),\mathcal{QP})\geq 1$, and since $\langle
\varphi(B)\rangle=\varphi(G)$, it follows that the quasi-period from
KST must be $\varphi(G)$. Hence, since
$\db(\varphi(A+B),\mathcal{P})\geq 3$, it follows from KST that
(\ref{hihoooo}) holds, a contradiction. So we can assume
$(\varphi(A),\varphi(B))$ is non-extendible.

Since $(\varphi(A), \varphi(B))$ is non-extendible, since
$|\overline{\varphi(A+B)}|,\,|\varphi(B)|\geq 4$, and since
(\ref{hihoooo}) does not hold, it follows in view of Corollary
\ref{cor1} that $\db(\varphi(A),\mathcal{QP}_{H_a})=1$ for some
nontrivial subgroup $H_a\leq \varphi(G)$. Hence
$|\varphi(A)|=|A|\geq |H_a|-1$. Thus, since $p>4(M+|T|+|A||B|)\geq
4(|A||B|+1)$, it follows that $H_a\leq T\leq G$. Observe that we
have verified all the hypotheses needed to apply Lemma
\ref{kemp_lem_punc_QP} to $(\varphi(A),\varphi(B))$. Hence Lemma
\ref{kemp_lem_punc_QP} implies that (\ref{Kemp_extendible}) holds
for $(\varphi(A), \varphi(B))$. Thus, since $\varphi$ is a Freiman
isomorphism for $(A+T,B+T)$, since $H_a\leq T$, and since the proof
of Lemma \ref{kemp_lem_punc_QP} shows $\alpha\in \varphi(A)+H_a$ and
$\beta\in \varphi(B)+H_a$, it follows that (\ref{Kemp_extendible})
holds for $(A,B)$, completing the proof.
\end{proof}

\textbf{Acknowledgements:} I wish to thank Oriol Serra for some
helpful comments regarding the manuscript.

\begin{small}

\end{small}


\begin{thebibliography}{99}
\bibitem{Rectification-paper} Y. F. Bilu, V. F. Lev, and I. Z. Ruzsa,
Rectification principles in additive number theory, Dedicated to the
memory of Paul Erd\H{o}s, \emph{Discrete Comput. Geom.},  19 (1998),
no. 3, Special Issue, 343--353.

\bibitem{chowla} S. Chowla, A theorem on the addition of residue classes:
applications to the number $\Gamma(k)$ in Waring's problem,
\emph{Proc. Indian Acad. Sc.}, 2 (1935), 242–243.

\bibitem{step-beyond-kneser} J. Deshouillers and G. A. Freiman,  A step beyond
Kneser's theorem for abelian finite groups, \emph{Proc. London Math.
Soc.}, (3)  86  (2003),  no. 1, 1--28.

\bibitem{Freiman-vosper-I} G. A. Freiman, Inverse problems of additive
number theory, On the addition of sets of residues with respect to a
prime modulus, \emph{Doklady Akad. Nauk SSSR}, 141 (1961), 571--573.

\bibitem{Freiman-vosper-II} G. A. Freiman, Inverse problems of additive
number theory, On the addition of sets of residues with respect to a
prime modulus, \emph{Soviet Math.-Doklady}, 2 (1961), 1520--1522.

\bibitem{Freiman-Theorem-by-Freiman}G. A. Freiman, \emph{Foundations of a structural theory of set
addition}, translated from the Russian, Translations of Mathematical
Monographs, Vol 37, American Mathematical Society, Providence, RI,
1973.

\bibitem{Gowers-szemerdiproof}W. T. Gowers, A new proof of Szemer\'{e}di's theorem for
arithmetic progressions of length four, \emph{Geom. Funct. Anal.}, 8
(1998), no. 3, 529-–551.

\bibitem{EGZ-hyp} D. J. Grynkiewicz, An extension of the
Erd\H{o}s-Ginzburg-Ziv Theorem to hypergraphs, \emph{European J.
Combin.},  26  (2005),  no. 8, 1154--1176.

\bibitem{Quasi-periodic-interpret-KST} D. J. Grynkiewicz,
Quasi-periodic decompositions and the Kemperman structure theorem,
\emph{European J. Combin.},  26  (2005),  no. 5, 559--575.

\bibitem{PhD-Dissertation} D. J. Grynkiewicz, \emph{Sumsets,
zero-sums and extremal combinatorics}, Ph.D. Dissertation, Caltech
(2006).

\bibitem{ham-ser-chowla} Y. O. Hamdioune, O. Serra, and G. Zemor, Beyond Chowla's
Theorem, preprint.

\bibitem{vosp+2} Y. O. Hamdioune, O. Serra, and G. Zemor, On the critical pair theory in $\Z/p\Z$,
\emph{Acta Arith.}, 121  (2006), no. 2.

\bibitem{ham-rodseth} Y. O. Hamidoune and J. O. R{\o}dseth,  An inverse theorem mod
$p$, \emph{Acta Arith.},  92  (2000),  no. 3, 251--262.

\bibitem{ham2-recursive-descrip}Y. O. Hamidoune, Subsets with a small sum II:
The critical pair problem, \emph{European J. Combin.}  21  (2000),
no. 2, 231--239.

\bibitem{ham3-vosper-prop} Y. O. Hamidoune, Subsets with small sums in abelian groups
I: The Vosper property,  \emph{European J. Combin.}  18  (1997), no.
5, 541--556.

\bibitem{ham-kst-vosper-graph-paper}Y. O. Hamidoune, A. S. Llad\'{o}, and O. Serra,  Vosperian and
superconnected abelian Cayley digraphs, \emph{Graphs Combin.}, 7
(1991), no. 2, 143–-152.

\bibitem{EHC-extremal-cases} G. K\'{a}rolyi, An inverse theorem for the
restricted set addition in abelian groups,  \emph{J. Algebra},  290
(2005), no. 2, 557--593.

\bibitem{EHC-chowla-karol} G. K\'{a}rolyi, The Erd\"{o}s-Heilbronn problem in
abelian groups, \emph{Israel J. Math.},  139  (2004), 349--359.

\bibitem{kst} J. H. B. Kemperman, On small sumsets in an abelian group,  \emph{Acta
Math.}, 103 (1960), 63--88.

\bibitem{kneser-analytic-localcompact}
M. Kneser, Summenmengen in lokalkompakten abelschen gruppen,
\emph{Math. Z.}, 66 (1956), 88--110.

\bibitem{kt}
M. Kneser, Ein satz \"{u}ber abelsche gruppen mit anwendungen auf
die geometrie der zahlen, \emph{Math. Z.}, 64 (1955), 429--434.

\bibitem{kt-asymptotic}
M. Kneser, Absch\"{a}tzung der asymptotischen dichte von
summenmengen, \emph{Math. Z.}, 58 (1953), 459--484.

\bibitem{lev}
V. Lev, On small sumsets in abelian groups, \emph{Structure theory
of set addition. Ast´erisque}, No. 258 (1999), xv, 317–321.

\bibitem{levnewkemp} V. Lev, Critical Pairs in abelian groups and
Kemperman's Structure Theorem, preprint.

\bibitem{natbook}
M. B. Nathanson, \emph{Additive Number Theory: Inverse Problems and
the Geometry of Sumsets}, Graduate Texts in Mathematics, 165,
Springer-Verlag, New York, 1996.

\bibitem{2-component-prime-case-oriol-zemor} O. Serra and G. Z\'{e}mor,
On a generalization of a theorem by Vosper, \emph{Integers} (2000),
electronic.

\bibitem{serro-smallsumset-survery} O. Serra, An isoperimetric method for
the small sumset problem, Surveys in combinatorics 2005,
\emph{London Math. Soc. Lecture Note Ser.}, 327, Cambridge Univ.
Press, Cambridge (2005), 119--152.


\bibitem{szemeredi} E. Szemer\'{e}di, On sets of integers containing no four elements in
arithmetic progression, \emph{Acta Math. Acad. Sci. Hungar.}, 20
(1969), 89-–104.

\bibitem{Vosper-thm} A. G. Vosper, The critical pairs of subsets of a group of prime
order, \emph{J. London Math. Soc.},  31  (1956), 200--205.

\bibitem{vosper-addend} A. G. Vosper, Addendum to ``The critical pairs of subsets of a
group of prime order,''  \emph{J. London Math. Soc.},  31  (1956),
280--282.

\end{thebibliography}
\end{document}